\newtheorem{prop}{Proposition}[section]
\newtheorem{lemma}[prop]{Lemma}
\newtheorem{corollary}[prop]{Corollary}
\newtheorem{theorem}[prop]{Theorem}
\newtheorem*{theorem_A}{Theorem A}
\newtheorem*{theorem_B}{Theorem B}
\newtheorem{definition}[prop]{Definition}
\newtheorem{remark}[prop]{Remark}
\newtheorem{example}{Example}
\newtheorem{problem}{Open Problem}
\newtheorem{conjecture}{Conjecture}
\newcommand{\cupdot}{\mathbin{\mathaccent\cdot\cup}}
    \def\PSL{{\rm PSL}}
        \def\PGL{{\rm PGL}}
        \def\PSU{{\rm PSU}}
        \def\Sz{{\rm Sz}}
        \def\SL{{\rm SL}}
        \def\SU{{\rm SU}}
        \def\Sp{{\rm Sp}}
        \def\PSp{{\rm PSp}}
        \def\POm{{\rm P}\Omega}
        \def\Diag{{\rm Diag}}
        \def\GL{{\rm GL}}
        \def\F{\mathbb{F}}
        \def\PP{\mathcal{P}}
        \def\Cyc{{\rm Cyc}}
\newcommand{\set}[1]{\left\{#1\right\}}               
\newcommand{\seq}[1]{\left<#1\right>}                 
\newcommand{\D}[1]{\textcolor{red}{#1}}
\newcommand{\Fr}[1]{\textcolor{purple}{#1}}
\newcommand{\coloneq}{\mathrel{\mathop:}=}
\begin{document}
\title[Enhanced power graphs]{Enhanced power graphs of finite groups with cograph structure}

\author{Daniela Bubboloni} 
 \address{Dipartimento di Matematica  e
      Informatica 'Ulisse Dini', 
		Viale Morgagni 67/A, 50134 Firenze, 
      Italy}
 \email{daniela.bubboloni@unifi.it}

\author{Francesco Fumagalli} 
 \address{Dipartimento di Matematica  e
      Informatica 'Ulisse Dini', 
		Viale Morgagni 67/A, 50134 Firenze, 
      Italy}
	\email{francesco.fumagalli@unifi.it}
	
\author{Cheryl E. Praeger}
 \address{Centre for the Mathematics of Symmetry and Computation, University of Western Australia, Perth 6009, Australia}
 \email{Cheryl.Praeger@uwa.edu.au}

\keywords{enhanced power graph, power graph,  finite simple groups, cograph, chordal graph.}
\subjclass[2020]{05C25, 20D05}
\thanks{The first two authors are partially supported by the Italian INdAM-GNSAGA. The third author acknowledges the hospitality of DIMAI and financial support from GNSAGA which led to this  fruitful collaboration.
This work is also funded by the national project PRIN 2022- 2022PSTWLB - Group Theory and Applications - CUP B53D23009410006. 
}

\begin{abstract} 
The enhanced power graph, 
$\mathcal{E}(G)$, of a group 
$G$ has vertex set $G$ 
and two elements are adjacent 
if they generate a cyclic 
subgroup. In the case of  
finite groups, we 
identify some striking and 
unexpected properties of 
these graphs,  as well as 
links between properties of 
$\mathcal{E}(G)$ and 
properties of the group $G$. 
We prove that if 
$\mathcal{E}(G)$ is a cograph 
then it is also a chordal 
graph. Making use of 
properties of simplicial 
vertices, we characterise the 
finite groups $G$ whose 
enhanced power graph is 
diamond-free or a block 
graph. We also characterise 
the finite groups having 
enhanced power graph a 
cograph or a quasi-threshold 
graph, and those with 
$C_4$-free enhanced power 
graph. We use these 
characterisations to classify 
the finite nonabelian simple groups 
whose enhanced power graph is 
a cograph and give 
information on the finite 
simple groups whose enhanced 
power graph is $C_4$-free. 
Some open problems are posed.

\bigskip
\emph{Dedicated to the memory of our friend and colleague Carlo Casolo}
\end{abstract}
\maketitle

\section{Introduction and main results}\label{s:intro}

There are many graphs associated with groups, each focusing on some specific structural property of the group, see, for instance, \cite{alip, 1955Commuting, britnell,  BParxiv, Bubboloni_3, Bubboloni_4, lucchini} and the comprehensive survey in \cite{GraphsOnGroups}.
Typically, a graph $\Gamma(G)$
associated with a group $G$ reduces the complexity of the algebraic structure of $G$ to a more manageable combinatorial object $\Gamma(G)$, yielding information about $G$. Sometimes this information is scarcely conceivable when looking directly at $G$,  and yet appears completely natural when considering $\Gamma(G)$.
One remarkable example  
is given by the \textbf{enhanced power graph} $\mathcal{E}(G)$, introduced in \cite{alip} which has vertex set $G$ with distinct vertices adjacent if and only if
they generate a cyclic subgroup. The graph theoretic properties of $\mathcal{E}(G)$ sheds  light, often unexpectedly, on how  the cyclic subgroups of $G$ interact with each other. 
For instance,  P. J. Cameron and V. Phan \cite[Theorem 1]{CAPH} recently showed that, for every group $G$ which is finite or infinite with bounded exponent, the clique number and chromatic number of  $\mathcal{E}(G)$ are equal, that is $\mathcal{E}(G)$ is weakly perfect, and that number is  equal to the maximum order of an element of $G$. 
On the other hand, for some classic graph theoretic  property, we might ask for the groups $G$ for which $\mathcal{E}(G)$ has the property, and discover an interesting family of groups for which the cyclic subgroups produce some peculiar configurations. 
The properties we are dealing with in this paper, are to a large extent those concerning forbidden subgraphs: for a given family $\mathcal{X}$ of graphs, we say that a graph $\Gamma$ is \textbf{$\mathcal{X}$-free} if no induced subgraph of $\Gamma$ is isomorphic to a graph in the family $\mathcal{X}$.

Motivated by an open question posed by Cameron in \cite[Question 14]{GraphsOnGroups}  we consider,  for the enhanced power graph, the property of being a \textbf{cograph}, that is, of being $P_4$-free, where $P_4$ denotes a path with four vertices.  
Cographs are a class of graphs with deep structural and algorithmic significance, stemming from their multiple characterisations and beautiful properties (see \cite[Section 2.1]{alecu} and \cite[Theorem 2, Lemmata 3-5]{corneil}).  For example, since $P_4$ is self-complementary (that is, the complement of $P_4$ is isomorphic to $P_4$), the class of cographs is closed under complementation, a rare and powerful property. 
From the computational point of view,  a cograph can be recognised in linear time (see, for example, \cite{BCHP}) and its structure, especially its so-called cotree structure, allows problems such as isomorphism testing, finding the chromatic number or clique number or independence number, and determining whether the cograph has a Hamiltonian path or cycle, to be solved in linear time (see \cite{ JO}). 
It is also worth noting that cographs find application in representation problems in algebraic and logic contexts (see, for example, \cite{albert} and \cite{golumbic}).

For $n\ge3$, we denote by $C_n$ a cycle graph with $n$ vertices. For graphs in general, there is no relationship between the property of being $P_4$-free and that of being $C_4$-free. 
Surprisingly, the situation is very different if we consider only enhanced power graphs of groups: namely, if an enhanced power graph $\mathcal{E}(G)$ is $P_4$-free, then $\mathcal{E}(G)$ is also $C_4$-free. Further, as an easy consequence of this, if $\mathcal{E}(G)$ is $P_4$-free, then $\mathcal{E}(G)$ is also
$C_n$-free for every $n\geq 4$, that is,
it is a \textbf{chordal graph}.
This implication is  the first fundamental result of our paper.

\begin{theorem_A} Let $G$ be a finite group such that $\mathcal{E}(G)$  is a cograph. Then $\mathcal{E}(G)$  is chordal.
\end{theorem_A}

We emphasise that this fact, though strongly suggested by some results about chordal/cograph enhanced power graphs  in the literature, such as those in \cite{Ma24}, has to our knowledge not been observed previously.

Chordal graphs are a fundamental class of graphs due to their elegant structural properties and efficient algorithmic tractability. Many difficult  problems in graph theory are solved easily for chordal graphs. For example, a chordal graph admits a perfect elimination ordering, where vertices can be ordered such that, for each vertex, its neighbours that appear later in the order form a clique. This property  
makes chordal graphs, along with cographs,  an important and  extensively studied subclass of perfect graphs (graphs for which the chromatic number equals the clique number for every induced subgraph), see \cite{golumbic2}. 

A major difference between cographs and chordal graphs is the fact that the complement of a chordal graph is not necessarily chordal.
This property makes the implication in  
Theorem A  a somewhat unexpected result. 
Indeed, the 
class of graphs which are both cographs 
and chordal is  quite 
restricted. In the graph theoretic literature it is called the class of \textbf{quasi-threshold graphs} and contains the class of  \textbf{threshold 
graphs} (the $\{P_4, C_4,2K_2\}$-free graphs), see Section \ref{s:threshold}. By \cite[Theorem 1]{Ma24}, the class of finite groups 
whose enhanced power graph is a threshold graph 
 consists only of 
cyclic groups, dihedral groups and 
elementary abelian $2$-groups. On the other hand, the class of finite groups 
whose enhanced power graph is a quasi-threshold graph is, by Theorem A, 
the class of finite groups whose enhanced power graph is a cograph, and it contains infinitely many finite nonabelian simple groups. 
For example, it was shown by Cameron~\cite[Proposition 8.7]{GraphsOnGroups} that $\mathcal{E}(G)$ is a cograph for each of the groups $G=\PSL_2(q)$ with $q\geq4$. We obtain a new proof of this result of Cameron as a corollary of a more general result for groups admitting a partition consisting of cyclic subgroups (Theorem \ref{p:partition}).
Our second major result includes a classification of all finite nonabelian simple groups whose enhanced power graph is a quasi-threshold graph, or equivalently a cograph. There are precisely two infinite families and one individual example.

\begin{theorem_B}
Let $G$ be a finite nonabelian simple group. Then the following are equivalent.
\begin{itemize}
    
\item[$(1)$] $\mathcal{E}(G)$ is a cograph;
\item[$(2)$] $G$ lies in the list \   
$\mathcal{L}_1=\{ \PSL_2(q),\PSL_3(4), {\rm Sz}(q)\}$, where $q\geq4$ for $\PSL_2(q)$ and $q=2^{2m+1}\geq 8$ for ${\rm Sz}(q)$;

\item[$(3)$] each pair of distinct maximal cyclic subgroups of $G$  intersects in a $2$-group.
\end{itemize}
Moreover, the set $\mathcal{S}$ of  finite nonabelian simple groups $G$ such that $\mathcal{E}(G)$ is $C_4$-free satisfies 
$$\mathcal{L}_1 \cup \set{A_7, M_{11}, J_1}\subseteq \mathcal{S} \subseteq \mathcal{L}_1\cup\set{A_7,
M_{11},  J_1, \PSL_3(q), \PSU_3(q),  {\rm Ree}(q),  {}^2F_4(2)' }
$$
where $q$ is an odd prime  and $(q-1)/(\gcd(3,q-1)$ is a prime power for $\PSL_3(q)$,  $q>2$ for $\PSU_3(q)$, and $q=3^{2m+1}>3$ for ${\rm Ree}(q)$. 
\end{theorem_B}
Central to our proof of Theorem B are our  characterisations, in terms of subgroup properties, of finite groups $G$ for which $\mathcal{E}(G)$ is a cograph (Proposition~\ref{p:W}) or is $C_4$-free (Proposition~\ref{p:AB}).
Also we note that our proof of Theorem B relies on the classification of finite nonabelian simple groups (CFSG). It would be interesting to have a precise description  of the set $\mathcal{S}$ in Theorem~B.

\begin{problem}\label{prob1}
    For a given $n\geq 4$, determine the set of all  finite nonabelian simple groups $G$ such that $\mathcal{E}(G)$ is $C_n$-free. Determine the set of all  finite nonabelian simple groups $G$ such that $\mathcal{E}(G)$ is chordal.
\end{problem}

As part of our investigations of enhanced power graphs  we characterise the class of finite groups whose enhanced power graph is diamond-free, where a diamond is a graph obtained by  deleting an edge from the complete graph on four vertices. We  show in Theorem \ref{p:partition} that this class coincides with the class of finite groups whose enhanced power graph is a block graph (see Section \ref{sect:block}). Further, it turns out that this class also consists precisely of the cyclic groups together with groups which admit a nontrivial  partition consisting of cyclic subgroups.

We now refer to a graph $\Gamma$ as an \textbf{enhanced power graph} if there exists a finite group $G$ such that $\Gamma=\mathcal{E}(G)$. 
Our discussion above leads to an interesting description of various inclusions between  some main subclasses of the class of enhanced power graphs: 
\bigskip
$$C_4\hbox{-free}\supsetneq \hbox{Chordal}\supsetneq \hbox{Cographs}=\hbox{Quasi-Threshold}\supsetneq \hbox{Threshold}
$$
and 
$$
\hbox{Cographs}\supsetneq\hbox{Block}=\hbox{Diamond-free}.
$$

\bigskip\noindent
The proper inclusions are easily explained: $\mathcal{E}(S_6)$ is known to be $C_4$-free but not chordal,  (\cite[Remark 2]{Ma24}); $\mathcal{E}(A_7)$ is chordal but not a cograph (Proposition \ref{p:A7}); $\mathcal{E}(\PSL_2(q))$ is a cograph by Theorem B but it is not threshold by \cite[Theorem 1]{Ma24}; $\mathcal{E}(\Sz(q))$ is a cograph by Theorem B but it is not a block graph by Theorem \ref{p:partition} as it does not admit a partition consisting of cyclic subgroups.

We conclude this introduction with some comments about the tools used throughout the paper. A vertex of a graph is called \textbf{simplicial} if any two of its neighbours are adjacent. This concept plays a main role in the description of many classes of graphs. For instance, it is well-known that a graph is chordal if and only if every induced subgraph contains a simplicial vertex (\cite[Theorem 3.1]{farber}).
Throughout the paper, we make heavy use of the properties of simplicial vertices (Lemma \ref{l:path-simp}, Section \ref{s:simplicial}, Lemma \ref{l:cut-cyc-simp}, Lemma \ref{l:gencon1}) to derive our main results.
To the best of our knowledge, this is the first time, in the literature concerning graphs associated with groups, that  simplicial vertices have played such a central role. 
We hope that this new approach might prove effective in studying other graphs $\Gamma(G)$ associated with a group $G$. For example  we expect that the study of the set of simplicial vertices for particular choices of $\Gamma(G)$, such as the power graph or the commuting graph, could lead to some purely group theoretic results, and thus realise another part of the ambitious project outlined in \cite{BP25}. 

We inform the reader that from now on throughout the paper, the word group stands for finite group. Also, the graphs considered in this paper are always undirected  finite graphs.


\section{Basic graph theoretical notions}\label{basic}

In this section we collect together notation and terminology for 
graphs that we will use in the paper.

We denote by $\mathbb{N}$ the set of positive integers and we set $\mathbb{N}_0\coloneq\mathbb{N}\cup\{0\}$. 
For $k\in \mathbb{Z}$ we set $[k]\coloneq \{x\in \mathbb{N}: x\leq k\}$ and $[k]_0 \coloneq \{x\in \mathbb{N}_0: x\leq k\}.$ 

In this paper a graph is a pair $\Gamma=(V, E)$, where $V$ is a finite non-empty set and $E\subseteq\{e\subseteq V: |e|=2\}$; 
we call $V$ the set of vertices and $E$ the set of edges of $\Gamma$.  We often write $V(\Gamma)$ for the vertex set  and $E(\Gamma)$ for the edge set of $\Gamma.$

Let $\Gamma=(V,E)$ be a graph. A graph $\Gamma'=(V',E')$ is a subgraph of $\Gamma$ if $V'\subseteq V$ and $E'\subseteq E$, and  we write $\Gamma' \subseteq \Gamma.$ The relation
$\subseteq$ is a partial order on the set of the subgraphs of $\Gamma.$
Given $S\subseteq V$, the subgraph induced  by $S$ is the graph $\Gamma[S]$ with vertex set $S$ and edge set  $\{\{x,y\}\in E:x,y\in S\}$. A subgraph $\Gamma'$ of $\Gamma$ is said to be induced if $\Gamma'=\Gamma[S]$, where $S=V(\Gamma')$, and in this case we write $\Gamma' \leq \Gamma$. For $X\subseteq V$, the \textbf{$X$-vertex-deleted subgraph} of $\Gamma$ is $\Gamma[V\setminus X]$, denoted by $\Gamma - X$; if $X=\{x\}$ we write $\Gamma -x$ instead of $\Gamma-\{x\}$.

Let $n\in \mathbb{N}.$ 
Then the graph $P_n$ with $V(P_n)=[n]$ and  
$E(P_n)=\{\{i,i+1\}:i\in [n-1]\}$ is called the \textbf{path} on $n$ vertices of length $n-1$. In particular $P_1$ is the trivial path with one vertex and no edges.
Also $C_n$ denotes the graph with $V(C_n)=[n]$ and  $E(C_n)=\{\{i,i+1\}:i\in [n-1]\}\cup\{\{n,1\}\}$ and is called the \textbf{cycle} on $n$ vertices of length $n.$

A path in $\Gamma$ is a subgraph of $\Gamma$ isomorphic to $P_n$ for some $n\in \mathbb{N}_0;$ a cycle in $\Gamma$  is a subgraph of $\Gamma$ isomorphic to $C_n$ for some $n\geq 3$. For a path $\gamma$ in $\Gamma$, an isomorphism from $P_n$ to $\gamma$ induces an ordering $x_1,\dots, x_n$ of $V(\gamma)$ which determines the structure of $\gamma$, and we denote the subgraph $\gamma$ by the ordered sequence $(x_1,\dots, x_n)$. We call $x_1$ and $x_n$ the \textbf{extreme vertices} of $\gamma=(x_1,\dots, x_n)$ and, if $n\geq 3$, the remaining vertices are called the \textbf{internal vertices} of $\gamma$.
Also, if $n\geq 2$, we call $\{x_1,x_2\}$ and $\{x_{n-1},x_n\}$ the \textbf{extreme edges} of $\gamma$, and the other edges are called \textbf{internal edges}. Note that, as subgraphs, $(x_1,\dots, x_n)=(x_n,\dots, x_1).$
Also, for $n\geq 1$, a path $\gamma\cong P_n$ in $\Gamma$ is called an \textbf{$n$-path}.

Similarly, if $\mathcal{C}$ is a cycle on $n$ vertices in a graph $\Gamma$, then an isomorphism from $C_n$ to $\mathcal{C}$ induces an ordering $x_1,\dots, x_n$ of $V(\mathcal{C})$ which determines the structure of $\mathcal{C}$.  To emphasise the presence of the edge $\{x_n, x_1\}$, we denote the subgraph $\mathcal{C}$ by the ordered sequence $(x_1,\dots, x_n, x_1)$. This representation of $\mathcal{C}$ is of course not unique, for example, as subgraphs,  $(x_1,x_2,x_3, x_4, x_1)=(x_2,x_3, x_4, x_1, x_2),$ etc. Also, for $n\geq 3$, a cycle $\mathcal{C}\cong C_n$ in $\Gamma$ is called an \textbf{$n$-cycle}.

Given a family $\mathcal{X}$ of graphs, we say that a graph is \textbf{$\mathcal{X}$-free} if none of its induced subgraphs is isomorphic to a graph in the family $\mathcal{X}$.
When $\mathcal{X}=\{\Delta\}$ we simply say that the graph is $\Delta$-free.

\begin{definition}\label{d:cograph/chordal}
{\rm A graph is called  a {\bf cograph} if it 
is $P_4$-free; {\bf chordal} if it 
is $\{C_n:n\geq 4\}$-free.
}
\end{definition}

For a graph $\Gamma=(V,E)$, let $x\in V$. The \textbf{neighborhood} of $x$ in $\Gamma$ is the set $N_\Gamma(x)=\left\lbrace y \in V | \left\lbrace y, x \right\rbrace \in E\right\rbrace$. The \textbf{degree} of $x$ in $\Gamma$, is the size of $N_\Gamma(x)$ and is denoted by $d_{\Gamma}(x).$
The \textbf{closed neighborhood} of $x$ in $\Gamma$ is   $N_\Gamma[x]=N_\Gamma(x)\cup\{x\} $. Note that computation of closed neighborhoods with respect to an induced subgraph  is straightforward: if $x\in S\subseteq V$, then $N_{\Gamma[S]}[x]=N_{\Gamma}[x]\cap S.$

 If $N_\Gamma[x]=V$, then $x$ is called a {\bf star vertex} of $\Gamma$, and the {\bf set of star vertices} of $\Gamma$ is denoted by $\mathcal{S}(\Gamma)$. If $\mathcal{S}(\Gamma)=V$, then $\Gamma$ is called a \textbf{complete graph}. The complete graph with $n$ vertices is denoted by $K_n.$
 A subset $X$ of $V$ is called a {\bf clique} if the graph induced by $X$ in $G$ is complete; a {\bf maximal clique} if $X$ is a clique and it is maximal, with respect to inclusion, among the  cliques of $G.$
\begin{definition}\label{d:simplicial}
 {\rm Let $G=(V,E)$ be a graph. A vertex $x\in V$ is called {\bf simplicial} if the induced subgraph $\Gamma[N_\Gamma[x]]$ is complete. The {\bf set of simplicial vertices} of $\Gamma$ is denoted by sl$(\Gamma)$.}   
\end{definition}

\begin{remark}\label{r:star-simplicial} Let $\Gamma=(V,E)$ be a graph. Then,
    $\mathcal{S}(\Gamma)\cap \mathrm{sl}(\Gamma)\neq\varnothing$ if and only if $\Gamma $ is a  complete graph.
\end{remark}
\begin{proof}
 Assume that $\mathcal{S}(\Gamma)\cap \mathrm{sl}(\Gamma)\neq\varnothing$ and let $z\in \mathcal{S}(\Gamma)\cap \mathrm{sl}(\Gamma)$. Pick $x,y\in V$. Then, since $z$ is a star vertex, we have $\{x,z\}, \{y,z\}\in E$. Thus, since $z$ is simplicial, we deduce $\{x,y\}\in E$. It follows that  $\Gamma$ is complete. Conversely, if $\Gamma$ is complete then $\mathcal{S}(\Gamma)=V=\mathrm{sl}(\Gamma)$ and hence $\mathcal{S}(\Gamma)\cap \mathrm{sl}(\Gamma)=V\neq\varnothing.$   
\end{proof}

\begin{lemma}\label{l:path-simp} Suppose that a graph $\Gamma$ contains either an induced path $\gamma\cong P_n$, with $n\ge3$,   or  an induced cycle  $\mathcal{C}\cong C_n$,  with $n\geq 4$.
 Then the following hold:
 \begin{itemize}
     \item[$(i)$] If $x\in V(\gamma)\cap \mathrm{sl}(\Gamma)$, then $x$ is an extreme vertex of $\gamma$.
      \item[$(ii)$] If $x\in V(\gamma)\cap \mathcal{S}(\Gamma)$, then $n=3$ and $x$ is the unique internal vertex of $\gamma$. Thus if $n\geq 4$, then we have $V(\gamma)\cap \mathcal{S}(\Gamma)=\varnothing$. 
     \item[($iii)$]  $V(\mathcal{C})\cap \mathrm{sl}(\Gamma)=V(\mathcal{C})\cap \mathcal{S}(\Gamma)=\varnothing$. 
 \end{itemize}
 \end{lemma}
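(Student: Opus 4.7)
The plan is to exploit the defining property of an induced subgraph: the only edges of $\Gamma$ between vertices of $\gamma$ (respectively $\mathcal{C}$) are the edges of $\gamma$ (respectively $\mathcal{C}$) themselves. This rigidly controls each intersection $N_\Gamma[x_i]\cap V(\gamma)$, after which the simplicial and star conditions are almost tautological to check.

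For part $(i)$, I would argue contrapositively. If $x=x_i$ is an internal vertex of $\gamma=(x_1,\dots,x_n)$, so $2\le i\le n-1$, then $x_{i-1},x_{i+1}\in N_\Gamma[x]$, while $\{x_{i-1},x_{i+1}\}\notin E(\gamma)$ and hence $\{x_{i-1},x_{i+1}\}\notin E(\Gamma)$ because $\gamma$ is induced. Consequently $\Gamma[N_\Gamma[x]]$ is not complete, so $x\notin\mathrm{sl}(\Gamma)$, leaving only the extreme vertices of $\gamma$ as candidates.

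For part $(ii)$, a star vertex $x=x_i$ of $\Gamma$ must in particular be adjacent in $\Gamma$ to every vertex of $V(\gamma)\setminus\{x_i\}$. Since $\gamma$ is induced, the neighbours of $x_i$ inside $V(\gamma)$ are precisely $x_{i-1}$ (when $i>1$) and $x_{i+1}$ (when $i<n$), so at most two vertices. Thus $n-1\le 2$, forcing $n\le 3$, and together with $n\ge 3$ we obtain $n=3$; the endpoint cases $i\in\{1,n\}$ are ruled out since they give $n-1\le 1$, so $i=2$ is the unique internal vertex. The ``$n\ge 4$'' statement is an immediate contrapositive.

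For part $(iii)$, the same induced-subgraph analysis is applied to $\mathcal{C}=(x_1,\dots,x_n,x_1)$ with $n\ge 4$. For any $x_i\in V(\mathcal{C})$, its neighbours inside $V(\mathcal{C})$ in $\Gamma$ are exactly $x_{i-1}$ and $x_{i+1}$ (indices mod $n$); these are distinct because $n\ge 4$, and non-adjacent in $\Gamma$ because $\mathcal{C}$ is induced, so $x_i\notin\mathrm{sl}(\Gamma)$. Moreover the vertex $x_{i+2}$ (mod $n$) lies in $V(\mathcal{C})$ and is not a neighbour of $x_i$ in $\Gamma$ (again by induced-ness, and since $n\ge 4$ guarantees $x_{i+2}\notin\{x_{i-1},x_i,x_{i+1}\}$), so $x_i\notin\mathcal{S}(\Gamma)$. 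No real obstacle arises; the only mild care needed is in separating the endpoint cases $i\in\{1,n\}$ in $(ii)$ and checking the distinctness of indices modulo $n$ in $(iii)$, both of which are immediate from $n\ge 3$ and $n\ge 4$ respectively.
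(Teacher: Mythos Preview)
Your proof is correct and follows essentially the same approach as the paper's: in each part you use that $\gamma$ (or $\mathcal{C}$) is induced to identify exactly which pairs of vertices of the path/cycle are adjacent in $\Gamma$, and then read off the simplicial/star conditions. The paper phrases $(i)$ and $(iii)$ as proofs by contradiction rather than contrapositively, and in $(ii)$ it counts $d_\gamma(x_i)=n-1\le 2$ directly, but the underlying arguments are identical.
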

\begin{proof}
Let $\gamma=(x_1,\dots, x_n)$ for suitable distinct $x_1,\dots, x_n\in V.$ 

 $(i)$  Assume to the contrary that $x_i\in V(\gamma)\cap \mathrm{sl}(\Gamma)$  for some $2\leq i\leq n-1.$ Then $x_{i-1}, x_{i+1}\in N_{\Gamma}[x_i]$ and, since $x_i$ is simplicial, we have $\{x_{i-1}, x_{i+1}\}\in E$, contradicting the fact that $\gamma$ is an induced path.
 \smallskip
 
 $(ii)$ Assume that $x_i\in V(\gamma)\cap \mathcal{S}(\Gamma)$ for some $i\in[n].$
 Then, for every $j\in [n]\setminus\{i\}$, we have $\{x_i,x_j\}\in E$, and since $\gamma$ is an induced subgraph this implies that $\{x_i,x_j\}\in E(\gamma).$ Thus $d_{\gamma}(x_i)=n-1\geq 2$. On the other hand, since the graph $\gamma$ is a path, also $d_{\gamma}(x_i)\leq 2.$ Therefore $d_{\gamma}(x_i)=n-1= 2$, so $n=3$ and $x_i$ is the unique internal vertex of $\gamma$.
\smallskip

$(iii)$ 
   Consider $\mathcal{C}=(x_1, x_2,x_3,\dots ,x_n,x_1)\cong C_n$, with $n\geq 4$. Assume first that $\mathcal{C}$ contains a simplicial vertex $x$. As discussed above, by replacing the representation of $\mathcal{C}$ if necessary, we may assume that  $x=x_3\in$ sl$(\Gamma)$. Since $x_2, x_4\in N_{\Gamma}(x_3)$ and are distinct, we deduce that $\{x_2,x_4\}\in E$ contradicting the fact that  $\mathcal{C}$ is an induced cycle.
  Now assume that $\mathcal{C}$ contains a star vertex $x$. Then, replacing the representation of $\mathcal{C}$ if necessary, we may assume that $x=x_2\in\mathcal{S}(\Gamma).$ This implies that $\{x_2,x_4\}\in E$, again contradicting  the fact that  $\mathcal{C}$ is an induced cycle.
\end{proof}

\section{power graph and
enhanced power graph }\label{s:peg}

In this section we explore group theoretic consequences/equivalences of various graph theoretic properties of  the power graph and the enhanced power graph of a group. First we recall the definitions.

\begin{definition}\label{d:EG}{\rm 
Let $G$ be a group. 

\noindent (a) The {\bf power graph} of $G$, denoted by 
$\mathcal{P}(G)$, is the graph with vertex set $G$ such that $\set{x,y}$ is an edge if and only if $x\neq y$ and for some  $m\in \mathbb{N}$, either $x=y^m$ or $y=x^m$.

\noindent (b) The {\bf enhanced power graph} of $G$, 
denoted by $\mathcal{E}(G)$, is the graph with vertex 
set $G$ such that $\set{x,y}$ is an edge if and only of $x\ne y$ and $\seq{x,y}$ is cyclic. When no ambiguity arises, for $x\in G,$ we use the notation $N[x]$ instead of $N_{\mathcal{E}(G)}[x]$.} 
\end{definition}

Most of our attention will be focused on enhanced power graphs; for a recent survey on power graphs we suggest \cite{Kumar}.
In particular we note that
$\mathcal{P}(G)$ is a spanning subgraph of $\mathcal{E}(G)$, that is, $\mathcal{P}(G)$ and $\mathcal{E}(G)$ have the same vertex set $G$ and $E(\mathcal{P}(G))\subseteq E(\mathcal{E}(G))$.
As a consequence, $\mathcal{P}(G)$ is an induced subgraph of $\mathcal{E}(G)$ if and only if $\mathcal{P}(G)=\mathcal{E}(G)$. We denote by $\mathcal{E}(G)\setminus \mathcal{P}(G)$ the graph having vertex set $G$ and edge set $E(\mathcal{E}(G))\setminus E(\mathcal{P}(G))$.
The groups for which $\mathcal{P}(G)=\mathcal{E}(G)$ were identified in \cite[Theorem 28]{alip} and are the so-called {\bf EPPO groups}, that is, finite groups in which every element has  prime power order. The EPPO groups are completely classified due to results of Higman, Suzuki and Brandl (see \cite{Ma24} for a discussion of the classification history). 

Since, for all $x,y\in G$, the subgroup $\langle x,y\rangle$ is contained in every 
subgroup of $G$ containing both $x$ and $y$, 
for each subgroup $H$ of $G$, the power graph [the enhanced power graph] of $H$ is an induced subgraph of $\mathcal{P}(G)$ [of $\mathcal{E}(G)$].  We record this basic fact  and a useful consequence.

\begin{lemma}\label{l:Ma24_Lem1}
Let $G$ be a  group and $H$ a subgroup of $G$. Then $\mathcal{E}(H)$   is an induced subgraph of $\mathcal{E}(G)$, and  $\mathcal{P}(H)$   is an induced subgraph of $\mathcal{P}(G)$. 
\end{lemma}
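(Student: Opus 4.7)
The plan is to observe that both defining relations — ``$x,y$ generate a cyclic subgroup'' and ``one of $x,y$ is a positive-integer power of the other'' — are intrinsic to the pair of elements and make no reference to an ambient group. Since $V(\mathcal{E}(H))=H\subseteq G=V(\mathcal{E}(G))$, and likewise for $\mathcal{P}$, it will suffice, by the definition of induced subgraph in Section~\ref{basic}, to verify that the edge set of $\mathcal{E}(H)$ (respectively $\mathcal{P}(H)$) coincides with the set of edges of $\mathcal{E}(G)$ (resp.\ $\mathcal{P}(G)$) whose endpoints both lie in $H$.

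For the enhanced power graph, I would fix distinct $x,y\in H$ and invoke the fact, recorded in the paragraph immediately preceding the lemma, that $\langle x,y\rangle$ — being the intersection of all subgroups containing $\{x,y\}$ — is the same object whether computed inside $H$ or inside $G$. In particular, its cyclicity is independent of the choice of ambient group, so $\{x,y\}\in E(\mathcal{E}(H))$ if and only if $\{x,y\}\in E(\mathcal{E}(G))$. This yields $\mathcal{E}(H)=\mathcal{E}(G)[H]$, which is the required statement.

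For the power graph the argument is even more direct: the condition that there exists $m\in\mathbb{N}$ with $x=y^m$ or $y=x^m$ refers only to powers of the two elements themselves, so $\{x,y\}\in E(\mathcal{P}(H))$ if and only if $\{x,y\}\in E(\mathcal{P}(G))$ whenever $x,y\in H$. Hence $\mathcal{P}(H)=\mathcal{P}(G)[H]$.

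There is no real obstacle here; the lemma is essentially a reformulation of the definitions, and the only observation needed is the (tautological) compatibility of the ``generated subgroup'' operator with inclusion of ambient groups. I would therefore keep the proof to a couple of lines, simply citing this compatibility and the intrinsic character of the ``is a power of'' relation.
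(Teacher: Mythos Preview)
Your proposal is correct and matches the paper's approach exactly: the paper does not even give a formal proof, simply recording in the preceding paragraph that $\langle x,y\rangle$ is contained in every subgroup containing $x$ and $y$ (hence is the same computed in $H$ or in $G$), and then stating the lemma as a basic fact. Your write-up makes precisely this observation and is entirely adequate.
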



\begin{corollary}\label{c:Ma24_Lem1}
Let $G$ be a group and $H\leq G$. 
If $\mathcal{E}(G)$ is a cograph {\rm[}a chordal graph, a $C_n$-free graph, for some $n\geq 4${\rm]}, then also $\mathcal{E}(H)$ is a cograph {\rm[}a chordal graph, a $C_n$-free graph, for some $n\geq 4${\rm]}. The same holds with the graphs $\mathcal{P}(G)$ and $\mathcal{P}(H)$ replacing $\mathcal{E}(G)$ and $\mathcal{E}(H)$, respectively.
\end{corollary}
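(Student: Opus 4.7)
The plan is to observe that each of the three properties in question---being a cograph, being chordal, and being $C_n$-free for a given $n\geq 4$---is defined as $\mathcal{X}$-freeness for a suitable family $\mathcal{X}$ of graphs (namely $\mathcal{X}=\{P_4\}$, $\mathcal{X}=\{C_m:m\geq 4\}$, and $\mathcal{X}=\{C_n\}$, respectively). So the proof reduces to the general statement that $\mathcal{X}$-freeness is hereditary under taking induced subgraphs: if $\Gamma''\leq \Gamma'\leq \Gamma$ and $\Gamma''$ is isomorphic to some member of $\mathcal{X}$, then $\Gamma''$ witnesses that $\Gamma$ is not $\mathcal{X}$-free. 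In other words, contrapositively, if $\Gamma$ is $\mathcal{X}$-free then so is every induced subgraph of $\Gamma$.

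The only nontrivial ingredient needed to transplant this general fact to our situation is that $\mathcal{E}(H)$ is literally an induced subgraph of $\mathcal{E}(G)$, which is exactly the content of Lemma~\ref{l:Ma24_Lem1}. Thus, to show $\mathcal{E}(H)$ is a cograph when $\mathcal{E}(G)$ is, I would take an arbitrary induced subgraph $\Delta$ of $\mathcal{E}(H)$; by Lemma~\ref{l:Ma24_Lem1} and transitivity of the induced subgraph relation (which holds since $N_{\mathcal{E}(H)}[x]=N_{\mathcal{E}(G)}[x]\cap H$ for $x\in H$), $\Delta$ is also an induced subgraph of $\mathcal{E}(G)$, so $\Delta\not\cong P_4$ by hypothesis. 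Hence $\mathcal{E}(H)$ is $P_4$-free. The chordal case and the $C_n$-free case are handled by literally the same argument with $P_4$ replaced by $C_m$ (any $m\geq 4$) or $C_n$.

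Finally, the version for power graphs follows verbatim because Lemma~\ref{l:Ma24_Lem1} also asserts that $\mathcal{P}(H)$ is an induced subgraph of $\mathcal{P}(G)$; no further group-theoretic input is required.

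There is no real obstacle here: the corollary is a formal consequence of Lemma~\ref{l:Ma24_Lem1} together with the trivial observation that forbidden-induced-subgraph classes are hereditary, and the only point worth stating explicitly is the transitivity of the induced subgraph relation.
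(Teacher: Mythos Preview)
Your proposal is correct and matches the paper's approach: the paper states this corollary immediately after Lemma~\ref{l:Ma24_Lem1} without a separate proof, treating it as an immediate consequence of that lemma together with the hereditary nature of $\mathcal{X}$-freeness. Your write-up simply makes explicit the trivial reasoning the paper leaves implicit.
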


We now introduce a subset, which turns out to be a subgroup (see Lemma~\ref{l:cyc}) and plays an interesting role for enhanced power graphs.
The {\bf cycliciser} of $G$ is defined by
\begin{equation}\label{e:cyc}
 \mathrm{Cyc}(G)\coloneq \{x\in G\ \vert \ \langle x,y\rangle\ \hbox{is cyclic } \hbox{for all}\  y\in G \},   
\end{equation}
and was introduced in \cite{Wepsic} under the name of cycel.
 Note that the cycliciser is not necessarily trivial. For example,
$\Cyc(Q_8)=Z(Q_8)\cong C_2$.

A cyclic subgroup of a group $G$ is called a {\bf maximal cyclic subgroup} of $G$ if it is not properly contained in another cyclic subgroup of $G$. We denote by $\mathcal{M}(G)$ the {\bf set of maximal cyclic subgroups} of $G$.   Note that $|\mathcal{M}(G)|\geq 1$ 
and that $|\mathcal{M}(G)|= 1$ if and only if $G$ is cyclic.
These subgroups play a major role in the study of $\mathcal{E}(G)$. For example, they help us in understanding the maximal cliques of $\mathcal{E}(G)$.

\begin{lemma}\cite[Lemma 33]{alip}\label{alipur2} Let $G$ be a finite group. Then $X\subseteq G$ is a maximal clique in $\mathcal{E}(G)$ if and only if $X\in\mathcal{M}(G)$.
\end{lemma}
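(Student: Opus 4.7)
The plan is to reduce both implications to a single key fact: if $X\subseteq G$ is any clique of $\mathcal{E}(G)$, then the subgroup $\langle X\rangle$ is cyclic. Once this is granted, both directions of the equivalence fall out quickly, so I describe them first and then attack the key fact, which is the real content of the proof.

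Granting the key fact, suppose first that $C\in\mathcal{M}(G)$. Since $C$ is cyclic, any two of its elements generate a cyclic subgroup of $C$, so $C$ is a clique of $\mathcal{E}(G)$. If $X\supseteq C$ is a clique, then $\langle X\rangle$ is cyclic and contains the maximal cyclic subgroup $C$, forcing $\langle X\rangle=C$, hence $X=C$; so $C$ is a maximal clique. Conversely, suppose $X$ is a maximal clique. By the key fact $\langle X\rangle$ is cyclic, so the whole subgroup $\langle X\rangle$ is itself a clique containing $X$; by maximality $X=\langle X\rangle$, so $X$ is a cyclic subgroup of $G$. Embedding $X$ in some maximal cyclic subgroup $C$ (which exists by finiteness), the same reasoning applied to the clique $C\supseteq X$ forces $X=C\in\mathcal{M}(G)$.

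It remains to establish the key fact, which is the main obstacle. Since each pair of elements of $X$ generates a cyclic (hence abelian) subgroup, the elements of $X$ commute pairwise, so $A\coloneq\langle X\rangle$ is a finite abelian group. I would then invoke the fundamental theorem of finite abelian groups to write $A\cong C_{d_1}\times\cdots\times C_{d_k}$ with $d_1\mid\cdots\mid d_k$, and argue $k=1$. Assume for contradiction that $k\geq 2$, pick any prime $p\mid d_1$, and consider the quotient $A/A^p$, which is an $\mathbb{F}_p$-vector space of dimension $k\geq 2$. Because $X$ generates $A$, its image $\bar X$ generates $A/A^p$ as an $\mathbb{F}_p$-space, so $\bar X$ contains two $\mathbb{F}_p$-linearly independent vectors $\bar x,\bar y$. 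Lifting to $x,y\in X$, the surjection $\langle x,y\rangle\twoheadrightarrow\langle\bar x,\bar y\rangle\cong C_p\times C_p$ shows that $\langle x,y\rangle$ admits the non-cyclic quotient $C_p\times C_p$, so $\langle x,y\rangle$ is not cyclic, contradicting the assumption that $X$ is a clique in $\mathcal{E}(G)$. Hence $k=1$ and $A$ is cyclic.

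The only nontrivial point is the propagation from pairwise cyclic generation to joint cyclicity of $\langle X\rangle$; the reduction to this step, and the deduction of both implications from it, are essentially bookkeeping with the maximality of $X$ and of $C$.
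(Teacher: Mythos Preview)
Your proof is correct. The paper does not give its own argument for this lemma but simply cites \cite[Lemma~33]{alip}; the same reference supplies the three-element version (quoted in the paper as Lemma~\ref{alipur1}), and the original proof proceeds essentially by induction on $|X|$ using that step. Your approach is genuinely different: rather than building up the clique one element at a time, you pass globally to the abelian group $A=\langle X\rangle$, apply the invariant-factor decomposition, and detect a failure of cyclicity by producing a $C_p\times C_p$ quotient witnessed by two elements of $X$. This is a clean shortcut that handles the whole clique at once and makes transparent why pairwise cyclicity forces joint cyclicity.
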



An element $x$ of $G$ is called a {\bf maximal element} if $\langle x\rangle\in \mathcal{M}(G).$ We denote the {\bf set of maximal elements} of $G$ by
 \begin{equation}\label{e:M}
     M(G) := \{ x\in G \mid \langle x\rangle\in \mathcal{M}(G)\}. 
 \end{equation}
We summarise some basic properties of the cycliciser  and the enhanced power graph in the following lemma. 

\begin{lemma}\label{l:cyc}    
Let $G$ be a group, and $\mathrm{Cyc}(G)$, $M(G)$ as in \eqref{e:cyc}, \eqref{e:M}. Then 
\begin{enumerate}
    \item[$(i)$] $ \mathrm{Cyc}(G) = \mathcal{S}(\mathcal{E}(G))$;
    \item[$(ii)$] $ \mathrm{Cyc}(G) = \cap_{H\in \mathcal{M}(G)} H$  is a cyclic subgroup of the centre of $G$; 
    \item[$(iii)$] $\mathcal{E}(G)$ is a complete graph if and only if $G$ is cyclic.
\end{enumerate}
\end{lemma}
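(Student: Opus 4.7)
The three parts are linked, and my plan is to prove them in the order (i), (ii), (iii), with (ii) doing most of the structural work and (iii) following almost immediately.

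For part (i), I would just unwind the definitions. A vertex $x \in G$ is a star vertex of $\mathcal{E}(G)$ iff $\{x,y\} \in E(\mathcal{E}(G))$ for every $y \in G\setminus\{x\}$, which by Definition~\ref{d:EG}(b) means $\langle x,y\rangle$ is cyclic for every $y \neq x$. Since $\langle x,x\rangle = \langle x\rangle$ is automatically cyclic, this is the same condition as $\langle x,y\rangle$ cyclic for all $y \in G$, i.e., $x \in \mathrm{Cyc}(G)$. This is essentially a one-line check.

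For part (ii), the substance is the equality $\mathrm{Cyc}(G) = \bigcap_{H\in\mathcal{M}(G)} H$. I would prove two containments. For $\subseteq$: given $x \in \mathrm{Cyc}(G)$ and $H = \langle y\rangle \in \mathcal{M}(G)$, the subgroup $\langle x,y\rangle$ is cyclic and contains $H$, so by maximality of $H$ we get $\langle x,y\rangle = H$, hence $x \in H$. For $\supseteq$: given $x \in \bigcap_{H\in\mathcal{M}(G)} H$ and any $y \in G$, the cyclic subgroup $\langle y\rangle$ is contained in some $H \in \mathcal{M}(G)$ (using finiteness of $G$), and since $x$ also lies in $H$ we have $\langle x,y\rangle \leq H$, hence $\langle x,y\rangle$ is cyclic. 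Once the equality is established, $\mathrm{Cyc}(G)$ is a subgroup of $G$ as an intersection of subgroups, and it is cyclic because it is contained in any single cyclic subgroup $H \in \mathcal{M}(G)$ and subgroups of cyclic groups are cyclic. For centrality, I would observe directly that for $x \in \mathrm{Cyc}(G)$ and any $g \in G$, the subgroup $\langle x,g\rangle$ is cyclic, hence abelian, so $x$ and $g$ commute; thus $\mathrm{Cyc}(G) \subseteq Z(G)$.

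For part (iii), the forward direction is trivial: if $G$ is cyclic then $\langle x,y\rangle \leq G$ is cyclic for all $x,y$, so every two distinct vertices are adjacent. For the converse, if $\mathcal{E}(G)$ is complete then every vertex is a star vertex, so by (i) we get $\mathrm{Cyc}(G) = G$; applying (ii), $G$ is contained in every maximal cyclic subgroup, which forces $\mathcal{M}(G) = \{G\}$ and so $G$ itself is cyclic.

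I do not expect any real obstacles: the proof is essentially a careful translation between graph-theoretic and group-theoretic definitions. The only point where one has to be a little careful is the use of finiteness of $G$ in part (ii) to guarantee that every cyclic subgroup is contained in some maximal cyclic subgroup; this is the sole place where we leave the purely formal manipulation, but the paper has already restricted to finite groups, so it is immediate.
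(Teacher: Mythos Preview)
Your proof is correct and, for parts~(i) and~(iii), essentially identical to the paper's. The only difference is in part~(ii): the paper simply cites \cite[Section~2.1]{Wepsic} rather than writing out the argument, whereas you supply a clean self-contained proof of both inclusions and of the cyclicity and centrality of $\Cyc(G)$. Your direct argument is exactly what one would expect that reference to contain, so this is not a genuinely different route, just a filled-in citation.
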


\begin{proof}
     $(i)$ It follows from the definitions of adjacency in $\mathcal{E}(G)$ and of a star vertex.
     
     $(ii)$ 
     This is  a synthesis of the content of \cite[Section 2.1]{Wepsic}.

     $(iii)$ If $G$ is cyclic then $\mathrm{Cyc}(G)=G$  by \eqref{e:cyc}, and hence $\mathcal{E}(G)$ is complete. Conversely, assume that $\mathcal{E}(G)$ is complete. Then $\mathrm{Cyc}(G)=G$  by \eqref{e:cyc}, and hence by $(ii)$, $G$ is cyclic.
\end{proof}

Next we give a combinatorial criterion, in terms of properties of $\mathcal{E}(G)$, for membership of $M(G)$.

\begin{lemma}\label{l:easy} 
Let $G$ be a group, let $x\in G$, and let $M(G)$ be as in \eqref{e:M}. Then 
$x \in M(G)$ if and only if the closed neighbourhood  $N[x]$ is 
$\langle x\rangle.$
\end{lemma}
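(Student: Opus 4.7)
The plan is to translate each side of the equivalence into the statement ``every element adjacent (or equal) to $x$ lies in $\langle x\rangle$'' via the defining property of edges in $\mathcal{E}(G)$. First I would unpack the closed neighbourhood using Definition~\ref{d:EG}(b): for $y\in G$, we have $y\in N[x]$ if and only if $y=x$ or $\langle x,y\rangle$ is cyclic. In particular, every element of $\langle x\rangle$ lies in $N[x]$, since for $y\in\langle x\rangle$ the subgroup $\langle x,y\rangle=\langle x\rangle$ is cyclic.

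For the forward direction, assume $x\in M(G)$, so $\langle x\rangle\in\mathcal{M}(G)$. Take $y\in N[x]$; if $y=x$ then $y\in\langle x\rangle$, and otherwise $\langle x,y\rangle$ is cyclic and contains $\langle x\rangle$. Maximality of $\langle x\rangle$ among cyclic subgroups of $G$ forces $\langle x,y\rangle=\langle x\rangle$, so $y\in\langle x\rangle$. Combined with the containment $\langle x\rangle\subseteq N[x]$ noted above, this gives $N[x]=\langle x\rangle$.

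For the converse, assume $N[x]=\langle x\rangle$ and let $C$ be any cyclic subgroup of $G$ with $\langle x\rangle\leq C$. For each $y\in C$, the subgroup $\langle x,y\rangle\leq C$ is cyclic (subgroups of cyclic groups are cyclic), so $y\in N[x]=\langle x\rangle$. Hence $C\subseteq\langle x\rangle$, forcing $C=\langle x\rangle$. Thus $\langle x\rangle$ is maximal cyclic and $x\in M(G)$.

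No serious obstacle is anticipated; the entire argument rests only on the definition of the edge relation in $\mathcal{E}(G)$ and the elementary fact that subgroups of cyclic groups are cyclic. The only point to be careful about is the edge case $y=x$, which is already absorbed by passing to the closed neighbourhood rather than the open one.
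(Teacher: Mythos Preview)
Your proof is correct and follows essentially the same approach as the paper's: both establish $\langle x\rangle\subseteq N[x]$ from the definition, use maximality of $\langle x\rangle$ to get the forward implication, and for the converse show that any cyclic overgroup of $\langle x\rangle$ is contained in $N[x]=\langle x\rangle$. The only cosmetic difference is that the paper picks a single maximal cyclic subgroup $\langle y\rangle\supseteq\langle x\rangle$ and shows $\langle y\rangle=\langle x\rangle$, while you quantify over all cyclic $C\supseteq\langle x\rangle$; the arguments are otherwise identical.
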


 \begin{proof}  
It follows from the definition of $\mathcal{E}(G)$ that $\langle x\rangle \subseteq N[x]$. Suppose first that $x \in M(G)$, and let $y\in N[x]$. Then $\langle y,x\rangle$ is cyclic and contains $ \langle x\rangle$. As $x \in M(G)$, $\langle x\rangle$ is a maximal cyclic subgroup, and hence $\langle y,x\rangle= \langle x\rangle$, so $y\in \langle x\rangle$. Thus $N[x]= \langle x\rangle.$
Conversely, suppose that $N[x]= \langle x\rangle$, and let $\langle y\rangle$ be a maximal cyclic subgroup containing $x$.  Then $y\in N[x]= \langle x\rangle.$ Thus $\langle y\rangle\subseteq \langle x\rangle$, and from the maximality of $\langle y\rangle$ we deduce that $\langle y\rangle= \langle x\rangle$. Thus $x\in M(G)$.
\end{proof}

\subsection{Simplicial vertices in enhanced power graphs.} \label{s:simplicial} 

We start this section by obtaining both graph theoretic  and group 
theoretic  necessary and sufficient conditions for a vertex of $\mathcal{E}(G)$ to be simplicial (see Definition \ref{d:simplicial}).

\begin{lemma}\label{l:simplicial}
Let $G$ be a group and $x\in G$. Then the following are equivalent.
\begin{enumerate}
\item[$(a)$] 
$x\in \mathrm{sl}(\mathcal{E}(G))$;
\item[$(b)$] $x$ lies in a unique maximal cyclic subgroup 
of $G$;
\item[$(c)$] $N[x]$ is a maximal 
cyclic subgroup of $G$;
\item[$(d)$] $N[x]=\langle y \rangle = N[y]$, for some $y\in M(G)$;
\item[$(e)$] $N[x]$ is a  
cyclic subgroup of $G$.
\end{enumerate}
\end{lemma}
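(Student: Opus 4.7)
The plan is to establish a cycle of implications $(a)\Rightarrow (b)\Rightarrow (c)\Rightarrow (d)\Rightarrow (e)\Rightarrow (a)$. The main tools are the definition of $\mathcal{E}(G)$, the characterisation of $M(G)$ already proved in Lemma~\ref{l:easy}, and the elementary fact that every subgroup of a cyclic group is cyclic and every cyclic subgroup of the finite group $G$ is contained in some maximal cyclic subgroup.

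For $(a)\Rightarrow (b)$, I would suppose that $\langle y_1\rangle$ and $\langle y_2\rangle$ are two maximal cyclic subgroups of $G$ both containing $x$. Then $y_1,y_2\in N[x]$, and because $x$ is simplicial the set $N[x]$ induces a clique in $\mathcal{E}(G)$, so $\langle y_1,y_2\rangle$ is cyclic; maximality of $\langle y_1\rangle$ and $\langle y_2\rangle$ then forces $\langle y_1\rangle=\langle y_2\rangle$. For $(b)\Rightarrow (c)$, let $\langle y\rangle$ be the unique maximal cyclic subgroup containing $x$. Every $z\in N[x]$ lies in some maximal cyclic subgroup containing $\langle x,z\rangle$, and that subgroup must be $\langle y\rangle$ by uniqueness, giving $N[x]\subseteq\langle y\rangle$; the reverse inclusion is immediate from the definition of $\mathcal{E}(G)$.

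For $(c)\Rightarrow (d)$, write $N[x]=\langle y\rangle$ with $\langle y\rangle\in\mathcal{M}(G)$, so $y\in M(G)$, and apply Lemma~\ref{l:easy} to conclude $N[y]=\langle y\rangle$. The implication $(d)\Rightarrow (e)$ is immediate. Finally, for $(e)\Rightarrow (a)$, if $N[x]$ is a cyclic subgroup of $G$, then for any two distinct $u,v\in N[x]$ the subgroup $\langle u,v\rangle$ is contained in the cyclic group $N[x]$, hence is cyclic, so $\{u,v\}\in E(\mathcal{E}(G))$; thus $\mathcal{E}(G)[N[x]]$ is complete and $x\in\mathrm{sl}(\mathcal{E}(G))$.

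No serious obstacles are expected: each step reduces to the defining property of the enhanced power graph or to the subgroup lattice of a cyclic group. The only mild subtlety is in $(c)\Rightarrow (d)$, where one must record that any generator of the maximal cyclic subgroup $N[x]$ automatically belongs to $M(G)$, at which point Lemma~\ref{l:easy} closes the loop.
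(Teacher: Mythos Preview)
Your proposal is correct and follows essentially the same cycle of implications $(a)\Rightarrow(b)\Rightarrow(c)\Rightarrow(d)\Rightarrow(e)\Rightarrow(a)$ as the paper, with the same arguments at each step (including the appeal to Lemma~\ref{l:easy} for $(c)\Rightarrow(d)$). There is nothing to add.
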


\begin{proof}
$(a)\Rightarrow (b)$.\quad Let $x\in \mathrm{sl}(\mathcal{E}(G))$. Then $N[x]$ is a clique in $\mathcal{E}(G)$. Suppose that $\langle y_1\rangle$ and $\langle y_2\rangle$ are maximal cyclic subgroups of $G$, both containing $x$. Then $y_1,y_2\in N[x]$ and, since $N[x]$ is a clique, $y_1, y_2$ are adjacent in $\mathcal{E}(G)$. Thus $\langle y_1,y_2\rangle$ is cyclic. The maximality of $\langle y_1\rangle$ and $\langle y_2\rangle$ then implies that 
$\langle y_1\rangle=\langle y_2\rangle$.

\smallskip\noindent
$(b)\Rightarrow (c)$.\quad Assume that $\langle y\rangle$ is the unique maximal cyclic subgroup of $G$ containing $x$.  Then $\langle x,y'\rangle$ is cyclic for each $y'\in \langle y\rangle$, and hence $\langle y\rangle\subseteq N[x]$. On the other hand, if $z\in N[x]$, then $\langle z,x\rangle$ is cyclic, and the uniqueness of  $\langle y\rangle$ implies that $\langle z,x\rangle\leq \langle y\rangle$. 
Hence $N[x]\subseteq \langle y\rangle$, and so equality holds, proving $(c)$.

\smallskip\noindent
$(c)\Rightarrow (d)$.\quad Assume that $N[x]= \langle y\rangle$ is a maximal cyclic subgroup of $G$. Then  $y\in M(G)$ by \eqref{e:M}, and so, by Lemma \ref{l:easy}, $N[y]=\langle y \rangle=N[x]$.

\smallskip\noindent
$(d)\Rightarrow (e)$.\quad  This holds trivially.

\smallskip\noindent
$(e) \Rightarrow (a)$.\quad Assume that $N[x]$ is a cyclic subgroup of $G$. Then any two elements of $N[x]$ generate a cyclic subgroup, so $N[x]$ is a clique of $\mathcal{E}(G)$ and hence $x$ is a simplicial element of $\mathcal{E}(G)$.
\end{proof}

By Lemma~\ref{l:simplicial}, the identity element $1$ is simplicial if and only if $G$ is cyclic. Also if $G$ is cyclic then every vertex is simplicial. Further, $\textrm{sl}(\mathcal{E}(G))=G\setminus\{1\}$  if and only if  $G$ is not cyclic and every element in $G\setminus\{1\}$ is contained in a unique maximal cyclic subgroup of $G$. This holds, for example,  if $G\cong C_p^k,$ with $p$ prime and $k\geq 2$, and if $G=A_5$. This interesting situation will be analyzed in detail in Theorem \ref{p:partition}. We note here one further consequence.

\begin{corollary}\label{c:estremiG} 
Let $G$ be a  group. Then $M(G)\subseteq \mathrm{sl}(\mathcal{E}(G))$.
\end{corollary}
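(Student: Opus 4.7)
The plan is to show that any $x\in M(G)$ satisfies condition $(e)$ of Lemma~\ref{l:simplicial}, which will immediately give $x\in \mathrm{sl}(\mathcal{E}(G))$ by the equivalence $(e)\Leftrightarrow (a)$ already established there.

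First, I would fix $x\in M(G)$. By definition \eqref{e:M}, the subgroup $\langle x\rangle$ is a maximal cyclic subgroup of $G$. Applying Lemma~\ref{l:easy} to $x$, I obtain that $N[x]=\langle x\rangle$. In particular, $N[x]$ is a cyclic subgroup of $G$, so condition $(e)$ of Lemma~\ref{l:simplicial} holds for $x$.

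Finally, invoking the implication $(e)\Rightarrow (a)$ of Lemma~\ref{l:simplicial} yields $x\in \mathrm{sl}(\mathcal{E}(G))$, completing the proof. There is no substantive obstacle here: the statement is essentially a direct combination of Lemma~\ref{l:easy} (which identifies maximal elements by the shape of their closed neighbourhood) with the characterisation of simplicial vertices of $\mathcal{E}(G)$ provided by Lemma~\ref{l:simplicial}.
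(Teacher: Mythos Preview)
Your proof is correct and follows essentially the same route as the paper: both use Lemma~\ref{l:easy} to get $N[x]=\langle x\rangle$ and then feed this into Lemma~\ref{l:simplicial}. The only cosmetic difference is that the paper invokes condition~$(c)$ (since $\langle x\rangle$ is a \emph{maximal} cyclic subgroup) while you invoke the weaker condition~$(e)$; either works.
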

%
%
\begin{proof} Let $x\in M(G)$. Then, by Lemma~\ref{l:easy},  $N[x]= \langle x\rangle,$ so condition $(c)$ in Lemma~\ref{l:simplicial} holds, and hence also condition $(a)$ in Lemma~\ref{l:simplicial} holds, that is, $x\in \mathrm{sl}(\mathcal{E}(G)).$ 
\end{proof}

In particular, $\mathrm{sl}(\mathcal{E}(G))\neq \varnothing$ for all groups $G$.  However, in general the set $M(G)$ is not 
equal to sl$(\mathcal{E}(G))$. For 
example, if $G$ is cyclic and nontrivial,  
then $G=\mathrm{sl}(\mathcal{E}(G))$ by Lemma~\ref{l:simplicial}, while $M(G)$ consists of the generators of the maximal cyclic subgroups, and is a proper subset of $G$. Also, if $G$ is non-cyclic, then the subset $M(G)$ can be rather smaller than 
$\mathrm{sl}(\mathcal{E}(G))$. For instance, in
the abelian group $G=C_2\times C_2\times C_3$ the 
set $M(G)$ has cardinality $6$ since the 
maximal cyclic subgroups of $G$ are the three 
subgroups of order $6$, while the set $\mathrm{sl}(\mathcal{E}(G))$  has cardinality $9$
because, by Lemma \ref{l:simplicial}, it contains also the three involutions of $G$.
Indeed, the unique maximal cyclic 
subgroup of $G$ containing an involution $y$ is the one generated by $y$ 
and by the unique subgroup of $G$ of order $3$. 

Observe that a very manageable non-empty subset of maximal elements in a group $G$ is the one comprising elements with order maximal  with respect to  divisibility. 

We also observe that if $x\in G$ has cyclic centraliser $C_G(x)$, then $x$ is simplicial. Indeed in this case $C_G(x)$ is the only maximal cyclic subgroup of $G$ containing $x$, and hence $x\in \mathrm{sl}(\mathcal{E}(G))$ by Lemma \ref{l:simplicial}.

\subsection{The role of the cycliciser for the enhanced power graph.} We explore the link between 
$\mathcal{E}(G)$ and $\mathcal{E}(G/C)$, for a  group $G$, where $C=\Cyc(G)$ with $\Cyc(G)$  as in \eqref{e:cyc}. 
We often use the `bar notation', that is, we denote the quotient group $G/C$ by $\overline{G}$, and a similar notation is used to denote elements and  subgroups of $\overline{G}$. 
We also denote by $\pi$ the natural projection $\pi:G\to \overline{G}.$ 

\begin{lemma}\label{l:E(G/Cyc):1}
Let $G$ be a group and let $g,h\in G$. Then the subgroup 
$\seq{g,h}$ is cyclic if and only if  
$\seq{gc_1,hc_2}$ is cyclic for all $c_1,c_2\in \Cyc(G)$.
\end{lemma}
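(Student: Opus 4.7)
The backward direction is immediate: taking $c_1 = c_2 = 1 \in \mathrm{Cyc}(G)$ in the hypothesis gives that $\langle g, h\rangle$ is cyclic. So the content of the lemma is in the forward direction.

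For the forward direction, my plan is to replace the pair of generators $g, h$ by a single generator and then absorb $c_1, c_2$ into an enveloping cyclic subgroup. Concretely, suppose $\langle g, h\rangle = \langle z\rangle$ for some $z \in G$, so that $g = z^a$ and $h = z^b$ for integers $a, b$. I would like to show that there is a cyclic subgroup of $G$ containing $z$ and every element of $\mathrm{Cyc}(G)$; once this is established, $gc_1 = z^a c_1$ and $hc_2 = z^b c_2$ both lie in such a cyclic subgroup, and the lemma follows since any subgroup of a cyclic group is cyclic.

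The key step, and the only one requiring thought, is to find this enveloping cyclic subgroup. Here I would invoke Lemma~\ref{l:cyc}$(ii)$, which tells us that $\mathrm{Cyc}(G)$ is itself a cyclic subgroup, say $\mathrm{Cyc}(G) = \langle c^\ast\rangle$. Since $c^\ast \in \mathrm{Cyc}(G)$, the defining property \eqref{e:cyc} of the cycliciser guarantees that $\langle z, c^\ast\rangle$ is cyclic. Then
\[
\langle z, \mathrm{Cyc}(G)\rangle = \langle z, c^\ast\rangle
\]
is cyclic and contains $g, h, c_1, c_2$ (and hence $gc_1, hc_2$) for arbitrary $c_1, c_2 \in \mathrm{Cyc}(G)$, which finishes the proof.

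What makes the argument work cleanly is the strong structural fact that $\mathrm{Cyc}(G)$ is not merely a subgroup but is cyclic and central; absent this, one would have to worry about whether $\langle z, c_1\rangle$ and $\langle z, c_2\rangle$ can be simultaneously enlarged to a single cyclic subgroup. There is no real obstacle since all the heavy lifting has already been done in Lemma~\ref{l:cyc}.
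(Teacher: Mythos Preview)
Your proof is correct. The paper's argument is essentially the same idea—find a single cyclic subgroup containing $g,h,c_1,c_2$—but constructs the enveloping subgroup slightly differently: it picks a maximal cyclic subgroup $\langle x\rangle \supseteq \langle g,h\rangle$ and then invokes Lemma~\ref{l:cyc}$(ii)$ in the form $\Cyc(G)=\bigcap_{H\in\mathcal{M}(G)}H$ to conclude $\Cyc(G)\leq \langle x\rangle$, whence $\langle gc_1,hc_2\rangle\leq\langle x\rangle$. You instead use Lemma~\ref{l:cyc}$(ii)$ for the fact that $\Cyc(G)=\langle c^\ast\rangle$ is cyclic, and then appeal to the defining property of $\Cyc(G)$ to get $\langle z,c^\ast\rangle$ cyclic. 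Both routes are one line long and equally valid; yours avoids the (harmless) passage to a maximal cyclic subgroup, while the paper's avoids needing to name a generator of $\Cyc(G)$.
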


\begin{proof}
Let $C:= \Cyc(G)$. Assume first that $\seq{g,h}$ is cyclic, and let $c_1,c_2\in C$. 
Let $x\in M(G)$ be such that $\seq{g,h}\leq \seq{x}$. Then $C\leq \seq{x}$, by Lemma \ref{l:cyc}$(ii)$, and therefore 
$$\seq{gc_1,hc_2}\leq \seq{g,h,c_1,c_2}\leq \seq{g,h,x}\leq \seq{x},$$
so $\seq{gc_1,hc_2}$ is cyclic. The reverse implication is obvious.
\end{proof}

\begin{lemma}\label{l:E(G/Cyc):2}
Let $G$ be a group and let $C=\Cyc(G)$ and $\overline{G}=G/C$. Then 
for all $x,y\in G$ such that $\overline{x}\neq \overline{y}$, 
\[
\{x,y\}\in E(\mathcal{E}(G)) \quad \mbox{if and only if } \quad 
\{\overline{x},\overline{y}\}\in E(\mathcal{E}(\overline{G})).
\]
\end{lemma}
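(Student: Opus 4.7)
The plan is to prove the two directions separately, exploiting the fact that $C=\Cyc(G)$ is a central cyclic subgroup of $G$ (by Lemma~\ref{l:cyc}$(ii)$) and that we have just established Lemma~\ref{l:E(G/Cyc):1}, which tells us that generating a cyclic subgroup is invariant under multiplication of the generators by elements of $C$.

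For the forward implication, suppose $\{x,y\}\in E(\mathcal{E}(G))$, so that $\langle x,y\rangle$ is cyclic. Then $\langle\overline{x},\overline{y}\rangle=\pi(\langle x,y\rangle)$ is a homomorphic image of a cyclic group and hence is cyclic; since we assume $\overline{x}\neq\overline{y}$, this gives the edge $\{\overline{x},\overline{y}\}\in E(\mathcal{E}(\overline{G}))$. This direction is essentially a one-line observation.

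For the reverse implication, suppose $\{\overline{x},\overline{y}\}\in E(\mathcal{E}(\overline{G}))$. First I would choose a generator: write $\langle\overline{x},\overline{y}\rangle=\langle\overline{z}\rangle$ for some $z\in G$. Then the full preimage of $\langle\overline{z}\rangle$ under $\pi$ is the subgroup $\langle z\rangle C$, and it contains both $x$ and $y$. Hence there exist integers $a,b$ and elements $c_1,c_2\in C$ with $x=z^{a}c_1$ and $y=z^{b}c_2$. Since $\langle z^{a},z^{b}\rangle\leq\langle z\rangle$ is cyclic, Lemma~\ref{l:E(G/Cyc):1} applies and yields that $\langle x,y\rangle=\langle z^{a}c_1,z^{b}c_2\rangle$ is cyclic. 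The hypothesis $\overline{x}\neq\overline{y}$ already forces $x\neq y$, so $\{x,y\}\in E(\mathcal{E}(G))$, as required.

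There is no real obstacle here: the only subtle point is the reverse implication, where one needs to know that a cyclic quotient of $G/C$ lifts to a cyclic subgroup of $G$ after adjusting generators by elements of $C$. That subtlety has been isolated into Lemma~\ref{l:E(G/Cyc):1}, which crucially uses that every element of $C$ lies in every maximal cyclic subgroup of $G$. So the present lemma is essentially a direct corollary of Lemma~\ref{l:E(G/Cyc):1} together with the standard fact that quotients of cyclic groups are cyclic.
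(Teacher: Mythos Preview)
Your proof is correct and follows essentially the same approach as the paper. The only cosmetic difference is that the paper, for the reverse implication, proves the slightly more general claim ``if $\overline{D}$ is cyclic then $D$ is cyclic'' directly from Lemma~\ref{l:cyc}$(ii)$ (picking a maximal cyclic subgroup containing a preimage of a generator and noting it also contains $C$), rather than invoking Lemma~\ref{l:E(G/Cyc):1} as you do; but the underlying mechanism is identical.
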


\begin{proof}
We first show that if $D\leq G$ is such that $\overline{D}$ is a cyclic subgroup of $\overline{G}$, then $D$ is cyclic.
Assume that $\overline{D}=\seq{\overline{g}}$ for some $\overline{g}\in \overline{G}$. Then $D\leq \seq{g}C$. Let $h\in M(G)$ be such that $\seq{g}\leq \seq{h}$. Then $C\leq \seq{h}$, by Lemma \ref{l:cyc}$(ii)$, and hence $D\leq  \seq{h}$, so $D$ is  cyclic.

Now let $x,y\in G$ such that 
$\overline{x}\neq \overline{y}$, and let  
$D=\langle x,y\rangle$ and $\overline{D} =
\langle \overline{x},\overline{y}\rangle$. 
If $D$ is cyclic then clearly $\overline{D}$ 
is cyclic, while the converse holds by the  
previous paragraph.
\end{proof}
Considering, for a group $G$, the graph $\widehat{\mathcal{E}}(G)$ obtained from $\mathcal{E}(G)$ by adding a loop at each vertex,   Lemma 3.10 shows that the natural projection map $G \to C/\Cyc(G)$ induces a surjective  graph homomorphism $\widehat{\mathcal{E}}(G) \to \widehat{\mathcal{E}}(G/\Cyc(G))$.

\noindent
Recall now that, given a graph $\Gamma=(V,E)$, two
distinct vertices $x,y\in V$ such that $N_\Gamma[x]=N_\Gamma[y]$ are called {\bf closed twins}.
In particular, if $x, y\in V$ are closed twins, then $\{x,y\}\in E$.
The following result is in the spirit of \cite[Lemma 5]{Ma24}.

\begin{prop}\label{p:E(G/Cyc)}
Let $\Delta $ be a graph without closed twins and let $G$ be a group. If $\mathcal{E}(G/\Cyc(G))$ 
is $\Delta$-free then also $\mathcal{E}(G)$ is $\Delta$-free.
\end{prop}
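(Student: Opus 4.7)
The plan is to argue by contrapositive: assuming that $\mathcal{E}(G)$ contains $\Delta$ as an induced subgraph, we will produce a copy of $\Delta$ as an induced subgraph of $\mathcal{E}(\overline{G})$, where $\overline{G}=G/\Cyc(G)$ and $\pi\colon G\to\overline{G}$ is the natural projection. The key idea is that the projection $\pi$ behaves very nicely on subsets $S\subseteq G$ with no closed twins in $\mathcal{E}(G)[S]$.

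The first step is to observe the following crucial fact: if $x,y\in G$ are distinct and satisfy $\overline{x}=\overline{y}$, then $x$ and $y$ are closed twins in $\mathcal{E}(G)$. Indeed, writing $y=xc$ with $c\in\Cyc(G)$, and applying Lemma~\ref{l:E(G/Cyc):1} (with the trivial decomposition $x=x\cdot 1$, $y=x\cdot c$), one sees that for every $z\in G$, the subgroup $\langle x,z\rangle$ is cyclic if and only if $\langle y,z\rangle=\langle xc,z\rangle$ is cyclic. Moreover, since $c\in\Cyc(G)$ we have $\langle x,y\rangle=\langle x,c\rangle$ cyclic, so $x\in N[y]$ and $y\in N[x]$. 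Hence $N[x]=N[y]$.

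The second step is to exploit the hypothesis that $\Delta$ has no closed twins. Suppose $S\subseteq G$ with $\mathcal{E}(G)[S]\cong\Delta$. If $u,v\in S$ were distinct closed twins in $\mathcal{E}(G)$, then since for any induced subgraph $N_{\mathcal{E}(G)[S]}[u]=N_{\mathcal{E}(G)}[u]\cap S$, they would be closed twins also in $\mathcal{E}(G)[S]\cong\Delta$, contradicting the assumption on $\Delta$. Combining this with the first step, no two distinct elements of $S$ can map to the same element of $\overline{G}$, so $\pi|_S$ is injective.

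The final step is then immediate: by Lemma~\ref{l:E(G/Cyc):2}, since all pairs $x\neq y$ in $S$ satisfy $\overline{x}\neq\overline{y}$, we have $\{x,y\}\in E(\mathcal{E}(G))$ if and only if $\{\overline{x},\overline{y}\}\in E(\mathcal{E}(\overline{G}))$. Thus $\pi|_S$ is a graph isomorphism from $\mathcal{E}(G)[S]$ onto $\mathcal{E}(\overline{G})[\pi(S)]$, producing an induced copy of $\Delta$ in $\mathcal{E}(\overline{G})$, contrary to the hypothesis. There is really no serious obstacle here; the only mildly non-trivial point is recognising that ``same fibre over $\overline{G}$'' implies ``closed twins'', which is precisely where Lemma~\ref{l:E(G/Cyc):1} is invoked.
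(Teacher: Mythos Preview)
Your proof is correct and follows essentially the same approach as the paper's: both argue by contradiction/contrapositive, use Lemma~\ref{l:E(G/Cyc):1} to show that elements in the same $\Cyc(G)$-coset are closed twins in $\mathcal{E}(G)$ (and hence in any induced subgraph), deduce that $\pi$ is injective on the copy of $\Delta$, and then invoke Lemma~\ref{l:E(G/Cyc):2} to transport the induced $\Delta$ to $\mathcal{E}(\overline{G})$. The only cosmetic difference is that you first establish the closed-twin property globally in $\mathcal{E}(G)$ and then restrict, whereas the paper works directly inside $\Gamma[X]$; the content is the same.
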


\begin{proof}
Let $C=\Cyc(G)$, $\overline{G}=G/C$, $\Gamma=\mathcal{E}(G)$ and $\overline{\Gamma}=\mathcal{E}(\overline{G})$, and let $\pi:G\to \overline{G}$ be the natural projection.
Assume that $\overline{\Gamma}$ is $\Delta$-free, and assume  to the contrary that there exists $X\subseteq G$ such that the induced subgraph $\Gamma[X]\cong \Delta$. 
 
\noindent\emph{Claim: $X\cap xC = \{x\}$ for all $x\in X$.}\quad 
Suppose to the contrary that  $x\in X$ and $X\cap xC \neq \{x\}.$ Then there exists $c\in C\setminus\{1\}$ such that $xc\in X$. Now, $\langle x,xc\rangle=\langle x,c\rangle$ and this subgroup is cyclic by \eqref{e:cyc}. Also $x\neq xc$, and hence $\{x,xc\}$ is an edge of $\Gamma$. Further, for every $y\in G\setminus \{ xc, x\}$, we have $y\in N_\Gamma[x]$ if and only if $\langle x,y\rangle$ is cyclic, and by Lemma~\ref{l:E(G/Cyc):1} this holds if and only if  $\langle xc,y\rangle$ is cyclic, which is equivalent to $y\in N_\Gamma[xc]$. Thus 
\[
 \{x\} \cup N_\Gamma(x) = N_\Gamma[x] =  N_\Gamma[xc] = \{xc\}\cup N_\Gamma(xc).
\]
This implies, since $x, xc\in X$, that $N_{\Gamma[X]}[x] = N_{\Gamma[X]}[xc],$
and hence $x, xc$ are closed twins in 
$\Gamma[X]\cong \Delta$, which is a contradiction, proving the Claim.
\smallskip

It follows immediately from the Claim that the restriction  $\pi_{|X}$ is one-to-one. Hence, by Lemma~\ref{l:E(G/Cyc):2}, $\pi$ defines a graph isomorphism from $\Gamma[X]$ to $\overline{\Gamma}[\pi(X)].$ Thus $\overline{\Gamma}[\pi(X)]\cong \Gamma[X]\cong \Delta$, which contradicts the assumption that $\overline{\Gamma}$ is $\Delta$-free, and completes the proof.
\end{proof}

\begin{corollary}\label{c:E(G/Cyc)} Let $G$ be a group and let $n\geq 4$.
If $\mathcal{E}(G/\Cyc(G))$ is a cograph {\rm[}a chordal graph, a $C_n$-free graph {\rm]}, then also $\mathcal{E}(G)$ is a cograph {\rm[}a chordal graph, a $C_n$-free graph{\rm]}.     
\end{corollary}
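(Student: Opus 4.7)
The plan is to obtain Corollary 3.12 as a direct application of Proposition 3.11 to the three relevant forbidden subgraphs, namely $P_4$ (for the cograph case), $C_n$ for a fixed $n\geq 4$ (for the $C_n$-free case), and the whole family $\{C_n:n\geq 4\}$ (for the chordal case, by doing the $C_n$-free argument for every $n\ge 4$). The only nontrivial input, beyond Proposition 3.11 itself, is the verification that $P_4$ and $C_n$ (with $n\geq 4$) contain no closed twins, i.e.\ no two distinct vertices with the same closed neighbourhood. Once this is verified, the three statements fall out simultaneously.

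First I would check the closed-twin condition for $P_4=(x_1,x_2,x_3,x_4)$. A quick computation gives $N[x_1]=\{x_1,x_2\}$, $N[x_2]=\{x_1,x_2,x_3\}$, $N[x_3]=\{x_2,x_3,x_4\}$ and $N[x_4]=\{x_3,x_4\}$; these four sets are pairwise distinct, so $P_4$ admits no closed twins. Next, for a cycle $C_n=(x_1,\dots,x_n,x_1)$ with $n\geq 4$, one has $N[x_i]=\{x_{i-1},x_i,x_{i+1}\}$ with indices modulo $n$. If $N[x_i]=N[x_j]$ with $i\neq j$, then since $x_i$ is the unique element of $N[x_i]$ adjacent to the other two (this uses $n\geq 4$, so that $x_{i-1}$ and $x_{i+1}$ are not adjacent in $C_n$), we would get $x_i=x_j$, a contradiction. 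Hence $C_n$ also has no closed twins when $n\geq 4$.

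With these observations in hand, the corollary is immediate. If $\mathcal{E}(G/\Cyc(G))$ is $C_n$-free for some fixed $n\geq 4$, then applying Proposition~\ref{p:E(G/Cyc)} with $\Delta=C_n$ (which has no closed twins) yields that $\mathcal{E}(G)$ is $C_n$-free. If $\mathcal{E}(G/\Cyc(G))$ is a cograph, i.e.\ $P_4$-free, then applying Proposition~\ref{p:E(G/Cyc)} with $\Delta=P_4$ (which has no closed twins) shows that $\mathcal{E}(G)$ is $P_4$-free, hence a cograph. Finally, if $\mathcal{E}(G/\Cyc(G))$ is chordal, i.e.\ $C_n$-free for every $n\geq 4$, applying the $C_n$-free case for each such $n$ gives that $\mathcal{E}(G)$ is $\{C_n:n\geq 4\}$-free, i.e.\ chordal.

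There is essentially no main obstacle here: the heavy lifting has already been done in Proposition~\ref{p:E(G/Cyc)}, and the only potential subtlety is the $n\ge 4$ hypothesis in the cycle case, which is exactly what rules out the trivial failure for $C_3=K_3$ (where all three vertices are pairwise closed twins). Since the statement of Corollary~\ref{c:E(G/Cyc)} only refers to $n\geq 4$ and to cographs/chordal graphs, this hypothesis is automatically satisfied, and the proof reduces to the short two-line verification above followed by three applications of Proposition~\ref{p:E(G/Cyc)}.
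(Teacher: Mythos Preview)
Your proof is correct and follows exactly the same approach as the paper: apply Proposition~\ref{p:E(G/Cyc)} with $\Delta=P_4$ and $\Delta=C_n$ ($n\geq 4$), after observing that these graphs have no closed twins. The paper's own proof is a one-liner stating precisely this, so your version is just a more explicit rendering of the same argument.
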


\begin{proof}
This is a consequence of Proposition \ref{p:E(G/Cyc)} and the fact that the graphs $C_n$ and $P_n$ have no closed twins for any $n\geq 4$.    
\end{proof}
\noindent

\section{Chordal and cograph properties of enhanced power graphs}\label{s:ccprops}

We begin this section by proving Theorem~A, establishing that if the enhanced power graph of a  group is a cograph, then it is chordal also.

\medskip\noindent
\emph{Proof of Theorem A.}\quad  Let  $\mathcal{E}(G)=(G,E)$, for a group $G$, and suppose that $\mathcal{E}(G)$  is a cograph.  Assume, for a contradiction, that $\mathcal{E}(G)$  is not chordal, and let $\mathcal{C}$ be an induced $n$-cycle of $\mathcal{E}(G)$, for some $n\geq4$.   If $n\geq 5,$ then any set of four consecutive vertices in $\mathcal{C}$ induces a subgraph of $\mathcal{E}(G)$ isomorphic to $P_4$, which is a contradiction.  Thus $n=4$, and  $\mathcal{C}=(x_1,x_2,x_3,x_4,x_1),$ for pairwise distinct vertices $x_1,x_2,x_3,x_4\in G.$
By Lemma \ref{l:path-simp}\, $(iii),$ no vertex of $\mathcal{C}$ is a simplicial vertex of $\mathcal{E}(G)$, and hence, by 
Corollary~\ref{c:estremiG},  no vertex of $\mathcal{C}$ is maximal. 
Since $\set{x_1,x_4}\in E$, the subgroup $\langle x_1, x_4\rangle$ is cyclic. Let $\langle z\rangle$ be a maximal cyclic subgroup of $G$ containing $\langle x_1, x_4\rangle$. Then $z$ is maximal, and hence  $z\not\in\{x_1,x_2,x_3,x_4\}$. In particular  $|\{z, x_1, x_2, x_3\}|=4$.
Now  $\set{z,x_1}\in E$ since $\langle z, 
x_1\rangle = \langle z\rangle$ is cyclic  and so  
$\gamma=(z,x_1,x_2,x_3)$ is a path in $\mathcal{E}(G).$
We claim that  $\gamma$ is in fact an induced path. First note that $\set{x_1,x_3}\notin E$ since $\mathcal{C}$ is an induced subgraph. Assume now, to the contrary, that $\set{z,x_2}\in E$. Then $\langle z, x_2\rangle$ is cyclic and the maximality of $\langle z\rangle$ implies that $x_2\in \langle z\rangle.$ Since also $ x_4\in \langle z\rangle,$ we deduce that $\langle x_2, x_4\rangle$ is cyclic, and hence $\set{x_2,x_4} \in E$, contradicting the fact that $\mathcal{C}$ is an induced subgraph.
Assume next that $\set{z,x_3}\in E$. Then, $\langle z, x_3\rangle$ is cyclic and the maximality of $\langle z\rangle$ implies $x_3 \in \langle z\rangle.$ Since also $x_1\in \langle z\rangle$ we deduce that $\set{x_3,x_1} \in E,$ contradicting the fact that $\mathcal{C}$ is induced. 
Thus we have proved the claim that $\gamma$ is an induced $4$-path $P_4$, which contradicts the fact that $\mathcal{E}(G)$ is a cograph. This completes the proof. \quad \qed

\bigskip

A natural question one might ask is whether the implication in Theorem A holds for other graphs associated with  groups, in particular, whether it holds for power graphs. 
This holds trivially for EPPO groups since, as noted in Section~\ref{s:peg}, these are the groups for which the power graph and the enhanced power graph coincide. Also, the list of simple groups for which the power graph is a cograph was determined in \cite[Theorem 1.3 and Theorem 5.1]{CA}, and is a sublist  of the list of simple groups for which the power graph is  chordal  \cite[Theorem 1.1]{BRA}. These special cases suggest the possibility that the implication may hold for all finite groups.

\begin{problem} Determine all finite groups $G$ such that $\mathcal{P}(G)$ is both a cograph and a chordal graph. Is it true that, if $\mathcal{P}(G)$ is a cograph, then $\mathcal{P}(G)$ is  chordal?
\end{problem}

\subsection{Quasi-threshold graphs and the group theoretical characterisation of groups whose enhanced power graph is a cograph.}\label{s:threshold}

As mentioned in the introduction, Theorem A was for us  
an unexpected result, since  from the graph 
theoretic point of view the 
class of graphs which are both cographs 
and chordal is considered to be quite 
restricted, namely it is the class of {\bf quasi-threshold 
graphs} (\cite[Theorem 3]{Ya}), so we have the following consequence of Theorem A. 

\begin{corollary}\label{thresh} Let $G$ be a group. The following facts are equivalent:
\begin{itemize}
    \item [$(a)$] $\mathcal{E}(G)$ is a cograph;
    \item[$(b)$] $\mathcal{E}(G)$ is a quasi-threshold graph.
\end{itemize}
\end{corollary}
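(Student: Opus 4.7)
The plan is to derive the corollary immediately from Theorem~A together with the graph-theoretic characterisation of quasi-threshold graphs. Recall that, as stated just before the corollary and cited from \cite[Theorem 3]{Ya}, the class of quasi-threshold graphs coincides with the class of graphs that are simultaneously cographs and chordal. With this fact in hand, the equivalence in the corollary becomes a one-line deduction.

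For the implication $(b) \Rightarrow (a)$, there is nothing to do: a quasi-threshold graph is by definition a cograph (and chordal), so in particular $\mathcal{E}(G)$ is $P_4$-free, i.e.\ a cograph.

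For the converse $(a)\Rightarrow(b)$, suppose $\mathcal{E}(G)$ is a cograph. Then Theorem~A gives that $\mathcal{E}(G)$ is also chordal. Being simultaneously a cograph and chordal, $\mathcal{E}(G)$ is quasi-threshold by \cite[Theorem 3]{Ya}.

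There is no real obstacle here; the entire content of the corollary is carried by Theorem~A, which supplies the non-obvious direction (cograph $\Rightarrow$ chordal) that fails for graphs in general but holds for enhanced power graphs of finite groups. The graph-theoretic equivalence between quasi-threshold graphs and $\{P_4,C_n \,(n\geq 4)\}$-free graphs is then the mechanism that packages this together.
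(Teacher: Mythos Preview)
Your proof is correct and matches the paper's approach exactly: the corollary is stated as an immediate consequence of Theorem~A together with the characterisation from \cite[Theorem~3]{Ya} that quasi-threshold graphs are precisely the graphs that are both cographs and chordal, and the paper gives no further argument beyond this.
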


Among the quasi-threshold graphs are the {\bf threshold graphs} defined as  the $\{P_4, C_4, 2K_2\}$-free graphs (see \cite{Maha}), where $2K_2$ denotes  the graph with $V(2K_2)=[4]$ and  
$E(2K_2)=\{\{1,2\}, \{3,4\} \}$. As mentioned Section~\ref{s:intro},   the  groups $G$ 
for which $\mathcal{E}(G)$ is a threshold graph are the 
cyclic groups, the dihedral groups and the 
elementary abelian $2$-groups (\cite[Theorem 1]{Ma24}), while  the class of  groups  $G$ 
for which $\mathcal{E}(G)$ is a quasi-threshold graph contains two infinite families of finite nonabelian simple groups (Theorem B). We obtain a characterisation in terms of properties of maximal cyclic subgroups of a group $G$ for $\mathcal{E}(G)$ to be a quasi-threshold graph (Proposition~\ref{p:W}). For this we use the following notation, for a graph $\Gamma$:
\begin{itemize}
    \item $m(\Gamma)$  denotes the number of maximal cliques of  $\Gamma$, and 
    \item $\alpha(\Gamma)$ denotes the maximum size of a stable set of $\Gamma$, that is, a set of pairwise non-adjacent vertices.
\end{itemize}

\noindent
Since each maximal clique contains at most one vertex of a stable set, the inequality $\alpha(\Gamma)\leq m(\Gamma)$ holds for every graph $\Gamma$. It was shown in \cite[Theorem 3]{Ya} that $\Gamma$ is a quasi-threshold graph if and only if $\alpha(\Gamma')= m(\Gamma')$ for every subgraph $\Gamma'\leq \Gamma$.
Another more local characterisation was given in \cite[Theorem 3]{Ya}: $\Gamma$ is a quasi-threshold graph if and only if, for each edge $\{x,y\}\in E(\Gamma)$, either $N_{\Gamma}[x]\subseteq N_{\Gamma}[y]$ or $N_{\Gamma}[y]\subseteq N_{\Gamma}[x]$.

We explore the consequences of these facts for the enhanced power graph of a  group $G$. Observe first that $m(\mathcal{E}(G))=|\mathcal{M}(G)|$, by Lemma \ref{alipur2}. Moreover, a vertex-subset consisting of one generator of each of the maximal cyclic subgroups is clearly a stable set of $\mathcal{E}(G)$, and thus $\alpha(\mathcal{E}(G))\geq m(\mathcal{E}(G))$. Thus, by the inequality noted in the previous paragraph, we have $\alpha(\mathcal{E}(G))=m(\mathcal{E}(G))$. By the same argument, for every subgroup $H\leq G$, we have $\alpha(\mathcal{E}(H))=m(\mathcal{E}(H))$.  Thus,  the condition  characterising $\mathcal{E}(G)$ quasi-threshold, in the first characterisation in the previous paragraph, is  satisfied for the subgraphs induced by subgroups of $G$. This discussion therefore yields the following proposition.

\begin{prop}\label{enh-thresh} Let $G$ be a  group. Then the following are equivalent:
\begin{itemize}
    \item [$(a)$] $\mathcal{E}(G)$ is a cograph;
    \item[$(b)$] $\alpha(\mathcal{E}(G)[X])=m(\mathcal{E}(G)[X])$ for all $X\subseteq G$;
    \item[$(c)$] $\alpha(\mathcal{E}(G)[X])=m(\mathcal{E}(G)[X])$ for all $X\subseteq G$ such that $X$ is not a subgroup;
    \item[$(d)$] for all $x,y\in G$, if $\seq{x,y}$ is cyclic then either 
    $$N[x]=\{z\in G: \seq{z,x} \hbox{ is cyclic}\}\subseteq \{z\in G: \seq{z,y} \hbox{is cyclic}\}=N[y]$$
or 
    $$N[y]=\{z\in G: \seq{z,y} \hbox{ is cyclic}\}\subseteq \{z\in G: \seq{z,x} \hbox{is cyclic}\}=N[x].$$
\end{itemize}
\end{prop}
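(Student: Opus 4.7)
The plan is to derive all four equivalences by chaining together Corollary~\ref{thresh}, the two characterisations of quasi-threshold graphs from \cite[Theorem 3]{Ya} recorded in the paragraph preceding the proposition, and the already-observed identity $\alpha(\mathcal{E}(H))=m(\mathcal{E}(H))=|\mathcal{M}(H)|$ for every subgroup $H\leq G$. All the substantive groundwork has been laid in the preceding discussion, so the proof is essentially organisational.

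First I would prove (a)$\Leftrightarrow$(b): by Corollary~\ref{thresh}, (a) is equivalent to $\mathcal{E}(G)$ being quasi-threshold, which by the first Yan criterion is equivalent to $\alpha(\Gamma')=m(\Gamma')$ for every induced subgraph $\Gamma'\leq \mathcal{E}(G)$; since the induced subgraphs of $\mathcal{E}(G)$ are precisely the subgraphs $\mathcal{E}(G)[X]$ for $X\subseteq G$, this is condition (b). For (b)$\Leftrightarrow$(c), the implication (b)$\Rightarrow$(c) is trivial, and for the converse I would observe that whenever $X=H$ is a subgroup of $G$, Lemma~\ref{l:Ma24_Lem1} gives $\mathcal{E}(G)[H]=\mathcal{E}(H)$, so by the observation just recalled $\alpha(\mathcal{E}(G)[H])=m(\mathcal{E}(G)[H])=|\mathcal{M}(H)|$; thus (c), together with this automatic equality on subgroup subsets, already yields (b).

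For (a)$\Leftrightarrow$(d) I would invoke the second Yan criterion, namely that $\mathcal{E}(G)$ is quasi-threshold iff for every edge $\{x,y\}\in E(\mathcal{E}(G))$ either $N[x]\subseteq N[y]$ or $N[y]\subseteq N[x]$. By Definition~\ref{d:EG}(b) the edge condition is equivalent to $x\neq y$ and $\langle x,y\rangle$ cyclic, and the tautological description $N[x]=\{z\in G:\langle z,x\rangle\text{ is cyclic}\}$ (with the analogous formula for $N[y]$) rewrites the neighbourhood containment as the displayed statement in (d); the case $x=y$ is vacuous as both inclusions then hold with equality. No step in this argument presents any real obstacle; the only point requiring modest care is verifying that restricting the equality $\alpha=m$ to non-subgroup subsets in (c) is still enough to imply (b), which is precisely where the automatic identity on subgroups intervenes.
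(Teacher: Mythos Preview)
Your proposal is correct and follows essentially the same approach as the paper: the paper does not give a separate formal proof but states that the proposition follows from the discussion immediately preceding it, which is precisely the chain of equivalences you have written out (Corollary~\ref{thresh}, the two characterisations from \cite[Theorem~3]{Ya}, and the automatic identity $\alpha(\mathcal{E}(H))=m(\mathcal{E}(H))$ on subgroup subsets via Lemma~\ref{alipur2}). Your write-up simply makes that organisational structure explicit.
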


We use Proposition 
\ref{enh-thresh} to prove Proposition~\ref{p:W}, which provides a fundamental and 
manageable group theoretic criterion for the enhanced power graph to be a cograph.

\begin{prop}\label{p:W}
For a group $G$, the following are equivalent.
\begin{enumerate}
    \item[$(a)$]  $\mathcal{E}(G)$ is a cograph;
    \item[$(b)$] 
    for all pairwise distinct maximal cyclic subgroups $A, B, C$ of $G$,  either $A\cap B\leq A\cap C$ or 
$A\cap C\leq A\cap B$.    
\end{enumerate}
\end{prop}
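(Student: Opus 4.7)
The plan is to prove both implications by contrapositive. Two tools will be used repeatedly: Lemma~\ref{alipur2}, which identifies the maximal cliques of $\mathcal{E}(G)$ with the maximal cyclic subgroups of $G$, and Lemma~\ref{l:easy}, which says that a generator $b$ of a maximal cyclic subgroup $B=\langle b\rangle$ satisfies $N[b]=B$. In both directions the idea is to shuttle between an induced $P_4$ in $\mathcal{E}(G)$ and a triple of maximal cyclic subgroups whose pairwise intersections (with one common factor) are incomparable.

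For $(b)\Rightarrow(a)$, I would assume that $\mathcal{E}(G)$ contains an induced $4$-path $(x_1,x_2,x_3,x_4)$ and pick maximal cyclic subgroups $B\supseteq\{x_1,x_2\}$, $A\supseteq\{x_2,x_3\}$, $C\supseteq\{x_3,x_4\}$. Any coincidence among $A,B,C$ would produce a forbidden edge of the $P_4$: for instance $A=B$ would put $x_1$ and $x_3$ in the same cyclic subgroup, and similarly for $A=C$ and $B=C$. Hence $A,B,C$ are pairwise distinct. Since $x_2\in A\cap B$ and $x_3\in A\cap C$, hypothesis $(b)$ forces either $x_2\in C$ (giving the forbidden edge $\{x_2,x_4\}$) or $x_3\in B$ (giving the forbidden edge $\{x_1,x_3\}$).

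For $(a)\Rightarrow(b)$, suppose there exist pairwise distinct maximal cyclic subgroups $A,B,C$ of $G$ with $A\cap B$ and $A\cap C$ incomparable; choose $x\in(A\cap B)\setminus C$, $y\in(A\cap C)\setminus B$, and let $b,c$ be generators of $B,C$ respectively. The core claim is that $(b,x,y,c)$ is an induced $P_4$ in $\mathcal{E}(G)$. The three edges are immediate, since $\{b,x\}\subseteq B$, $\{x,y\}\subseteq A$ and $\{y,c\}\subseteq C$ all lie in cyclic subgroups. For the three non-edges, Lemma~\ref{l:easy} gives $N[b]=B$ and $N[c]=C$, and by construction $y,c\notin B$ and $x\notin C$, so none of $\{b,y\},\{b,c\},\{c,x\}$ is an edge. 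Distinctness of the four vertices follows from the maximality observation that a maximal cyclic subgroup cannot be properly contained in another: for example $b\in A$ would force $B=\langle b\rangle\subseteq A$ and hence $B=A$, contradicting distinctness, and the same type of argument combined with $x\in B\setminus C$ and $y\in A\setminus B$ eliminates the remaining coincidences.

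The argument has no serious obstacle; the closest thing to one is the routine bookkeeping at the end of the forward direction, where one must simultaneously verify all six distinctness conditions and all three non-adjacencies. Each of these verifications collapses to the same observation about maximality of cyclic subgroups, so the proof should be short and entirely formal once the path $(b,x,y,c)$ has been written down.
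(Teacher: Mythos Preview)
Your proof is correct, and it takes a more direct route than the paper's. The paper first establishes Proposition~\ref{enh-thresh} (the quasi-threshold characterisation: $\mathcal{E}(G)$ is a cograph if and only if, for every edge $\{x,y\}$, one of $N[x]\subseteq N[y]$ or $N[y]\subseteq N[x]$ holds) and then proves both directions of Proposition~\ref{p:W} by translating between that neighbourhood condition and the condition on triples of maximal cyclic subgroups. You bypass Proposition~\ref{enh-thresh} entirely: for $(b)\Rightarrow(a)$ you start from an explicit induced $P_4$ and read off the three maximal cyclic subgroups from its edges, and for $(a)\Rightarrow(b)$ you build the $P_4$ by hand as $(b,x,y,c)$ using generators $b,c$ of $B,C$ together with witnesses $x,y$ of the incomparability. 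The only tools you need are the definition of adjacency in $\mathcal{E}(G)$ and Lemma~\ref{l:easy}. This is shorter and self-contained; the paper's approach has the advantage that the intermediate Proposition~\ref{enh-thresh} is of independent interest and links the result to the quasi-threshold literature. All six distinctness checks and all three non-adjacencies in your construction of $(b,x,y,c)$ go through exactly as you describe, using that a generator of a maximal cyclic subgroup cannot lie in a different maximal cyclic subgroup.
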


\begin{proof}
$(a)\Rightarrow (b).$ Assume that  $\mathcal{E}(G)$ is a cograph and 
assume to the contrary that $G$ has three pairwise distinct 
maximal cyclic subgroups $A=\seq{a}, B=\seq{b}$ and $C=\seq{c}$ such  that
$A\cap B\not\leq A\cap C$ and  
$A\cap C\not\leq A\cap B$. 
Let $A\cap B=\seq{x}$ and $A\cap C=\seq{y}$. Note that $A\cap C\not\leq A\cap B$ implies $y\notin B$.  
Now $b\in B\subseteq N[x]$. Suppose that $b\in
N[y]$. Then $\seq{b,y}$ is cyclic and so, by the maximality of $B$, we have $\seq{b,y}=\seq{b}=B$, and hence $y\in B$, which is a contradiction. Therefore $b\in N[x]\setminus  N[y]$. Similarly, using $A\cap B\not\leq A\cap C$,  we deduce that $c\in N[y]\setminus N[x]$. Since $x, y\in A$ the subgroup $\seq{x,y}$ is cyclic, and we have a contradiction by Proposition \ref{enh-thresh}.
\smallskip 

$(b)\Rightarrow (a).$ Suppose that $G$ satisfies condition $(b)$ 
and assume to the contrary that  $\mathcal{E}(G)$ is not a cograph. Then, in particular, $\mathcal{E}(G)$ is not a complete graph, and so, by Lemma~\ref{l:cyc}, $G$ is not cyclic.
By Proposition \ref{enh-thresh}, there exist 
$x,y\in G$ such that $\seq{x,y}$ is cyclic, $N[x]\not\subseteq N[y]$ and $N[y]\not\subseteq N[x]$. Let $A$ be a maximal cyclic subgroup of $G$ containing $\seq{x,y}$. 
Since  $N[x]\not\subseteq N[y]$ we  may choose an element $b\in N[x]\setminus N[y]$. Then  $\seq{b,x}$ is cyclic and $\seq{b,y}$ is not. Let $B$ a maximal cyclic subgroup of $G$ containing $\seq{b,x}$. Then $B$ does not contain $y$, otherwise $\seq{b,y}\leq B$ would be cyclic. as a consequence, we have  $B\neq A$ and $B\cap A$ contains $x$ but  does not contain $y$. Similarly, since  
$N[y]\not\subseteq N[x]$, we may choose an element $c\in N[y]\setminus N[x]$ and $C$ a maximal cyclic subgroup of $G$ containing $\seq{c,y}$. Since $\seq{c,x}$ is not cyclic, $C$ does not contain $x$. In particular, $C\neq A$, $C\neq B$. Moreover, $C\cap A$ contains $y$ and does not contain $x$. Therefore $A\cap B\not\leq A\cap C$ and $A\cap C\not\leq A\cap B$, which is a contradiction.
\end{proof}

The condition in Proposition \ref{p:W}$(b)$ is vacuously true if the group $G$ is cyclic, but  needs to be checked if $G$ is non-cyclic. In this regard we note the following remark.

\begin{remark}\label{r:cog}
A finite non-cyclic group has at least three pairwise distinct maximal cyclic subgroups. Non-cyclic groups with exactly three maximal cyclic subgroups exist, for example, $C_2\times C_2$.
\begin{proof}
    Let $G$ be a non-cyclic finite group and let $M_1=\seq{a}\in\mathcal{M}(G).$ Then $M_1\ne G$ and we choose $g\in G\setminus M_1$. Let $M_2\in \mathcal{M}(G)$ containing $g$. Then $M_2\ne M_1$ as $g\in M_2\setminus M_1$. Also $ga\not\in M_1$ as otherwise $g\in M_1$; and $ga\not\in M_2$ as otherwise $a\in M_2$ and $M_2$ is a cyclic subgroup properly containing $M_1=\seq{a}$, which is a contradiction. Thus $ga\notin M_1\cup M_2$ and so a maximal cyclic subgroup $M_3$ containing $ga$ is distinct from both $M_1$ and $M_2$.
\end{proof}
\end{remark}

We now give a more flexible criterion, in line with 
\cite[Proposition 1]{Ma24}.
It will prove useful 
in dealing with the finite nonabelian simple groups.

\begin{corollary}\label{c:simil_Ma} 
Let $G$ be a non-cyclic  group and let $p$ be a prime. 
Suppose that, for all distinct $H, K\in \mathcal{M}(G),$ there exists $a\geq0$ such that 
$|H\cap K|=p^a$. 
Then the cycliciser $\mathrm{Cyc}(G)$ is a cyclic $p$-group 
and $\mathcal{E}(G)$ is a cograph.
\end{corollary}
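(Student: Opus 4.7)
The plan is to derive both conclusions directly from the hypothesis by combining Lemma \ref{l:cyc}$(ii)$, Remark \ref{r:cog}, and the cograph criterion in Proposition \ref{p:W}$(b)$. The key structural observation driving everything is that for any cyclic group $A$, the set of $p$-subgroups of $A$ forms a chain under inclusion: they all lie inside the unique Sylow $p$-subgroup of $A$, which is itself cyclic, and a cyclic $p$-group has a unique subgroup of each possible order.

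First I would handle the cycliciser. By Remark \ref{r:cog}, since $G$ is non-cyclic, it has at least three, hence at least two, distinct maximal cyclic subgroups $H, K$. By hypothesis $|H\cap K|=p^a$ for some $a\ge 0$. By Lemma \ref{l:cyc}$(ii)$, $\mathrm{Cyc}(G)=\bigcap_{M\in\mathcal{M}(G)}M$, so in particular $\mathrm{Cyc}(G)\le H\cap K$. Therefore $\mathrm{Cyc}(G)$ is a subgroup of a cyclic group of $p$-power order, and hence is itself a cyclic $p$-group, as required.

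Next I would verify the cograph condition via Proposition \ref{p:W}$(b)$: namely, for every triple $A,B,C$ of pairwise distinct maximal cyclic subgroups of $G$, I must show that $A\cap B$ and $A\cap C$ are comparable. By hypothesis, both $A\cap B$ and $A\cap C$ are $p$-subgroups of the cyclic group $A$. By the chain observation above, any two $p$-subgroups of a cyclic group are comparable, so without loss of generality $A\cap B\le A\cap C$ (or the reverse). This is exactly the condition of Proposition \ref{p:W}$(b)$, so $\mathcal{E}(G)$ is a cograph.

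There is no real obstacle here; the statement is essentially a cleanly packaged corollary of Proposition \ref{p:W}. The only thing one needs to be slightly careful about is the boundary case $a=0$: then $H\cap K$ is trivial, which is fine both for proving $\mathrm{Cyc}(G)=1$ (still a trivial, hence cyclic, $p$-group by convention) and for verifying comparability (the trivial subgroup is contained in everything). So the argument goes through uniformly.
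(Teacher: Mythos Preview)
Your proof is correct and follows essentially the same approach as the paper: both parts use Lemma~\ref{l:cyc}$(ii)$ for the cycliciser claim and Proposition~\ref{p:W} for the cograph claim, with the key structural fact being that $p$-subgroups of a cyclic group form a chain. The only cosmetic difference is that the paper phrases the cograph step as a proof by contradiction (assume $\mathcal{E}(G)$ is not a cograph and derive incomparable $A\cap B$, $A\cap C$ from Proposition~\ref{p:W}), whereas you verify condition~$(b)$ of Proposition~\ref{p:W} directly.
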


\begin{proof}
Since $G$ is not cyclic, there exist distinct 
subgroups $A, B\in \mathcal{M}(G)$, and by assumption, 
$A\cap B$ is a cyclic $p$-group. Hence, by 
Lemma~\ref{l:cyc}$(ii)$, $\Cyc(G)$ is a cyclic $p$-group. 

Now assume to the contrary that $\mathcal{E}(G)$ is not a cograph. Then by Proposition \ref{p:W} there exist three distinct maximal cyclic subgroups $A,B,C$ of $G$ such that neither $A\cap B\leq A\cap C$, nor $A\cap C\leq A\cap B$.
On the other hand, by assumption, $A\cap C$ 
and $A\cap B$ are $p$-subgroups of the unique cyclic Sylow $p$-group of $A$, and hence the smaller of $A\cap C$ 
and $A\cap B$ is contained in the 
other. This is a contradiction, and hence  
$\mathcal{E}(G)$ is a cograph.
\end{proof}

\subsection{Enhanced power graphs of nilpotent groups}
We now apply Theorem A 
 to  nilpotent groups obtaining, in  Theorem~\ref{t:Ma24_Thm2},  a 
simplified proof of a result of Ma et al. \cite[Theorem 2]{Ma24}, and also the following new observation which follows from Theorem~\ref{t:Ma24_Thm2}:  

\begin{center}
\emph{For nilpotent groups $G$, $\mathcal{E}(G)$ is chordal if and only if $\mathcal{E}(G)$ is $C_4$-free.}    
\end{center}

\begin{lemma}\label{l:p-groups}
If $G$ is a $p$-group, for some prime 
number $p$, then $\mathcal{E}(G)$ is a cograph.
\end{lemma}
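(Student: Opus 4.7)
The plan is to deduce this immediately from Corollary \ref{c:simil_Ma}, which is tailor-made for exactly this kind of situation. First I would dispose of the trivial case: if $G$ is cyclic, then by Lemma \ref{l:cyc}$(iii)$ the enhanced power graph $\mathcal{E}(G)$ is complete, hence $P_4$-free, hence a cograph.

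Assume therefore that $G$ is a non-cyclic $p$-group. Every element of $G$ has order a power of $p$, and every cyclic subgroup of $G$ is thus a cyclic $p$-group. In particular, given any two distinct maximal cyclic subgroups $H, K \in \mathcal{M}(G)$, both $H$ and $K$ are cyclic $p$-groups, so the intersection $H \cap K$ is a subgroup of the cyclic $p$-group $H$ and hence has order $p^a$ for some integer $a \geq 0$. This is precisely the hypothesis of Corollary \ref{c:simil_Ma}, which then immediately yields that $\mathcal{E}(G)$ is a cograph (and, as a by-product, that $\Cyc(G)$ is a cyclic $p$-group).

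There is essentially no obstacle here: the whole point is that inside any cyclic $p$-group, the lattice of subgroups is a chain, so intersections of maximal cyclic subgroups cannot produce the kind of incomparable configuration ruled out by Proposition \ref{p:W}. The only thing to be careful about is the non-cyclic hypothesis required by Corollary \ref{c:simil_Ma}, which is why the cyclic case must be handled separately by a one-line appeal to Lemma \ref{l:cyc}$(iii)$.
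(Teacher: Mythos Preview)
Your proof is correct and matches the paper's own argument exactly: the paper disposes of the cyclic case as ``clear'' and then states that the non-cyclic case is a direct consequence of Corollary~\ref{c:simil_Ma}. You have simply spelled out the one-line verification of the hypothesis of that corollary, which the paper leaves implicit.
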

\begin{proof}
This is clear if $G$ is cyclic and, when $G$ is not cyclic, the proof is a direct consequence of Corollary \ref{c:simil_Ma}.
%
%
\end{proof}

 \begin{theorem}{\rm \cite[Theorem 2]{Ma24}}\label{t:Ma24_Thm2}
    Let $G$ be a finite nilpotent group. Then the following are equivalent.
    \begin{itemize}
    \item[$(a)$]$\mathcal{E}(G)$ is a cograph;
    \item[$(b)$]$\mathcal{E}(G)$  is a chordal graph;
    \item[$(c)$]$\mathcal{E}(G)$  is
    $C_4$-free;
    \item[$(d)$] $G$ has at most one non-cyclic Sylow subgroup;
    \item[$(e)$] $G= P\times C_n$, with $P$  a $p$-group and $\gcd(n,p)=1$.
    \end{itemize}
\end{theorem}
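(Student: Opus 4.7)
The plan is to establish the cyclic chain $(a) \Rightarrow (b) \Rightarrow (c) \Rightarrow (d) \Rightarrow (e) \Rightarrow (a)$. Theorem~A gives $(a) \Rightarrow (b)$, Definition~\ref{d:cograph/chordal} gives $(b) \Rightarrow (c)$, and the structure theorem for finite nilpotent groups as internal direct products of their Sylow subgroups gives $(d) \Rightarrow (e)$: if $P$ is the unique (possibly trivial) non-cyclic Sylow subgroup, then the product of the remaining cyclic Sylows is itself cyclic of some order $n$ coprime to $|P|$.

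For $(c) \Rightarrow (d)$ I argue by contrapositive. Suppose $G$ has two distinct non-cyclic Sylow subgroups $P$ and $Q$, for primes $p$ and $q$ respectively. By Remark~\ref{r:cog} each of $P,Q$ admits at least two distinct maximal cyclic subgroups, so I pick $\langle a_1\rangle, \langle a_2\rangle \in \mathcal{M}(P)$ and $\langle b_1\rangle, \langle b_2\rangle \in \mathcal{M}(Q)$ with $\langle a_1\rangle\neq\langle a_2\rangle$ and $\langle b_1\rangle\neq\langle b_2\rangle$. In a nilpotent group, elements of coprime order commute, so $\langle a_i, b_j\rangle = \langle a_i\rangle \times \langle b_j\rangle$ is cyclic for all $i,j$. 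On the other hand $\langle a_1, a_2\rangle$ is not cyclic, for otherwise it would lie in some cyclic subgroup of $P$ containing both $\langle a_1\rangle$ and $\langle a_2\rangle$, contradicting their distinctness and maximality; likewise $\langle b_1, b_2\rangle$ is non-cyclic. The four vertices $a_1,b_1,a_2,b_2$ are pairwise distinct (since $p$-elements and $q$-elements share only the identity), so $(a_1, b_1, a_2, b_2, a_1)$ is an induced $4$-cycle of $\mathcal{E}(G)$, contradicting $(c)$.

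For $(e) \Rightarrow (a)$, let $G = P \times C_n$ with $P$ a $p$-group and $\gcd(n,p) = 1$. Since coprime-order elements commute and generate their direct product, every cyclic subgroup of $G$ has the form $\langle x\rangle \times \langle y\rangle$ with $x\in P$, $y\in C_n$, and a direct check yields $\mathcal{M}(G) = \{M \times C_n : M \in \mathcal{M}(P)\}$. For any three distinct members $M_i \times C_n$ ($i=1,2,3$) of $\mathcal{M}(G)$, the intersections $M_1 \cap M_2$ and $M_1 \cap M_3$ are subgroups of the cyclic $p$-group $M_1$, hence linearly ordered under inclusion; the same then holds for $(M_1 \cap M_2) \times C_n$ and $(M_1 \cap M_3) \times C_n$. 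Condition~$(b)$ of Proposition~\ref{p:W} is therefore satisfied, and $\mathcal{E}(G)$ is a cograph.

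The only step demanding a real construction is the induced $C_4$ in $(c) \Rightarrow (d)$: nilpotency is what forces the four required edges through the commuting of coprime-order elements, while the maximality of the chosen cyclic subgroups is what rules out the two diagonals. Once the cycle is closed, the equivalence $(b) \Leftrightarrow (c)$ emerges automatically, with Theorem~A providing the essential shortcut that makes this streamlined proof possible.
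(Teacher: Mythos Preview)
Your proof is correct and follows the same cycle of implications as the paper. The arguments for $(a)\Rightarrow(b)$, $(b)\Rightarrow(c)$, $(c)\Rightarrow(d)$, and $(d)\Rightarrow(e)$ are essentially identical to the paper's (your $(c)\Rightarrow(d)$ is just slightly more explicit in choosing generators of maximal cyclic subgroups).

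The one genuine difference is in $(e)\Rightarrow(a)$. The paper observes that $C_n\leq\Cyc(G)$, so $G/\Cyc(G)$ is a $p$-group; it then invokes Lemma~\ref{l:p-groups} (enhanced power graphs of $p$-groups are cographs) together with Corollary~\ref{c:E(G/Cyc)} to lift the cograph property back to $G$. You instead bypass the quotient entirely and apply Proposition~\ref{p:W} directly to $G$, using the explicit description $\mathcal{M}(G)=\{M\times C_n:M\in\mathcal{M}(P)\}$ and the fact that subgroups of a cyclic $p$-group are linearly ordered. Both routes ultimately rest on Proposition~\ref{p:W} (Lemma~\ref{l:p-groups} is proved via Corollary~\ref{c:simil_Ma}, which in turn uses Proposition~\ref{p:W}), but your approach is more self-contained for this particular step, while the paper's approach illustrates the general reduction-to-quotient machinery of Corollary~\ref{c:E(G/Cyc)}.
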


\begin{proof}
$(a)\Rightarrow (b)$.\quad This follows from Theorem A.

\noindent
$(b)\Rightarrow (c)$.\quad This follows from the definition of a chordal graph.

\noindent
$(c)\Rightarrow (d)$.\quad 
Assume that $\mathcal{E}(G)$  is $C_4$-free. Suppose to the contrary that $p,q$ are distinct primes dividing $|G|$ such that the Sylow $p$-subgroup, and the Sylow $q$-subgroup of $G$ are both non-cyclic. Then there are four elements $x_1,x_2,y_1,y_2$ with $x_1,x_2$ both $p$-elements such that $\langle x_1,x_2\rangle$ is not cyclic, and $y_1,y_2$ both $q$-elements such that $\langle y_1,y_2\rangle$ is not cyclic. It follows that  $(x_1,y_1,x_2,y_2,x_1)$ is an induced $4$-cycle in $\mathcal{E}(G)$, which is a contradiction.

\noindent
$(d)\Rightarrow (e)$.\quad This follows immediately  from the fact that $G$ is the direct product of its Sylow subgroups.

\noindent
$(e)\Rightarrow (a)$.\quad For $G$ as in $(e)$, clearly the subgroup $C_n$ is contained in every maximal cyclic subgroup of $G$, and hence $C_n\leq \Cyc(G)$, by Lemma~\ref{l:cyc}$(ii)$. Thus $G/\Cyc(G)$ is a $p$-group and so, by Lemma \ref{l:p-groups}, $\mathcal{E}(G/\Cyc(G))$ is a cograph. Finally, by Corollary~\ref{c:E(G/Cyc)}, $\mathcal{E}(G)$ is a cograph.
\end{proof}

Note that Theorem~\ref{t:Ma24_Thm2} cannot be generalised too much: for example, if $G$ is a  non-nilpotent supersolvable group, then $\mathcal{E}(G)$ may be a chordal graph but not a cograph (see Corollary \ref{supersolvable}).

\subsection{Induced paths and cycles in enhanced power graphs}  We explore in more detail some properties of induced paths and cycles in enhanced power graphs and power graphs. 
We start with an observation about paths. 
\begin{remark}\label{r:cheryl}  Let $G$ be a 
group and let $\gamma=(x_1,x_2,x_3,x_4)$ be a 
$4$-path in  $\mathcal{E}(G)$. If $\set{x_2,x_3}$ is 
an edge of the power graph $\mathcal{P}(G)
$, then at least one of $\set{x_2,x_4}$ or 
$\set{x_1,x_3}$ is an edge in $\mathcal{E}(G)$. In 
particular, $\gamma$ is not induced.
\end{remark}

\begin{proof} Since $\set{x_1,x_2}$ and $\set{x_3,x_4}$ are edges of $\mathcal{E}(G)$, both $\langle x_1, x_2\rangle$ and $\langle x_3, x_4\rangle$ are cyclic. Assume that $\set{x_2,x_3}$ is an edge of $\mathcal{P}(G)$. If $x_2$ is a power of $x_3$, then $\langle x_2,x_4\rangle\leq \langle x_3,x_4\rangle$, which is is cyclic, and thus $\set{x_2,x_4}$ is an edge in $\mathcal{E}(G)$. Similarly, if $x_3$ is a power of $x_2$, then $\langle x_1,x_3\rangle\leq \langle x_1,x_2\rangle$, which is cyclic, and thus $\set{x_1,x_3}$ is an edge in $\mathcal{E}(G)$.
\end{proof}

\begin{lemma}\label{l:general} Let $G$ be a  group, let $n\geq 4$, let $X=\{x_1,\dots, x_n\}\subseteq G$, and suppose that either $\gamma=(x_1,x_2,\cdots, x_n)$ is an induced $n$-path of $\mathcal{E}(G)$,  or $\mathcal{C}=(x_1,x_2,\cdots, x_n,x_1)$ is an induced $n$-cycle of $\mathcal{E}(G)$.  Then the following hold:
\begin{itemize}
\item[$(i)$] Let $A,B\subseteq [n]$ be 
distinct subsets  with $|A|=|B|=2$, and let 
$ X_A:=\langle x_i:i\in A\rangle$ and $X_B:=\langle x_i:i\in B\rangle$. If $X_A$ and $X_B$ are cyclic, then $X_A\neq X_B.$

\item[$(ii)$] For distinct $i, j\in [n]$, we have $\langle x_i\rangle\neq \langle x_j\rangle$. 

\item[$(iii)$] $X\cap \mathrm{Cyc}(G)=\varnothing$. In particular,  $x_i\neq 1$, for all $i\in [n]$. 

\item[$(iv)$]  If $X$ is the vertex set of the path $\gamma$, then $X\cap \mathrm{sl}(\mathcal{E}(G))\subseteq \{x_1, x_n\};$
if $X$ is the vertex set of the cycle $\mathcal{C}$, then $X\cap \mathrm{sl}(\mathcal{E}(G))=\varnothing.$

\item[$(v)$] For $1<i<n$ in the case of the path $\gamma$, and any $i\in [n]$ in the case of the cycle $\mathcal{C}$, the element $x_i$   belongs to at least two distinct maximal cyclic subgroups of $G$.

\item[$(vi)$] Every internal edge of $\gamma$, and every edge of $\mathcal{C}$, belongs to $E(\mathcal{E}(G)\setminus \mathcal{P}(G)).$

\item[$(vii)$] Let $\{x,y\}$ be an internal edge of $\gamma$, or any edge of $\mathcal{C}$.  Then $|x|$ does not divide $|y|$ and $|y|$ does not divide $|x|$.
\end{itemize}    
\end{lemma}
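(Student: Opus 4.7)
The plan is to address (i)--(vii) in order, each time exploiting the hypothesis that $\gamma$ or $\mathcal{C}$ is induced to manufacture a forbidden chord. The structural tools I would rely on are already assembled: Lemma~\ref{l:path-simp} on simplicial and star vertices in induced paths and cycles, Lemma~\ref{l:cyc}$(i)$ identifying the cycliciser with the set of star vertices of $\mathcal{E}(G)$, and Lemma~\ref{l:simplicial} characterising simplicial vertices of $\mathcal{E}(G)$ as those lying in a unique maximal cyclic subgroup.

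For (i), I would argue by contradiction: if $X_A = X_B$ is cyclic for distinct $2$-subsets $A, B$ of $[n]$, then every pair of vertices in $A \cup B$ generates a cyclic subgroup of $X_A$ and hence is an edge of $\mathcal{E}(G)$. If $|A \cap B| = 1$, this yields a triangle on three vertices of $X$, impossible in an induced $P_n$ or $C_n$ with $n \geq 4$; if $A \cap B = \varnothing$, it yields a $K_4$ on four vertices of $X$, again impossible. Part (ii) follows immediately, since $\langle x_i\rangle = \langle x_j\rangle$ with $i \neq j$ forces $N[x_i] = N[x_j]$, making $x_i$ and $x_j$ closed twins, incompatible with an induced $P_n$ or $C_n$ of length at least $4$.

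For (iii), Lemma~\ref{l:cyc}$(i)$ gives $\mathrm{Cyc}(G) = \mathcal{S}(\mathcal{E}(G))$, and Lemma~\ref{l:path-simp}$(ii)$--$(iii)$ implies that no vertex of $\gamma$ (with $n \geq 4$) or $\mathcal{C}$ is a star vertex of $\mathcal{E}(G)$; since $1 \in \mathrm{Cyc}(G)$, this also gives $x_i \neq 1$. Part (iv) is a direct reading of Lemma~\ref{l:path-simp}$(i)$ and $(iii)$, and part (v) combines (iv) with Lemma~\ref{l:simplicial}, which says that a vertex outside $\mathrm{sl}(\mathcal{E}(G))$ must lie in at least two maximal cyclic subgroups.

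For (vi), I would suppose some edge $\{x_i, x_{i+1}\}$ of the type in question lies in $\mathcal{P}(G)$ (reading indices modulo $n$ in the cycle case). If $x_i \in \langle x_{i+1}\rangle$, then together with the adjacent edge $\{x_{i+1}, x_{i+2}\}$ of $\mathcal{E}(G)$ I deduce $\langle x_i, x_{i+2}\rangle \leq \langle x_{i+1}, x_{i+2}\rangle$ is cyclic, producing the edge $\{x_i, x_{i+2}\}$ which is absent from the induced $\gamma$ or $\mathcal{C}$; if instead $x_{i+1} \in \langle x_i\rangle$, the symmetric argument using $x_{i-1}$ produces the forbidden edge $\{x_{i-1}, x_{i+1}\}$. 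Finally, (vii) follows from (vi), because inside the cyclic group $\langle x, y\rangle$ divisibility of orders forces $\langle x\rangle \leq \langle y\rangle$ or vice versa, hence a power relation and an edge of $\mathcal{P}(G)$, contradicting (vi). The main obstacle, if any, is the bookkeeping in (vi): one has to verify that the neighbours $x_{i-1}$ and $x_{i+2}$ actually exist (which is where the edge being \emph{internal} in the path case gets used) and that the manufactured chord is genuinely not an edge of $\gamma$ or $\mathcal{C}$ (which is where $n \geq 4$ is needed, so that in the cycle $x_{i+2} \neq x_i$ and the distance between them is still at least $2$).
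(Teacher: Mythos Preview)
Your proposal is correct and follows essentially the same approach as the paper. The only differences are cosmetic: for (ii) you use a closed-twins argument (which is perfectly valid, since $P_n$ and $C_n$ with $n\geq 4$ have no closed twins) whereas the paper reduces to (i) by picking a neighbour $x_k$ of $x_i$ and noting $\langle x_i,x_k\rangle=\langle x_j,x_k\rangle$; and for (vi) you inline the argument that the paper packages separately as Remark~\ref{r:cheryl}.
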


\begin{proof}
$(i)$ Suppose to the contrary that $X_A$ and $X_B$ are cyclic and $X_A=X_B$. Assume first that $A\cap B\ne\varnothing$, so $A=\{i,j\}$ and $B=\{i, k\}$ for pairwise distinct $i,j,k\in [n].$ 
Then $\langle x_i, x_j\rangle=X_A=X_B=\langle x_i, x_k\rangle$ and thus $\langle x_i, x_j, x_k\rangle$ is cyclic.
Therefore  $\{x_i, x_j, x_k\}$ induces a $3$-cycle $\nu$ in $\mathcal{E}(G)$. 
However $\nu$ is a subgraph of either $\gamma$ or $\mathcal{C}$, and we have a contradiction. 
Thus $A\cap B=\varnothing$, so $A=\{i,j\}$ and $B=\{k, \ell\}$ for pairwise distinct $i,j,k,\ell\in [n].$ 
Then $\langle x_i, x_j\rangle=X_A=X_B=\langle x_k, x_\ell\rangle$ and thus $\langle x_i, x_j, x_k, x_\ell\rangle$ is cyclic. Therefore $\{x_i, x_j, x_k,x_\ell\}$ induces a complete subgraph  $\nu\cong K_4$ in $\mathcal{E}(G)$, and again we have a contradiction since  $\nu$ is a subgraph of either $\gamma$ or $\mathcal{C}$.

\smallskip
$(ii)$ Assume to the contrary that $\langle x_i\rangle= \langle x_j\rangle$, for some distinct $i, j\in [n]$. Then, interchanging $i$ and $j$ if necessary, we may assume that $\{x_i,x_k\}$ is an edge of $\mathcal{E}(G)$, for some $k\in[n]$ such that $i,j,k$ are pairwise distinct. Thus $\langle x_i,x_k\rangle=\langle x_j,x_k\rangle$ is cyclic.
This contradicts $(i)$, with $A\coloneq\{i,k\}$ and $B\coloneq\{j,k\}$.

\smallskip
$(iii), (iv)$ By Lemma~\ref{l:cyc}, the cycliciser $\mathrm{Cyc}(G)$ is the subset of star vertices of $\mathcal{E}(G)$, and hence part $(iii)$ follows from Lemma \ref{l:path-simp} $(ii)$ and $(iii)$, while part $(iv)$ follows from Lemma \ref{l:path-simp}$(i)$ and $(iii)$.
 
\smallskip
 $(v)$ Let $i$ be as in part $(v)$. Then by part $(iv)$, the vertex $x_i$  is not a simplicial vertex of $\mathcal{E}(G)$. Hence, by Lemma~\ref{l:simplicial},  $x_i$ lies in at least two distinct maximal cyclic subgroups of $G$.

 \smallskip
 $(vi), (vii)$ Let $e=\{x,y\}$ be an internal edge of $\gamma$ or an arbitrary edge of $\mathcal{C}$. Then, if necessary relabeling the vertices in the case $e\in E(\mathcal{C})$, we have $x=x_i,y=x_{i+ 1}$ for some $i$ satisfying $2\leq\ i\leq n-2.$
 It follows that $\gamma':=( x_{i-1},x_i,x_{i+1},x_{i+2})$ is  path of $\mathcal{E}(G)$ and, since $\gamma'$ is a subgraph of $\gamma$ or $\mathcal{C}$, it must be an induced $4$-path of $\mathcal{E}(G)$.
 Hence, by Remark~\ref{r:cheryl}, $\{x_i,x_{i+ 1}\}$ is not an edge of $\mathcal{P}(G)$, proving part $(vi)$. Since  $\{x_i,x_{i+ 1}\}\in E(\mathcal{E}(G))$, the group $\langle x_i, x_{i+ 1}\rangle$ is cyclic of order $m$, say, and so has a unique subgroup of order $m'$ for each divisor $m'$ of $m$. In particular, if either $|x_i|$ divides $|x_{i+1}|$, or $|x_{i+1}|$ divides $|x_{i}|$, then $\{x_i,x_{i+ 1}\}$ is an edge of the power graph $\mathcal{P}(G)$, which contradicts part $(vi)$. This proves part $(vii)$.
\end{proof}

We give two applications of Lemma~\ref{l:general}.
The first reduces the problem of deciding whether $\mathcal{E}(G)$ has the property  of being chordal, $C_n$-free for some $n\geq 4$, or a cograph, to the same decision problem for the vertex deleted graph  $\mathcal{E}(G)- U$, for a certain subset $U$ of vertices.

\begin{lemma}\label{l:cut-cyc-simp}
Let $G$ be a  group.
\begin{enumerate}
\item[$(i)$]
Let $U\subseteq \mathrm{Cyc}(G).$ Then 
$\mathcal{E}(G)-U$ is a cograph if and only if 
$\mathcal{E}(G)$ is a cograph.
\item[$(ii)$] Let $U\subseteq 
\mathrm{sl}(\mathcal{E}(G))\cup\mathrm{Cyc}(G)$ and let $n\geq 4.$ Then  
$\mathcal{E}(G)- U$ is $C_n$-free, or chordal, if and only if $\mathcal{E}(G)$ is $C_n$-free, or  chordal, respectively.
\end{enumerate}
\end{lemma}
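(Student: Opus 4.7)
The plan is to observe that one direction is immediate in both parts: since $\mathcal{E}(G) - U$ is an induced subgraph of $\mathcal{E}(G)$, any forbidden induced subgraph in $\mathcal{E}(G) - U$ is already a forbidden induced subgraph in $\mathcal{E}(G)$. So the work is in the converses, and for these the strategy is to show that every induced $P_4$ (respectively every induced $n$-cycle with $n \geq 4$) of $\mathcal{E}(G)$ must be disjoint from $U$, so that it survives in $\mathcal{E}(G) - U$.

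For part $(i)$, I would argue contrapositively: suppose $\mathcal{E}(G)$ is not a cograph, so it contains an induced path $\gamma \cong P_4$ on a vertex set $V(\gamma)$ of size $4 \geq 4$. By Lemma~\ref{l:general}$(iii)$ applied to $\gamma$, we have $V(\gamma) \cap \mathrm{Cyc}(G) = \varnothing$. Since $U \subseteq \mathrm{Cyc}(G)$, it follows that $V(\gamma) \cap U = \varnothing$, and hence $\gamma$ is an induced subgraph of $\mathcal{E}(G) - U$, showing that $\mathcal{E}(G) - U$ is not a cograph either.

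For part $(ii)$, the same scheme works for cycles. Suppose $\mathcal{E}(G)$ contains an induced cycle $\mathcal{C} \cong C_n$ for some $n \geq 4$. By Lemma~\ref{l:general}$(iii)$, $V(\mathcal{C}) \cap \mathrm{Cyc}(G) = \varnothing$, and by Lemma~\ref{l:general}$(iv)$, $V(\mathcal{C}) \cap \mathrm{sl}(\mathcal{E}(G)) = \varnothing$. Combining, $V(\mathcal{C}) \cap \big(\mathrm{sl}(\mathcal{E}(G)) \cup \mathrm{Cyc}(G)\big) = \varnothing$, so $V(\mathcal{C}) \cap U = \varnothing$, and $\mathcal{C}$ is still an induced $n$-cycle of $\mathcal{E}(G) - U$. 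This handles the $C_n$-free statement; the chordal statement then follows by applying the $C_n$-free equivalence for every $n \geq 4$ simultaneously (since chordality is by definition $\{C_n : n\geq 4\}$-freeness).

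There is no real obstacle here: both statements reduce to the structural fact, already established in Lemma~\ref{l:general}, that the vertex set of any sufficiently long induced path or induced cycle in $\mathcal{E}(G)$ avoids both the cycliciser and (in the cycle case) the simplicial vertices. The only care needed is in part $(i)$ to note that while simplicial vertices may appear as the extreme vertices of an induced $P_4$, the cycliciser (= star vertices, by Lemma~\ref{l:cyc}$(i)$) cannot appear at all, which is exactly why part $(i)$ requires $U \subseteq \mathrm{Cyc}(G)$ whereas part $(ii)$ can tolerate the larger set $U \subseteq \mathrm{sl}(\mathcal{E}(G)) \cup \mathrm{Cyc}(G)$.
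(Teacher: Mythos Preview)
Your proof is correct and follows essentially the same approach as the paper: both directions are handled by noting that $\mathcal{E}(G)-U$ is an induced subgraph of $\mathcal{E}(G)$ (the easy direction), and by invoking Lemma~\ref{l:general}$(iii)$ for paths and Lemma~\ref{l:general}$(iii)$--$(iv)$ for cycles to show that any induced $P_4$ or $C_n$ ($n\geq 4$) in $\mathcal{E}(G)$ avoids $U$ and therefore persists in $\mathcal{E}(G)-U$ (the substantive direction). The paper phrases this as ``the set of induced $4$-paths (resp.\ $n$-cycles) in $\mathcal{E}(G)$ and $\mathcal{E}(G)-U$ are the same'', but the content is identical to your contrapositive argument.
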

    
\begin{proof} 
$(i)$  We show that the set of induced $4$-paths in $\mathcal{E}(G)$ and in $\mathcal{E}(G)- U$ are the same.
Assume that  $\gamma$ is an induced $4$-path in $\mathcal{E}(G)$. Then by Lemma~\ref{l:general}$(iii)$, none of the vertices of $\gamma$ lies in $\mathrm{Cyc}(G)$, and hence $\gamma$ is an induced $4$-path in 
$\mathcal{E}(G)- U$. Conversely suppose that $\gamma$ is an induced $4$-path in $\mathcal{E}(G)- U$. Then, since each edge of $\mathcal{E}(G)$ between two vertices of $\gamma$ is also an edge of $\mathcal{E}(G)- U$, the subgraph $\gamma$ is also an induced $4$-path of $\mathcal{E}(G)$. 

$(ii)$  We show that, for each $n\geq4$, the set of induced $n$-cycles in $\mathcal{E}(G)$ and in $\mathcal{E}(G)- U$ are the same. 
Assume that $\mathcal{C}$ is an induced $n$-cycle in $\mathcal{E}(G)$. Then by Lemma~\ref{l:general}$(iii)$ and $(iv)$, none of the vertices of $\mathcal{C}$ lies in $\mathrm{sl}(\mathcal{E}(G))\cup\mathrm{Cyc}(G)$, and hence $\mathcal{C}$ is an induced $n$-cycle in 
$\mathcal{E}(G)- U$. Conversely, if $\mathcal{C}$ is an induced $n$-cycle in  $\mathcal{E}(G)- U$, then the same argument used in part $(i)$ proves  that $\mathcal{C}$ is an induced $n$-cycle  of $\mathcal{E}(G)$.
\end{proof}

Lemma~\ref{l:general} also provides simpler proofs of some known results about EPPO-groups. For instance, it leads to a clearer proof of the result \cite[Proposition 2]{Ma24}  of Ma et al.  that, for an EPPO group $G$,   $\mathcal{E}(G)$ is a cograph and is chordal.

\begin{prop} \label{known}
Let $G$ be an {\rm EPPO}-group. Then 
\begin{enumerate}
    \item[$(i)$]  {\rm \cite[Proposition 2]{Ma24}}\quad  $\mathcal{E}(G)$ is a cograph and is chordal.
    \item[$(ii)$] $\mathcal{P}(G)$ is a cograph and is chordal.
    \end{enumerate}
\end{prop}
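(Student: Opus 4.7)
The plan is to exploit the fact, recalled in Section~\ref{s:peg} from \cite[Theorem 28]{alip}, that EPPO groups are precisely the finite groups for which $\mathcal{P}(G)=\mathcal{E}(G)$. This collapses parts $(i)$ and $(ii)$ into a single statement. Moreover, by Theorem~A, once we establish that $\mathcal{E}(G)$ is a cograph, chordality comes for free. So the entire proposition reduces to proving that $\mathcal{E}(G)$ is $P_4$-free.

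For that, I would argue by contradiction: assume $\gamma=(x_1,x_2,x_3,x_4)$ is an induced $4$-path in $\mathcal{E}(G)$, and focus on its unique internal edge $\{x_2,x_3\}$. The key observation is that in an EPPO group every cyclic subgroup has prime power order, because any generator of a cyclic subgroup has prime power order. Since $\{x_2,x_3\}\in E(\mathcal{E}(G))$, the subgroup $\langle x_2,x_3\rangle$ is cyclic, hence of order $p^k$ for some prime $p$ and some $k\geq 1$. Therefore $|x_2|$ and $|x_3|$ are both powers of the same prime $p$, and in particular one of them divides the other. This directly contradicts Lemma~\ref{l:general}$(vii)$ applied to the internal edge $\{x_2,x_3\}$ of $\gamma$. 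Hence no induced $4$-path exists in $\mathcal{E}(G)$, so $\mathcal{E}(G)$ is a cograph, and by Theorem~A it is chordal. This proves $(i)$, and $(ii)$ follows since $\mathcal{P}(G)=\mathcal{E}(G)$.

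There is no serious obstacle here: the main content is recognising that Lemma~\ref{l:general}$(vii)$ is a ready-made tool that trivialises the argument once the EPPO hypothesis is translated into the prime-power-order constraint on cyclic subgroups. The virtue of this proof, as opposed to the argument in \cite[Proposition 2]{Ma24}, is that it isolates a single local obstruction (the internal edge of a $P_4$) and disposes of it in one line, and simultaneously handles chordality without any extra work by invoking Theorem~A.
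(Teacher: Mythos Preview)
Your proof is correct and essentially follows the paper's approach: both arguments use the EPPO characterisation $\mathcal{P}(G)=\mathcal{E}(G)$ together with Lemma~\ref{l:general} applied to the internal edge of a putative induced $P_4$. The only difference is packaging: the paper invokes part~$(vi)$ of Lemma~\ref{l:general} (internal edges of induced $P_4$'s and all edges of induced $C_n$'s with $n\geq 4$ must lie in $E(\mathcal{E}(G)\setminus\mathcal{P}(G))$, which is empty), thereby obtaining both the cograph and chordal properties in one stroke without appealing to Theorem~A, whereas you use the closely related part~$(vii)$ for the cograph property and then Theorem~A for chordality.
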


\begin{proof}
$(i)$ By \cite[Theorem 28]{alip},
$\mathcal{E}(G)\setminus 
\mathcal{P}(G)$ is a null graph with vertex set $G$ and no edges. 
Thus, by Lemma~\ref{l:general}\,$(vi)$, $\mathcal{E}
(G)$ contains no induced $4$-path and no $n$-cycle for any $n\geq4$. Thus  $\mathcal{E}(G)$ is  
a cograph and is chordal. 

$(ii)$ This  follows from part $(i)$, on recalling that for EPPO-groups the power graph and the enhanced power graph coincide.
\end{proof}
Note that Proposition~\ref{known} improves an earlier result of Doostabadi et al. \cite[Corollary 3.2]{Doo} stating  that  the power graph of an EPPO-group is $C_4$-free.

\subsection{$C_4$-free enhanced power graphs.}
Here we give  a group theoretic criterion for an enhanced power graph to have an induced $4$-cycle  (see Proposition \ref{p:AB}).
We start by recalling a useful result from \cite{alip}.

\begin{lemma}\cite[Lemma 32]{alip}\label{alipur1} Let $G$ be a group and let $x,y,z\in G.$ If $(x,y,z,x)$ is a $3$-cycle in $\mathcal{E}(G)$, then $\langle x,y,z\rangle$ is cyclic.
\end{lemma}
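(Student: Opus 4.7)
The plan is to reduce the statement to finite abelian group theory via Sylow decomposition. First I would unpack the hypothesis: the three edges of the triangle tell us that each of $\langle x,y\rangle$, $\langle y,z\rangle$ and $\langle x,z\rangle$ is cyclic. In particular, $x,y,z$ pairwise commute, so $H\coloneq\langle x,y,z\rangle$ is a finite abelian group. It remains to show that $H$ is cyclic, and for this I would use the standard criterion that a finite abelian group is cyclic if and only if each of its Sylow subgroups is cyclic.

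Next I would fix a prime $p$ dividing $|H|$ and pass to $p$-parts. For $g\in H$, write $g_p$ for the $p$-component of $g$ under the canonical decomposition $H=\prod_p H_p$; then $H_p=\langle x_p, y_p, z_p\rangle$. Since $\langle x,y\rangle$ is cyclic, its Sylow $p$-subgroup $\langle x_p, y_p\rangle$ is a cyclic $p$-group, and likewise for $\langle y_p, z_p\rangle$ and $\langle x_p, z_p\rangle$. Because the subgroup lattice of a cyclic $p$-group is a chain, any two cyclic subgroups are comparable, so in each of the three pairs the cyclic subgroups $\langle x_p\rangle, \langle y_p\rangle, \langle z_p\rangle$ are pairwise comparable by inclusion.

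Given three pairwise comparable subgroups, one of them contains the other two. Hence $H_p$ equals one of $\langle x_p\rangle, \langle y_p\rangle, \langle z_p\rangle$ and is cyclic. Applying this to every prime divisor of $|H|$, we conclude that every Sylow subgroup of $H$ is cyclic, and therefore $H=\langle x,y,z\rangle$ is cyclic, as required.

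I do not expect any real obstacle here: the only mildly subtle point is the justification that pairwise comparability of three subgroups forces one of them to contain the others, but this is immediate since the largest of the three (which exists by comparing any two and then comparing the larger with the third) engulfs the rest. The heart of the argument is the simple observation that the subgroup lattice of a cyclic $p$-group is totally ordered, which lets pairwise cyclicity propagate from $2$-generated to $3$-generated subgroups in the abelian setting.
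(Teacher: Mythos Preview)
Your proof is correct. The key steps---passing to the abelian group $H=\langle x,y,z\rangle$, decomposing into Sylow $p$-subgroups, and using that the subgroup lattice of a cyclic $p$-group is a chain to force $\langle x_p\rangle,\langle y_p\rangle,\langle z_p\rangle$ to be pairwise comparable---are all sound. The only point worth making explicit is that the $p$-component $g_p$ of an element $g$ in a finite abelian group is intrinsic (it depends only on $g$, not on the ambient abelian group), so the Sylow $p$-subgroup of the cyclic group $\langle x,y\rangle$ really is $\langle x_p,y_p\rangle$; you use this implicitly and it is true.

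Note that the paper does not give its own proof of this lemma: it is quoted verbatim from \cite{alip} (their Lemma~32) and used as a black box. So there is no ``paper's proof'' to compare against here. Your argument is a perfectly good self-contained proof, and in fact the Sylow reduction is essentially the natural route; the original proof in \cite{alip} proceeds along similar lines.
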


\begin{lemma}\label{l:gencon1} 
Let $G$ be a  group. 
Assume that $\mathcal{E}(G)$ is not chordal and let $n$ be minimal such that $n\geq 4$ and $\mathcal{E}(G)$ contains an induced $n$-cycle. Then there is such an induced $n$-cycle 
$$\mathcal{C}=(a_1,a_2,\ldots,a_n,a_1)$$
with the following properties:
\begin{enumerate}
\item[$(1)$] each $a_i\in G\setminus \mathrm{sl}(\mathcal{E}(G))$; 
\item[$(2)$] each $a_i$ has prime power order;
\item[$(3)$] $\gcd(|a_i|,|a_{i+1}|)=1$ for all $i\in[n]$, where $a_{n+1}\coloneq a_1$.
\end{enumerate}
\end{lemma}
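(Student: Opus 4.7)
The plan is to start with an arbitrary induced $n$-cycle in $\mathcal{E}(G)$ and modify it to satisfy all three properties. Property~$(1)$ is immediate: by Lemma~\ref{l:general}$(iv)$, an induced cycle of length $n\geq 4$ has no simplicial vertex. Moreover, once property~$(2)$ is established, property~$(3)$ follows automatically, because if consecutive vertices $a_i,a_{i+1}$ both had order a power of the same prime $p$, then $\langle a_i,a_{i+1}\rangle$ would be a cyclic $p$-group in which one of $|a_i|,|a_{i+1}|$ divides the other, directly contradicting Lemma~\ref{l:general}$(vii)$. So the central task is property~$(2)$.

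To prove~$(2)$, I would choose, among all induced $n$-cycles in $\mathcal{E}(G)$, a cycle $\mathcal{C}=(a_1,\ldots,a_n,a_1)$ minimising $\sum_{l=1}^n |a_l|$. Suppose for a contradiction that some $a_i$ has order $|a_i|=p^e s$ with $\gcd(p,s)=1$ and $s>1$, and take the primary decomposition $a_i=uv=vu$ inside $\langle a_i\rangle$, where $|u|=p^e$, $|v|=s$, and $\langle u,v\rangle=\langle a_i\rangle$; in particular $|u|,|v|<|a_i|$. A brief check shows $u,v\notin V(\mathcal{C})$: if $u=a_l$ for some $l$, then $a_l\in\langle a_i\rangle$ would put $\{a_l,a_i\}$ in $E(\mathcal{P}(G))$, contradicting either Lemma~\ref{l:general}$(vi)$ (when $l\equiv i\pm 1\pmod n$) or the inducedness of $\mathcal{C}$ (when $l$ is non-consecutive to $i$). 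The crucial observation, via Lemma~\ref{alipur1}, is that for every $k$ with $k\not\equiv i-1,i,i+1\pmod n$, at most one of $\{u,a_k\}$ and $\{v,a_k\}$ lies in $E(\mathcal{E}(G))$: since $\{u,v\}$ is an edge (the two are distinct non-trivial elements of the cyclic group $\langle a_i\rangle$), if both were edges then the triangle $(u,v,a_k,u)$ would force $\langle a_i,a_k\rangle=\langle u,v,a_k\rangle$ to be cyclic, contradicting that $\{a_i,a_k\}$ is a non-edge of $\mathcal{C}$. Setting $K_u:=\{k:\{u,a_k\}\in E(\mathcal{E}(G))\}$ and $K_v$ analogously, the sets $K_u$ and $K_v$ are therefore disjoint.

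I would then distinguish two cases. If one of $K_u,K_v$ is empty, say $K_u=\emptyset$, the sequence obtained by replacing $a_i$ with $u$ in $\mathcal{C}$ is readily verified to be an induced $n$-cycle with strictly smaller $\sum|a_l|$, contradicting the minimality of $\mathcal{C}$. Otherwise both $K_u,K_v$ are non-empty, which forces $n\geq 5$ (since for $n=4$ the unique non-consecutive position cannot lie in two disjoint non-empty sets), and I would produce a strictly shorter induced cycle to contradict the minimality of $n$. The shorter cycle will be one of the shortcut cycles $(u,a_{i+1},\ldots,a_{k^*},u)$ or $(u,a_{i-1},\ldots,a_{k^*},u)$, with $k^*$ chosen as the $K_u$-element closest to $i$ in the corresponding direction so that the cycle is chord-free; the potential obstacle where both directions yield triangles (i.e.\ $\{i+2,i-2\}\subseteq K_u$) is resolved by switching to $v$, since the disjointness $K_u\cap K_v=\emptyset$ then forces every element of the non-empty $K_v$ to lie at cyclic distance at least $3$ from $i$ in both directions. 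The main difficulty will be verifying the chord-freeness of the resulting cycle, which rests on the minimality of $k^*$, the disjointness of $K_u$ and $K_v$, and the inducedness of $\mathcal{C}$.
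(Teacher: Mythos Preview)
Your proposal is correct. Properties~$(1)$ and~$(3)$ are handled exactly as in the paper (the paper derives~$(3)$ by showing that a common prime forces $\langle a_i\rangle\leq\langle a_{i+1}\rangle$ or conversely, hence a chord---which is precisely the content of Lemma~\ref{l:general}$(vii)$). For property~$(2)$ the paper takes a somewhat different route: it does not minimise $\sum_l|a_l|$, but instead considers the \emph{full} primary decomposition of a non-prime-power vertex $a_1$ and applies Lemma~\ref{alipur1} \emph{recursively} to show that some primary component $(a_1)^{\ell_p}$ is non-adjacent to $a_3$; a single appeal to the minimality of $n$ then forces $(a_1)^{\ell_p}$ to be non-adjacent to all of $a_3,\dots,a_{n-1}$, so it replaces $a_1$ directly and the number of prime-power vertices increases. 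Your approach trades the recursive triangle argument for a single application of Lemma~\ref{alipur1} (on the triangle $(u,v,a_k)$, yielding $K_u\cap K_v=\varnothing$), and trades the direct replacement for a bidirectional shortcut argument with a case split. Both hinge on the minimality of $n$ in the same way; the paper's avoids the case analysis, yours avoids the inductive triangle step.

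Two small points to tighten when you write it out. First, your sets $K_u,K_v$ must be taken as subsets of the non-consecutive positions $\{k:k\not\equiv i-1,i,i+1\pmod n\}$, since $i\pm1$ would otherwise lie in $K_u\cap K_v$. Second, in checking that the shortcut cycle $(u,a_{i+1},\dots,a_{k^*},u)$ is induced, the non-adjacency of $a_j,a_l$ for $i+1\le j<l\le k^*$ with $l-j\ge2$ follows because $l-j\le k^*-(i+1)\le n-3$, so these remain non-consecutive in $\mathcal{C}$ as well.
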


\begin{proof} 
Assume that some vertex of $\mathcal{C}$ has non-prime-power 
order. Then without loss of generality, $|a_1|=m$ with $m$ not a prime power. Let $\pi(m)=\{p_1,\dots, p_t\}$ be the set of primes dividing $m$, where $|\pi(m)|=t\geq2$, and for each $p\in \pi(m)$ write $m=m_p\ell_p$ where $m_p$ is a $p$-power and $\ell_p$ is coprime to $p$.  
Then $(a_1)^{\ell_p}$ is nontrivial and has order $m_p$. 
Since $\{a_1,a_2\}$ and $\{a_1,a_n\}$ are edges of $\mathcal{E}(G)$, both $\seq{a_1,a_2}$ and $\seq{a_1,a_n}$ are cyclic, and hence also $\seq{(a_1)^{\ell_p},a_2}$ and $\seq{(a_1)^{\ell_p},a_n}$ are cyclic. Thus  $\mathcal{C}_1:=\big((a_1)^{\ell_p},a_2,\ldots,a_n,(a_1)^{\ell_p}\big)$ is an $n$-cycle in $\mathcal{E}(G)$.

We claim that $\seq{(a_1)^{\ell_p},a_3}$ is not cyclic for some $p\in\pi(m)$. Assume to the contrary that, for each $p\in\pi(m)$, $\seq{(a_1)^{\ell_p},a_3}$ is cyclic. 
We apply Lemma \ref{alipur1} recursively, for  $i\in\{2,\dots,t\}$, with $z=a_3$, $y=(a_1)^{\ell_{p_i}}$, and 
$x=\prod_{j<i}(a_1)^{\ell_{p_j}}$, noting that, for each application, we have that $(x,y,z,x)$ is a $3$-cycle in $\mathcal{E}(G)$. We 
conclude that  $\seq{a_3,(a_1)^{\ell_{p_1}},\dots, 
(a_1)^{\ell_{p_t}}}$ is cyclic. However, 
$\seq{(a_1)^{\ell_{p_1}}, \dots,
(a_1)^{\ell_{p_t}}}=\seq{a_1}$. So this implies that 
$\seq{a_3,a_1}$ is cyclic and hence $\{a_1,a_3\}$ is an 
edge of $\mathcal{E}(G)$,  which is a contradiction. 
Thus the claim is proved. 

Consider now a prime $p\in\pi(m)$ such that $\seq{(a_1)^{\ell_{p}},a_3}$ is not cyclic. Then $(a_1)^{\ell_{p}}$ is not adjacent to $a_3$, while $(a_1)^{\ell_{p}}$ is adjacent to $a_n$ (since $\seq{(a_1)^{\ell_{p}}, a_n}\leq \seq{a_1, a_n}$, which is cyclic). 
Let $k$ be the least integer such that $3<k\leq n$ and 
$(a_1)^{\ell_{p}}$ is adjacent to $a_k$. Now since $\mathcal{C}$ is an induced $n$-cycle, $\set{a_i,a_{j}}$ is not an edge of $\mathcal{E}(G)$ whenever $i,j\neq 1$ and $|i-j|>2$. Thus the sequence 
$\mathcal{C}_2:=\big((a_1)^{\ell_p},a_2,\ldots,a_k,(a_1)^{\ell_p}\big)$
is an induced $k$-cycle in $\mathcal{E}(G)$. Since $\mathcal{C}_2$ is a subgraph of $\mathcal{C}$, this implies that $k=n$. Thus 
$\mathcal{C}_2:=\big((a_1)^{\ell_p},a_2,\ldots,a_n,(a_1)^{\ell_p}\big)$ is an induced $n$-cycle in $\mathcal{E}(G)$, and the number of its vertices having prime power order is greater than that number for $\mathcal{C}$. Repeating this argument for any of $a_2,\ldots ,a_n$ which have  non-prime-power order, we obtain an induced $n$-cycle 
such that all its vertices have prime power order. We assume from now on that each $|a_i|$ is a prime power.

By Lemma \ref{l:general} $(iv)$ each $a_i\in G\setminus \mathrm{sl}(\mathcal{E}(G))$.
Finally we show that $\gcd(|a_i|,|a_{i+1}|)=1$ for every $i\in[n]$. 
Assume that this is false and that  $\gcd(|a_i|,|a_{i+1}|)\neq 1$. Since both $|a_i|$ and $|a_{i+1}|$ are prime powers, this implies that $|a_i|$ and $|a_{i+1}|$ are powers of the same prime, say $p$, so $\seq{a_i,a_{i+1}}$ is a cyclic $p$-group. Therefore either $\seq{a_i}\leq \seq{a_{i+1}}$ or $\seq{a_{i+1}}\leq \seq{a_i}$. The first case implies that $\seq{a_i,a_{i+2}}\leq \seq{a_{i+1},a_{i+2}}$, which is cyclic, and thus $\set{a_{i},a_{i+2}}$ is an edge, and this is a contradiction. Similarly, the second case implies that  
$\seq{a_{i+1},a_{i-1}}\leq \seq{a_i,a_{i-1}}$ and therefore $\seq{a_{i+1},a_{i-1}}$ is cyclic, so that $\set{a_{i+1},a_{i-1}}$ is an edge, which again is a contradiction.
\end{proof}

Now we prove the most general form of our criterion for an induced $C_4$ subgraph.

\begin{prop}\label{p:AB}
Let $G$ be a  group. Then the following are equivalent.
\begin{enumerate}
    \item[$(a)$]  $\mathcal{E}(G)$ has an 
induced $4$-cycle;
    \item[$(b)$] $G$ has a subgroup $H$ of the form $H=AB$, where $A,B\leq G$ satisfy the following conditions:
    \begin{enumerate}
        \item[$(i)$] $A=\seq{a_1,a_2}$ and $B=\seq{b_1,b_2}$ are both non-cyclic;
        \item[$(ii)$] $A$ and $B$ centralise each other;
        \item[$(iii)$] for $i,j\in[2]$, $|a_i|$ and $|b_j|$ are prime powers, and $\gcd(|a_i|,|b_j|)=1$.
    \end{enumerate}
\end{enumerate}
\end{prop}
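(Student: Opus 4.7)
The plan is to establish both directions by direct construction, with the forward direction $(a)\Rightarrow(b)$ relying crucially on the structural Lemma~\ref{l:gencon1} specialised to $n=4$, and the reverse direction $(b)\Rightarrow(a)$ being a short verification.

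For $(b)\Rightarrow(a)$, given $A$ and $B$ as described I will verify that the sequence $(a_1,b_1,a_2,b_2,a_1)$ is an induced $4$-cycle in $\mathcal{E}(G)$. Since $A=\langle a_1,a_2\rangle$ is non-cyclic, $a_1\ne a_2$ and $\{a_1,a_2\}\notin E(\mathcal{E}(G))$; symmetrically for $b_1,b_2$. The non-cyclicity of $A$ and $B$ also forces each $a_i$ and each $b_j$ to be nontrivial, so the unique prime dividing $|a_i|$ differs from the unique prime dividing $|b_j|$, which ensures $a_i\ne b_j$ and establishes distinctness of the four vertices. The four cycle edges follow because $A$ and $B$ centralise each other, so $a_ib_j=b_ja_i$, and since $\gcd(|a_i|,|b_j|)=1$, the subgroup $\langle a_i,b_j\rangle=\langle a_i\rangle\times\langle b_j\rangle$ is cyclic of order $|a_i||b_j|$; hence $\{a_i,b_j\}\in E(\mathcal{E}(G))$ for all $i,j\in[2]$.

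For $(a)\Rightarrow(b)$, I invoke Lemma~\ref{l:gencon1}. Since $\mathcal{E}(G)$ contains an induced $4$-cycle it is not chordal, and the minimum $n\ge 4$ for which $\mathcal{E}(G)$ has an induced $n$-cycle is $n=4$. The lemma then yields an induced $4$-cycle $\mathcal{C}=(c_1,d_1,c_2,d_2,c_1)$ (renaming the vertices so that the two ``diagonal pairs'' of non-edges are $\{c_1,c_2\}$ and $\{d_1,d_2\}$) in which every vertex has prime power order and consecutive vertices have coprime orders. Set $A\coloneq\langle c_1,c_2\rangle$ and $B\coloneq\langle d_1,d_2\rangle$. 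Condition (iii) is immediate from the coprimality of the orders of consecutive vertices around $\mathcal{C}$. For (i), the non-edges $\{c_1,c_2\}$ and $\{d_1,d_2\}$ of the induced cycle force $A$ and $B$ to be non-cyclic. The key observation for (ii) is that each $d_j$ is joined to both $c_1$ and $c_2$ by an edge of $\mathcal{C}$, so each of $\langle c_1,d_j\rangle$ and $\langle c_2,d_j\rangle$ is cyclic and hence abelian; therefore $d_j$ centralises both generators of $A$ and thus centralises $A$. Consequently $B=\langle d_1,d_2\rangle$ centralises $A$, which also guarantees that $H\coloneq AB$ is a subgroup of $G$.

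The proof presents no genuine obstacle: the reverse direction is elementary algebra about direct products of cyclic groups of coprime order, and the forward direction is essentially a repackaging of Lemma~\ref{l:gencon1}, with the centralisation condition recovered from the fact that each pair of consecutive vertices around $\mathcal{C}$ lies in a common cyclic subgroup. The only point worth flagging is the need to verify that Lemma~\ref{l:gencon1} applies with $n=4$, which is automatic from the hypothesis that an induced $C_4$ exists.
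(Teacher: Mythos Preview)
Your proof is correct and follows essentially the same approach as the paper: the reverse direction verifies that $(a_1,b_1,a_2,b_2,a_1)$ is an induced $4$-cycle, and the forward direction applies Lemma~\ref{l:gencon1} (with $n=4$) and reads off the required subgroup data. You are slightly more explicit than the paper in checking distinctness of the four vertices and in justifying that the minimal $n$ in Lemma~\ref{l:gencon1} equals $4$, which is a welcome clarification.
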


\begin{proof}
Assume first that condition $(b)$ holds for a subgroup $H=AB$. Since $A$ and $B$ centralise 
each other and  $\gcd(|a_i|,|b_j|)=1$ for all $i,j\in[2] $, the sequence 
$$
\mathcal{C}=(a_1,b_1,a_2,b_2,a_1)
$$
is a $4$-cycle in $\mathcal{E}(G)$. Moreover, since $A$ and $B$ are not 
cyclic, $\mathcal{C}$ is an induced $4$-cycle, and hence condition $(a)$ holds.

Conversely, assume that condition $(a)$ holds, and let $\mathcal{C}$ be an induced 
$4$-cycle in  $\mathcal{E}(G)$. By Lemma 
\ref{l:gencon1} we may assume that 
$$
\mathcal{C}=(a_1,b_1,a_2,b_2,a_1)
$$
where $|a_i|$ and $|b_j|$ are prime powers for each $i,j\in[2]$ and such that $\gcd(|a_i|,|b_j|)=1$ for every $i,j\in[2]$. Set $A=\seq{a_1,a_2}$, $B=\seq{b_1,b_2}$, and $H=\seq{A, B}$. Since $\set{a_1,a_2}$ and $\set{b_1,b_2}$ are not edges in $\mathcal{E}(G)$, both $A$ and $B$ are not cyclic. Moreover,  for each $i,j\in [2]$, $\set{a_i,b_j}$ is an edge, so the subgroup $\seq{a_i,b_j}$ is cyclic. This implies that the subgroups $A$ and $B$ centralise each other, so that $H=AB$. 
\end{proof}

A consequence is the following corollary.

\begin{corollary}\label{c:AB_maximal}
If $G$ is a finite nonabelian simple group, then 
$\mathcal{E}(G)$ is $C_4$-free if and only, for every 
maximal subgroup $M$ of $G$, 
$\mathcal{E}(M)$ is $C_4$-free.   
\end{corollary}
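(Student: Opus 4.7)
The plan is to treat the two directions separately. The forward direction is immediate from Corollary~\ref{c:Ma24_Lem1}: if $\mathcal{E}(G)$ is $C_4$-free, then so is every induced subgraph $\mathcal{E}(M)$ for $M$ a maximal subgroup of $G$.

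For the converse, the plan is to argue by contrapositive. Suppose $\mathcal{E}(G)$ contains an induced $4$-cycle $\mathcal{C}$. I would invoke Proposition~\ref{p:AB} to extract non-cyclic subgroups $A=\seq{a_1,a_2}$ and $B=\seq{b_1,b_2}$ of $G$ that centralise each other, with $\{a_1,a_2,b_1,b_2\}=V(\mathcal{C})$ after a suitable relabelling. Setting $H\coloneq AB$, which is a subgroup of $G$ because $A$ and $B$ commute element-wise, the goal reduces to showing that $H$ is a \emph{proper} subgroup of $G$. Once this is established, any maximal subgroup $M$ of $G$ containing $H$ will, via Lemma~\ref{l:Ma24_Lem1}, induce an embedding $\mathcal{E}(H)\leq \mathcal{E}(M)$, so the induced $4$-cycle $\mathcal{C}\subseteq \mathcal{E}(H)$ persists as an induced $4$-cycle in $\mathcal{E}(M)$, contradicting the hypothesis that $\mathcal{E}(M)$ is $C_4$-free.

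The entire weight of the argument therefore rests on the single step $H\neq G$, which I expect to be the main (and essentially only) obstacle; here the simplicity of $G$ is crucial. Since $B$ centralises $A$ element-wise, $B$ normalises $A$, and trivially $A$ normalises itself, so $A\triangleleft H$. Supposing $H=G$, the non-cyclicity of $A$ gives $|A|\geq 4$, so $A$ would be a nontrivial normal subgroup of the simple group $G$, forcing $A=G$. But then $B\leq A=G$ is centralised by all of $G=A$, placing $B\leq Z(G)=1$, contradicting the non-cyclicity of $B$. Notably, neither the prime-power orders nor the coprimality conditions in Proposition~\ref{p:AB}(iii) are required at this stage: the centralising condition alone, together with $Z(G)=1$ and the absence of proper nontrivial normal subgroups, suffices to drive the contradiction inside a nonabelian simple group.
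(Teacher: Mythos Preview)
Your proof is correct and follows essentially the same route as the paper's own argument: both directions are handled identically, and the key step---showing $H=AB$ is proper by observing $A\unlhd H$, then using simplicity of $G$ and $Z(G)=1$ to rule out $H=G$---is the same, with only a cosmetic difference in the order in which the contradiction is reached (the paper first shows $A\neq G$ via $b_1\in C_G(A)$ and $Z(G)=1$, then concludes $H\neq G$; you assume $H=G$, force $A=G$, and derive $B\leq Z(G)=1$).
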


\begin{proof}
One implication is immediate by Corollary  
\ref{c:Ma24_Lem1}. Assume therefore that for 
every maximal subgroup $M$ of $G$ the enhanced 
power graph $\mathcal{E}(M)$ is $C_4$-free 
and, suppose to the contrary that $\mathcal{E}(G)$ is not $C_4$-free. 
Then by Proposition ~\ref{p:AB}, 
$G$ 
has a subgroup $H=AB$ as described in part $(b)$ of  Proposition \ref{p:AB}. Since $A$ and $B$ are non-cyclic, each generator $a_i\ne 1$ and $b_j\ne 1$, and since $A$ and $B$  centralise each other, in particular $b_1$ centralises $A$.
Then since $G$ is a finite nonabelian  simple group, $Z(G)=1$ and hence $1\neq A\neq G$. Also, since $A\unlhd H$, it follows that $H\neq G$. In particular there exists a maximal subgroup $M$ of $G$ containing $H$, and hence $\mathcal{E}(M)$ is not $C_4$-free by Lemma \ref{l:Ma24_Lem1}. This contradicts our assumption, and hence completes the proof. 
\end{proof}

The following explicit examples of groups $H$ satisfying the conditions of Proposition~\ref{p:AB} will prove useful for our proof of Theorem B.

\begin{corollary}\label{c:AB_application}
    Suppose that a  group $G$ has a subgroup $H$, with $H$ one of the following groups:
    \begin{enumerate}
        \item[$(a)$]  $C_r^2\times C_s^2$, for $r,s$ distinct primes;
        \item[$(b)$]  $A_4\times S_3$ or $A_4\times C_2^2$;
        \item[$(c)$]  $\SL_2(p)\times \SL_2(p)$ for an odd prime $p$, or $\SL_2(3)\times Q_8$;
        \item[$(d)$]  $\SL_2(p)\circ \SL_2(p)$ for an odd prime $p$, or $\SL_2(3)\circ Q_8$.
        \item[$(e)$]  $\PSL_2(q)\times \PSL_2(q)=\SL_2(q)\times\SL_2(q)$ for  $q=2^f\geq4$.
\end{enumerate}
Then $\mathcal{E}(G)$ is not $C_4$-free. In particular, $\mathcal{E}(G)$ is not chordal and not a cograph. 
\end{corollary}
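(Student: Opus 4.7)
The plan is to reduce each case to a single application of Proposition~\ref{p:AB}: for each group $H$ in the list I will exhibit two subgroups $A,B\leq H$ satisfying conditions (i)--(iii) of that proposition. Once this is done, Proposition~\ref{p:AB} produces an induced $4$-cycle in $\mathcal{E}(H)$, and since $\mathcal{E}(H)$ is an induced subgraph of $\mathcal{E}(G)$ by Lemma~\ref{l:Ma24_Lem1}, this induced $C_4$ survives in $\mathcal{E}(G)$. The ``in particular'' statement is then immediate: $\mathcal{E}(G)$ cannot be chordal by Definition~\ref{d:cograph/chordal}, and it cannot be a cograph because, by Theorem~A, any cograph enhanced power graph is chordal, and hence $C_4$-free.

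In every case the subgroups $A$ and $B$ will be placed in ``orthogonal'' positions of $H$, exploiting its direct or central product structure, so that conditions (ii) (centralising) and (iii) (coprime prime-power generator orders) are immediate, while the non-cyclicity of $A$ and $B$ (condition (i)) will be visible from the explicit choice of generators. For $(a)$, take $A=C_r^2\times 1$ and $B=1\times C_s^2$, each with the obvious pair of generators of orders $r$ and $s$ respectively. For $(b)$, in $A_4\times S_3$ take $A=A_4\times 1$ generated by two $3$-cycles such as $(123)$ and $(124)$, and $B=1\times S_3$ generated by two transpositions such as $(12)$ and $(13)$; in $A_4\times C_2^2$ replace $B$ by $1\times C_2^2$ generated by its two non-trivial involutions. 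For $(c)$, in $\SL_2(p)\times\SL_2(p)$ with $p$ odd, take $A=Q\times 1$ with $Q\cong Q_8$ a quaternion subgroup of $\SL_2(p)$ (which always exists, since the Sylow $2$-subgroup of $\SL_2(p)$ is generalised quaternion), generated by two elements of order $4$, and $B=1\times\seq{u_1,u_2}$ with $u_1,u_2$ non-commuting elements of order $p$ drawn from distinct Sylow $p$-subgroups of $\SL_2(p)$; for $\SL_2(3)\times Q_8$ swap the roles of the two factors in the obvious way. For $(e)$, in $\SL_2(q)\times\SL_2(q)$ with $q=2^f\geq 4$, take $A=T\times 1$ where $T$ is the Sylow $2$-subgroup of $\SL_2(q)$ (elementary abelian of order $q\geq 4$, hence non-cyclic), generated by two distinct involutions, and $B=1\times\seq{v_1,v_2}$ with $v_1,v_2$ non-commuting elements of odd prime-power order coming from distinct Sylow subgroups for a suitable odd prime divisor of $|\SL_2(q)|$; such a divisor exists since $q^2-1$ is an odd integer greater than $1$, and the simplicity of $\PSL_2(q)=\SL_2(q)$ for $q\geq 4$ ensures that no odd Sylow subgroup is normal, so non-commuting conjugates are available.

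The only point requiring a little extra care is case $(d)$, concerning the central products $\SL_2(p)\circ\SL_2(p)$ and $\SL_2(3)\circ Q_8$. Here one works with the quotient of the corresponding direct product by the diagonal central subgroup $\langle(-I,-I)\rangle$ of order $2$, and pushes forward the subgroups $A,B$ constructed in case $(c)$. Since the generator of this kernel has both coordinates non-trivial, while each of the chosen $A$ and $B$ has at least one coordinate equal to the trivial subgroup, the natural projection is injective on both $A$ and $B$; hence their images are still non-cyclic, still centralise each other, and still admit generators of coprime prime-power orders. Thus conditions (i)--(iii) of Proposition~\ref{p:AB} transfer from the direct product to the central product, completing the verification in all cases.
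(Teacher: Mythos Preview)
Your proposal is correct and follows essentially the same approach as the paper: reduce every case to an application of Proposition~\ref{p:AB} by exhibiting suitable $A,B$, then lift the induced $C_4$ from $\mathcal{E}(H)$ to $\mathcal{E}(G)$ via Lemma~\ref{l:Ma24_Lem1}. The specific choices of generators differ in places---in $(c)$--$(d)$ the paper takes $A=\SL_2(p)$ generated by two order-$p$ elements and $B$ generated by order-$4$ elements, while you reverse the roles; in $(e)$ the paper uses two distinct odd primes $r\mid q+1$ and $s\mid q-1$, whereas you pair $2$-elements against a single odd prime---but these variants are equally valid. Your treatment of $(d)$, explicitly checking that the central quotient is injective on each of $A,B$ because the kernel generator is nontrivial in both coordinates, is in fact more carefully spelled out than in the paper.

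One small wording issue: in $(e)$ you write ``$A=T\times 1$ where $T$ is the Sylow $2$-subgroup \dots\ generated by two distinct involutions''. For $q>4$ the full Sylow $2$-subgroup $T\cong C_2^f$ needs $f>2$ generators, so $A$ cannot literally equal $T$. What you need (and clearly intend) is $A=\langle a_1,a_2\rangle\cong C_2^2$ for two distinct involutions $a_1,a_2\in T$; this is non-cyclic and suffices for Proposition~\ref{p:AB}.
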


\begin{proof}
In all cases it is sufficient to prove that $\mathcal{E}(H)$ contains an induced $4$-cycle.

$(a)$ In this case $H$ is nilpotent and the assertion follows from Theorem \ref{t:Ma24_Thm2}. 

$(b)$ For both groups we take 
$A=A_4 =\langle a_1, a_2\rangle$ with $|a_i|=3$, and 
$B=\langle b_1, b_2\rangle$  with $|b_i|=2$ (and $B=S_3$ 
or $C_2^2$) so that $H=A\times B$. All 
conditions of Proposition ~\ref{p:AB}$(b)$ hold for these 
generators, and the assertion follows  from that result.

$(c)$-$(d)$ In both cases we let 
$A=\langle a_1, a_2\rangle = \SL_2(p)$
where each $a_i$ is an element of order $p$  in the 
first component of $H$, and we let $B=\langle b_1, 
b_2\rangle$ where each $b_i$ is an element of order $4$ in the second component of $H$. Note that $B\cong Q_8$ if 
$p=3$, and $B\cong\SL_2(p)$ if $p>3$. In all cases  
both subgroups $A$ and $B$ are non-cyclic, and $A$ and $B$ centralise each other. Thus $H=AB$ satisfies the conditions 
of Proposition ~\ref{p:AB} $(b)$ and $\mathcal{E}(H)$ contains an induced $4$-cycle.

$(e)$ Since $q=2^f\geq4$, each of $q+1, q-1$ has an odd prime divisor $r, s$, say, and $r\ne s$. Also  $H=A\times B$ where 
$A=\seq{a_1, a_2}= \SL_2(q)$  with each 
$|a_i|=r$ and  $B=\langle b_1, 
b_2\rangle=\SL_2(q)$ with each $|b_i|=s$. Then all the 
conditions of Proposition \ref{p:AB}$(b)$ hold, and the assertion follows from that result.
\end{proof}

\section{enhanced power graphs which are block graphs}\label{sect:block}
 
We begin by recalling some classic terminology from graph theory, as used in Harary's paper \cite{harary}.
Let $\Gamma=(V,E)$ be a graph. 
Then $\Gamma$ is said to be {\bf $2$-connected} if $|V|\geq 2$, $\Gamma$ is connected, and for each $x\in V$ the subgraph $\Gamma-x$ is connected. Now let $\Gamma=(V,E)$ be a connected graph, with $|V|\geq 2.$ 
Then a $2$-connected subgraph $B$  of $\Gamma$ is called a {\bf block} of $\Gamma$ if $B$ is $2$-connected maximal, that is, there exists no $2$-connected subgraph $\Delta$ of $\Gamma$ such that $B\subsetneq \Delta$. Note that a block of $\Gamma$ is necessarily an induced subgraph of $\Gamma$ and contains at least two vertices. For instance, for a $3$-path $\Gamma=(a,b,c)\cong P_3$, the blocks are  $B_1=\Gamma[\{a,b\}]$ and $B_2=\Gamma[\{b,c\}]$, and 
for a $4$-path $\Gamma=(a,b,c,d)\cong P_4$, the blocks are $B_1=\Gamma[\{a,b\}]$,  $B_2=\Gamma[\{b,c\}]$,  and $B_3=\Gamma[\{c,d\}].$

The notion of a block extends naturally  to the case where $\Gamma$ is not connected: we define the blocks of $\Gamma$ as the blocks of those connected components of $\Gamma$ which contain at least two vertices. In particular, the union of the vertex sets of the blocks of $\Gamma$ is not necessarily equal to the whole vertex set of $\Gamma$. 
However, if $\Gamma=(V,E)$ is connected and $|V|\geq 2$, then the union of the vertex sets of the blocks is equal to $V$ and distinct blocks intersect in at most one vertex, \cite[Theorem B]{harary}.

A graph $\Gamma$ is called a {\bf block graph} 
if  every block of $\Gamma$ is a complete graph, see \cite[Theorems 1 and 2]{harary}.
Note that by adding some isolated vertices to a block graph one obtains a new block graph. We observe the following:
\begin{itemize}
    \item \emph{A block graph is chordal}: Indeed, suppose that $\Gamma$ is a block graph and that  $\mathcal{C}\cong C_n$ is a subgraph, with $n\geq 4.$  Then $\mathcal{C}$ is $2$-connected and thus, by definition of a block, its vertices are included in a single block. However, since a block of $\Gamma$ is a complete subgraph, each pair of distinct vertices of $\mathcal{C}$ is adjacent in $\Gamma$ and thus $\mathcal{C}$ is not induced, so $\Gamma$ is chordal.
    \item  \emph{A block graph is not necessarily a cograph}: for example, $\Gamma=(a,b,c,d)\cong P_4$ is not a cograph, but we have seen that each of its blocks is a complete graph on two vertices, and hence $\Gamma$ is a block graph.
\end{itemize}

For the purposes of our paper, it is useful to give the following two characterisations of block graphs. Though they seem to be well-known, we could not find a precise reference in the literature, so for the sake of completeness we provide a proof. Recall that the graph  obtained from $K_4$ by removing one of its edges is called a {\bf diamond}.  

\begin{prop}\label{t:char-block}
Let $\Gamma$ be a graph. Then the following are equivalent.
\begin{itemize}
    \item[$(a)$] $\Gamma$ is a block graph;
    \item[$(b)$] $\Gamma$ is chordal and two distinct maximal cliques share at most one vertex;
    \item[$(c)$] $\Gamma$ is chordal and diamond-free.
\end{itemize}
    
\end{prop}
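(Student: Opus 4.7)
The plan is to prove the cyclic implications $(a)\Rightarrow(b)\Rightarrow(c)\Rightarrow(a)$; chordality is part of $(b)$ and $(c)$, and was already observed to follow from $(a)$ in the discussion preceding the Proposition.

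For $(a)\Rightarrow(b)$, suppose two distinct maximal cliques $Q_1,Q_2$ share two vertices $x,y$. The induced subgraph $\Gamma[Q_1\cup Q_2]$ is $2$-connected, since for any pair of its vertices one finds two internally disjoint paths using $x$ and $y$ as intermediate vertices, so it lies in a single block $B$; maximality of $Q_1$ and $Q_2$ produces $a\in Q_1\setminus Q_2$ and $b\in Q_2\setminus Q_1$ with $\{a,b\}\notin E$, so $B$ is not complete, contradicting $(a)$. For $(b)\Rightarrow(c)$, if $\Gamma[\{a,b,c,d\}]$ is an induced diamond with non-edge $\{a,c\}$, then the triangles $\{a,b,d\}$ and $\{b,c,d\}$ extend to maximal cliques $Q_1,Q_2$: the case $Q_1=Q_2$ forces $\{a,c\}\in E$, a contradiction, while $Q_1\neq Q_2$ gives $|Q_1\cap Q_2|\geq 2$, violating $(b)$.

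The main work is $(c)\Rightarrow(a)$. Assume $\Gamma$ is chordal and diamond-free, take any block $B$, and suppose for contradiction that $B$ is not complete; then $|V(B)|\geq 3$. Pick non-adjacent $u,v\in V(B)$ with $d:=d_B(u,v)$ minimum among non-adjacent pairs in $B$. A shortest $u$--$v$ path $(u=x_0,x_1,\ldots,x_d=v)$ in $B$ yields $\{x_0,x_2\}$ non-adjacent at $B$-distance $2$, so by minimality $d=2$ and $u,v$ share a common neighbor $x_1\in V(B)$. By $2$-connectivity, $B-x_1$ is connected and contains a shortest (hence induced) $u$--$v$ path $P=(u=y_0,y_1,\ldots,y_k=v)$ of length $k\geq 2$. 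If $k=2$, then $\{u,v,x_1,y_1\}$ induces either a diamond (with non-edge $\{u,v\}$, if $\{x_1,y_1\}\in E$) or an induced $C_4$ (if $\{x_1,y_1\}\notin E$), both forbidden.

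Assume $k\geq 3$ and consider the cycle $C=(u,x_1,v,y_{k-1},\ldots,y_1,u)$ of length $k+2\geq 5$. Shortness of $P$ in $B$ and in $B-x_1$ rules out chords of $C$ of the forms $\{y_i,y_j\}$, $\{u,y_j\}$ with $j\geq 2$, and $\{v,y_j\}$ with $j\leq k-2$, so every chord of $C$ has the form $\{x_1,y_j\}$. Chordality forces such a chord; choosing the least index $i$ with $\{x_1,y_i\}\in E$, the cycle $(u,y_1,\ldots,y_i,x_1,u)$ is chordless of length $i+2$, hence must be a triangle, forcing $i=1$. The same argument applied to the residual cycle $(x_1,v,y_{k-1},\ldots,y_1,x_1)$ of length $k+1\geq 4$, choosing the least $j\geq 2$ with $\{x_1,y_j\}\in E$, makes the chordless cycle $(x_1,y_1,\ldots,y_j,x_1)$ a triangle, forcing $j=2$. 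On $\{u,x_1,y_1,y_2\}$ every pair is an edge except $\{u,y_2\}$ (absent by shortness of $P$), so these four vertices induce a diamond, contradicting $(c)$. The principal obstacle in this direction is this iterated chord-forcing step: one must carefully eliminate all but one type of chord of $C$, then apply chordality to successively smaller residual cycles in order to locate both $\{x_1,y_1\}$ and $\{x_1,y_2\}$ as edges so that the diamond emerges.
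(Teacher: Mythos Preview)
Your proof is correct. The implications $(a)\Rightarrow(b)$ and $(b)\Rightarrow(c)$ follow essentially the same lines as the paper's argument (your version of $(a)\Rightarrow(b)$ exhibits a non-edge inside the block rather than contradicting clique-maximality, but this is cosmetic).

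For $(c)\Rightarrow(a)$, however, your approach genuinely differs from the paper's. The paper does not prove this implication directly: it simply cites \cite[Proposition~1]{ba} (Bandelt--Mulder). Your argument is a self-contained proof using shortest paths inside a block and an iterated chord-forcing step. The key points --- that the minimum distance between non-adjacent vertices of a block is $2$, that $2$-connectivity yields a second $u$--$v$ path avoiding $x_1$, and that chordality applied to successively smaller subcycles pins down the edges $\{x_1,y_1\}$ and $\{x_1,y_2\}$ to produce a diamond on $\{u,x_1,y_1,y_2\}$ --- are all correctly carried out. One small remark: when you write ``shortness of $P$ in $B$ and in $B-x_1$'', only shortness in $B-x_1$ is actually used (and is all you have claimed), so you might drop the ``in $B$''. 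What your approach buys is that the proposition becomes fully self-contained, whereas the paper's appeal to \cite{ba} keeps the write-up shorter but relies on an external source.
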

\begin{proof}$(a)\Rightarrow (b).\quad$
Assume that $\Gamma$ is a block graph. By our discussion above, $\Gamma$ is chordal. If $\Gamma$ is a complete graph then condition $(b)$ holds. So suppose that $\Gamma$ is not complete and let 
$X\neq Y\subseteq V$  be distinct maximal cliques of $\Gamma.$
Note that $Y\subsetneq X$ by the maximality of $Y$, and hence $X\cup Y\supsetneq X$.  
We need to show that $|X\cap Y|\leqslant
1$. Suppose to the contrary that $|X\cap Y|\geq 2$. Then $\Gamma[X\cup Y]$ is $2$-connected and thus is contained in a block $B$ of $\Gamma$. Since $\Gamma$ is a block graph, $B$ is complete with  $B=\Gamma[Z]$ for some $Z\subseteq V$ such that  $Z\supseteq X\cup Y.$ Since $X\cup Y\supsetneq X$, this implies that $Z\supsetneq X$, contradicting the maximality of the clique $X$. 

$(b)\Rightarrow (c).\quad$ Let $\Gamma=(V,E)$ be a graph satisfying condition $(b).$ Then $\Gamma$ is chordal and we need to show that $\Gamma$ is diamond-free. Assume to the contrary that the induced subgraph $\mathcal{D}\leq \Gamma$ is a diamond with vertex set $D=\{a,b,c,d\}\subseteq V$ and edge set given by all the $2$-subsets of $D$ except $\{a,c\}.$ 
Then $\Gamma$ contains the  cliques $K_a\coloneq\{a,b,d\}$ and $K_c\coloneq\{b,c,d\}$. Let $\hat K_a$ be a maximal clique containing $ K_a$ and $\hat K_c$ a maximal clique containing $ K_c$. Then $c\notin \hat K_a$ because $\{a,c\}\notin E$ while $c\in \hat K_c$. Thus $\hat K_a\neq \hat K_c$  and $\hat K_a\cap \hat K_c\supseteq \{a,b,d\}\cap \{b,c,d\}=\{b,d\}.$ Thus the maximal cliques $\hat K_a$ and $\hat K_c$ share at least two vertices, which is a contradiction.

$(c)\Rightarrow (a).\quad$ This follows  from \cite[Proposition 1]{ba}.
\end{proof}

Note that, in Proposition \ref{t:char-block}, the property of being chordal is essential. For example, in the graph $\Gamma=([8],E)$ with $E$ given by the edges in the $8$-cycle $(1,2,\dots,8,1)$ together with those in the $4$-cycle $(2,4,6,8,2)$, 
any two distinct maximal cliques share at most one vertex, but the only block is the graph $\Gamma$ itself, which is not a clique.
For an enhanced power graph, the property of being a block graph can be instead 
characterised without reference to the property of being chordal (see Theorem \ref{p:partition}). Here the class of groups admitting a nontrivial partition comes into play. 
A {\bf nontrivial partition} of a non-identity group $G$ is a set $\mathcal{P}=\{H_1, H_2,\dots, H_n\}$ of $n\geq 2$ proper nontrivial subgroups of $G$ such that $G=\bigcup_{i=1}^n H_i$ and  
$H_i\cap H_j=1$ for distinct $i, j\in [n].$
For a broad survey about partitions of finite groups, the reader can refer to \cite{Zappa}.

For the following remark, recall the definition of $\mathcal{M}(G)$ given in \eqref{e:M}.

\begin{remark}\label{r:partn}
{\rm Suppose that $G$ is a  non-cyclic group with nontrivial partition 
$\mathcal{P}=\{H_1, H_2,\dots, H_n\}$. We note the following.
\begin{enumerate}
\item[$(i)$] For $i\in [n]$, if $H_i$ is cyclic, then $H_i\in\mathcal{M}(G)$. 
\item[$(ii)$] If all the $H_i$ are cyclic then $\mathcal{P}=\mathcal{M}(G)$. 
\end{enumerate}
}
\end{remark}

\begin{proof}
$(i)$ Suppose that $H_i\in \mathcal{P}$ is cyclic and   $H_i<\seq{z}$ for some $\seq{z}\in\mathcal{M}(G)$. As $\mathcal{P}$ is a partition $z\in H_j$ for some $j\ne i$. This however implies  that $H_j\cap H_i=H_i\neq 1$, which contradicts the fact that $\mathcal{P}$ is a partition.

$(ii)$ By part $(i)$, $\mathcal{P}\subseteq \mathcal{M}(G)$. Conversely, for each $\seq{z}\in\mathcal{M}(G)$, the generator $z$ lies in some $H_i$ and since $H_i$ is cyclic it follows that $H_i=\seq{z}$, so $\seq{z}\in\mathcal{P}$.
\end{proof}
There are two further interesting class of groups coming into play when characterising the groups whose enhanced power graph is a block graph, the class of C-tidy groups introduced in \cite{bai} and the class of cyclic-transitive groups introduced in \cite{Lewis}. We recall the definition of those classes.
Let $G$ be a group. For $x\in G$ consider the set $\Cyc(x)\coloneq \{y\in G: \seq{y,x}\ \hbox{is cyclic}\}$ introduced in \cite{Wepsic0}. Clearly, we have $\Cyc(x)=N[x]$ and $\Cyc(G)=\cap_{x\in G}\Cyc(x)$. The group $G$ is called:
\begin{itemize}
    \item a {\bf C-tidy group} if, for every $x\in G\setminus \Cyc(G)$, $\Cyc(x)$ is a cyclic;  
 \item a {\bf cyclic-transitive group} if, for every $x,y,z\in G\setminus\{1\},$ $\seq{x,y}$ and $\seq{y,z}$ cyclic implies $\seq{x,z}$ cyclic.
\end{itemize}

For the final main result of this section, recall also the definition of {\rm sl}$(\mathcal{E}(G))$ given in Definition \ref{d:simplicial}.
\begin{theorem}\label{p:partition} Let $G$ be a  group. Then the following are equivalent.
\begin{itemize}
    \item [$(a)$] $\mathcal{E}(G)$ is diamond-free;
    \item [$(b)$] every nontrivial cyclic  subgroup of $G$ is contained in a unique maximal cyclic subgroup;
    \item [$(c)$] {\rm sl}$(\mathcal{E}(G))\supseteq G\setminus\{1\}$;
    \item [$(d)$] $G$ is cyclic or $G$
admits a nontrivial partition consisting of cyclic subgroups;
    \item [$(e)$]  $\mathcal{E}(G)$ is a block graph and a cograph in which 
any two maximal cliques meet in the identity 
element of $G$;
    \item [$(f)$] $\mathcal{E}(G)$ is a block graph;
    \item[$(g)$] $G$ is cyclic or $G$ is a {\rm C}-tidy group with $\Cyc(G)=1$;
    \item[$(h)$] $G$ is cyclic-transitive.
\end{itemize}
\end{theorem}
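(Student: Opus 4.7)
The plan is to treat condition $(c)$ as the central pivot. I would establish $(b) \Leftrightarrow (c)$ by a direct appeal to Lemma~\ref{l:simplicial} applied to each non-identity element of $G$, then derive $(a) \Leftrightarrow (c)$ via a clean diamond construction, then $(c) \Leftrightarrow (d)$, close the block-graph portion via the chain $(c) \Rightarrow (e) \Rightarrow (f) \Rightarrow (a)$, and finally link $(c)$ to $(g)$ and $(h)$ independently. This routes all seven non-trivial equivalences through a single, group-theoretically transparent condition.

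For $(c) \Leftrightarrow (a)$: if some non-identity $x$ fails to be simplicial, choose non-adjacent $y_1, y_2 \in N[x]$; one checks that $\{1, x, y_1, y_2\}$ consists of four pairwise distinct vertices (using $y_i \neq 1$ since $1$ is a star vertex, $y_i \neq x$ because $y_i \in N(x)$, and $y_1 \neq y_2$ because they are non-adjacent), and the induced subgraph on these vertices is $K_4$ minus the edge $\{y_1, y_2\}$, hence a diamond. Conversely, if an induced diamond has non-edge $\{a, c\}$, then each of the ``middle'' vertices $b, d$ has non-adjacent neighbours and so is non-simplicial; they cannot both equal $1$, which contradicts $(c)$.

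The equivalence $(c) \Leftrightarrow (d)$ is direct: under $(c)$, the members of $\mathcal{M}(G)$ pairwise intersect in $\{1\}$ and cover $G$, so either $|\mathcal{M}(G)| = 1$ (cyclicity) or they form a nontrivial partition; for the converse, Remark~\ref{r:partn}$(ii)$ identifies the partition with $\mathcal{M}(G)$. For $(c) \Rightarrow (e)$: the max-clique intersection condition is just $(b)$ together with Lemma~\ref{alipur2}; the absence of an induced $P_4$ follows because by Lemma~\ref{l:general}$(iii)$ no such path can contain $1$, yet by Lemma~\ref{l:path-simp}$(i)$ its internal vertices cannot be simplicial, contradicting $(c)$; chordality follows analogously from Lemma~\ref{l:general}$(iii)$ and $(iv)$; Proposition~\ref{t:char-block} then yields the block graph property. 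The implication $(e) \Rightarrow (f)$ is immediate, and $(f) \Rightarrow (a)$ is a direct consequence of Proposition~\ref{t:char-block} (a block graph is diamond-free).

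For $(c) \Leftrightarrow (h)$, condition $(h)$ says precisely that on $G \setminus \{1\}$ the relation ``$\seq{x, y}$ is cyclic'' is transitive; together with its automatic reflexivity and symmetry, this is equivalent to each connected component of $\mathcal{E}(G) - 1$ being a clique, which by Lemma~\ref{alipur2} is equivalent to every non-identity element lying in a unique maximal cyclic subgroup. For $(c) \Leftrightarrow (g)$, I would use $N[x] = \Cyc(x)$, so that by Lemma~\ref{l:simplicial} the cyclicity of $\Cyc(x)$ is equivalent to the simpliciality of $x$; when $G$ is non-cyclic and $(c)$ holds, any non-identity element of $\Cyc(G)$ would be both a star vertex and simplicial, forcing $\mathcal{E}(G)$ to be complete by Remark~\ref{r:star-simplicial} and hence $G$ to be cyclic via Lemma~\ref{l:cyc}$(iii)$, a contradiction; so $\Cyc(G) = 1$ and C-tidyness follows, with the converse being immediate. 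The main obstacle I anticipate is not mathematical depth but bookkeeping: carefully handling the cyclic versus non-cyclic dichotomy in $(d)$ and $(g)$, and verifying vertex distinctness in the diamond construction so that the induced subgraph really is a diamond rather than a triangle or a $K_4$.
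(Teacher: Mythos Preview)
Your proposal is correct and complete. The logical routing is sound, the distinctness checks in the diamond construction are handled properly, and each equivalence you claim follows from the lemmas you cite.

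Your approach differs from the paper's in two noteworthy respects. First, the paper proves $(d)\Leftrightarrow(g)$ and $(d)\Leftrightarrow(h)$ by quoting external results (\cite[Proposition~2.3]{bai} and \cite[Theorem~2.2]{Lewis}), whereas you establish $(c)\Leftrightarrow(g)$ and $(c)\Leftrightarrow(h)$ directly from the paper's own machinery (Lemma~\ref{l:simplicial}, Remark~\ref{r:star-simplicial}, Lemma~\ref{l:cyc}). This makes your argument self-contained. Second, in proving $(e)$ the paper first shows $\mathcal{E}(G)$ is a cograph and then invokes Theorem~A to obtain chordality; you instead derive chordality (and the cograph property) directly from $(c)$ via Lemma~\ref{l:general} and Lemma~\ref{l:path-simp}, bypassing Theorem~A entirely. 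The paper's route has the virtue of illustrating Theorem~A in action and acknowledging the prior literature on C-tidy and cyclic-transitive groups; yours is more elementary and keeps the proof of Theorem~\ref{p:partition} logically independent of Theorem~A.
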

\begin{proof} 
  Throughout the proof we write  $\Gamma\coloneq \mathcal{E}(G)$.
 
$(a)\Rightarrow (b).\quad$ We show that the negation of $(b)$ implies the negation of $(a).$  Suppose that some $x\in G\setminus\{1\}$ is contained in two distinct maximal cyclic subgroups $\seq{a}, \seq{b}\in \mathcal{M}(G)$. If $\{a,b\}$ were an edge of $\Gamma$ then $\seq{a,b}$ would be a cyclic subgroup properly containing at least one of $\seq{a}$ or $\seq{b}$, contradicting the definition of $\mathcal{M}(G)$. Thus  
$\{a,b\}\notin E(\Gamma)$, and hence $\Gamma[1,a,b,x]$ is a diamond.

$(b)\Rightarrow (c).\quad$ This follows from Lemma \ref{l:simplicial} (part $(b)\Rightarrow (a)$).

$(c)\Rightarrow (d).\quad$ Assume that {\rm sl}$(\mathcal{E}(G))\supseteq G\setminus\{1\}$, and  first suppose that ${\rm sl}(G)= G$. Then, since $1$ is simplicial, the closed neighbourhood  
$N_\Gamma[1]=G$ induces a complete graph. Thus $\Gamma$ is complete and $G$ is cyclic, by Lemma~\ref{l:cyc}$(iii)$, so $(d)$ holds. Thus we may assume that sl$(\mathcal{E}(G))=G\setminus \{1\}.$ Then $\Gamma = \Gamma[N_\Gamma[1]]$ is not complete and so $G$ is not cyclic by Lemma~\ref{l:cyc}$(iii)$. We claim that $ \mathcal{M}(G)$ is a nontrivial partition of $G$. Now $|\mathcal{M}(G)|\geq 2$ since $G$ is not cyclic, and clearly $G=\bigcup_{H\in \mathcal{M}(G)} H$. Assume to the contrary that $\seq{a}\cap \seq{b}\neq 1$ for some distinct $\seq{a},\seq{b}\in \mathcal{M}(G)$. Then we have  $\seq{x}=\seq{a}\cap \seq{b}\neq \seq{a}, \seq{b}$, for some $x\in G\setminus \{1\}=\mathrm{sl}(\mathcal{E}(G))$, so $a$ and $b$ are distinct neighbors of $x$ and must form an edge of $\Gamma$ since $x$ is simplicial. Thus $\seq{a,b}$ is cyclic and properly contains $\seq{a}$, which is a contradiction.

    $(d)\Rightarrow (e).\quad$ If $G$ is cyclic then $\Gamma$ is complete, by Lemma~\ref{l:cyc}$(iii)$, and thus $\Gamma$ is a block graph and a cograph and part $(e)$ holds. Assume now that $G$ admits a nontrivial partition $\mathcal{P}$ by cyclic subgroups. Then, by Remark~\ref{r:partn}$(ii)$, $\mathcal{P}=\mathcal{M}(G).$
By Lemma \ref{alipur2}, the set of maximal cliques of $\Gamma$ coincides with $\mathcal{M}(G)$. Thus two distinct maximal cliques of $\Gamma$ share exactly one vertex, namely the identity element of $G$, and $\Gamma\setminus\set{1}$ 
is a disjoint union of complete graphs. As a consequence, $\Gamma\setminus\set{1}$ is a cograph
 and so, by Lemma \ref{l:cut-cyc-simp}$(i)$, $\Gamma$ is also a cograph. By Theorem A, $\Gamma$ is also chordal and hence, by Proposition \ref{t:char-block} parts $(a)$ and $(b)$, $\Gamma$ is a block graph.
\smallskip

$(e)\Rightarrow (f).\quad$ This implication is immediate.
\smallskip

$(f)\Rightarrow (a).\quad$ This follows from Proposition \ref{t:char-block} $(a)$ and $(c)$.
\smallskip

$(d)\Longleftrightarrow (g).\quad$ This is the content of \cite[Proposition 2.3]{bai}.
\smallskip

$(d)\Longleftrightarrow (h).\quad$ This is the content of \cite[Theorem 2.2]{Lewis} once observed that, for finite groups, a locally cyclic group is just a cyclic group.
\end{proof}
Lewis and Imperatore in \cite[Theorems 3.2, 4.2]{Lewis} give the complete list of the
cyclic-transitive groups; Baishya in \cite[Theorem 3.13]{bai} gives, three years later, 
the complete list of the C-tidy groups with trivial cycliciser. Of course, by the equivalence of $(g)$ and $(h)$ in Theorem \ref{p:partition}, those two lists coincide, apart the cyclic groups. However, that fact 
is not recognised in \cite{bai}.

For the purposes of our paper, it is remarkable to observe that by those lists and 
by Theorem \ref{p:partition}, we get a complete classification of the groups having enhanced power graph a block graph.
In particular, we are going to use in the following sections the fact that the enhanced power graph of
a dihedral group, and of a Frobenius group  with cyclic kernel and cyclic complement, is a cograph.

Note finally, as another consequence of Theorem \ref{p:partition},  the curious fact that
 for an enhanced power graph, 
being a block graph is equivalent to being diamond-free.

\section{enhanced power graphs of finite nonabelian simple groups}

In this last section we determine the finite nonabelian 
simple groups $G$ for which $\mathcal{E}(G)$ is a cograph, and we obtain partial information about simple groups $G$ for which $\mathcal{E}(G)$ is $C_4$-free. 
This is the content of Theorem B.  Our proof relies on the classification of  finite nonabelian  simple groups (CFSG). 

First we handle the group $A_7$: we show in Example~\ref{r:C_3:D_8} that $\mathcal{E}(A_7)$ is not a cograph by exhibiting a supersolvable subgroup of $A_7$  whose enhanced power graph is not a cograph, and then we give in Proposition~\ref{p:A7}  a new  proof of \cite[Proposition 5]{Ma24} that is a simple application of several general results established in Section~\ref{s:ccprops}. 

\begin{example}\label{r:C_3:D_8}
{\rm Let $a\coloneq (567), x\coloneq (1234)(56), y\coloneq (12)(34) \in A_7$ and let 
$K\coloneq \langle a,x,y\rangle\leq A_7.$ Since $a^3=x^4=y^2=1, a^x=a^{-1}, a^y=a, x^y=x^{-1},$ it is clear that 
$K=\seq{a}\rtimes \seq{x,y}\cong C_3:D_8$ is a supersolvable group of order $24$. 
Moreover, it is easily checked that $(y,a,x^2,x)$
is an induced $4$-path and hence $\mathcal{E}(K)$ is not a cograph. Hence by Lemma~\ref{l:Ma24_Lem1}, $\mathcal{E}(A_7)$ is not a cograph.
}
\end{example}

\begin{prop}\label{p:A7}  $\mathcal{E}(A_7)$ is a chordal graph which is not a cograph.
\end{prop}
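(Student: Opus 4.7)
My approach will be to handle the two assertions separately. The non-cograph part is immediate: Example~\ref{r:C_3:D_8} exhibits a subgroup $K\leq A_7$ isomorphic to $C_3{:}D_8$ with $\mathcal{E}(K)$ containing an induced $P_4$, so Lemma~\ref{l:Ma24_Lem1} transfers this induced $P_4$ to $\mathcal{E}(A_7)$. The remaining content is to prove that $\mathcal{E}(A_7)$ is chordal.

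For chordality I plan to argue by contradiction and invoke Lemma~\ref{l:gencon1} to select an induced $n$-cycle $\mathcal{C}=(a_1,\ldots,a_n,a_1)$ (for some $n\geq 4$) whose vertices all have prime-power order and whose consecutive orders are coprime. Since the element orders of $A_7$ are $\{1,2,3,4,5,6,7\}$ and adjacency with coprime orders forces $|a_i|\cdot|a_{i+1}|$ to be realised as an element order of $A_7$, a quick check shows that the only admissible pair of distinct prime powers is $\{2,3\}$ (the other candidate products $10, 12, 14, 15, 20, 21, 28, 35$ all exceed $7$). Hence $n$ is even and the vertices of $\mathcal{C}$ alternate between involutions and elements of order $3$.

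The next step is to eliminate the double 3-cycle type. For $g=(xyz)(x'y'z')\in A_7$, I would compute that the $S_7$-centraliser of $g$ is $(C_3\times C_3)\rtimes\langle(xx')(yy')(zz')\rangle$ of order $18$, in which the generating involution is odd; hence $C_{A_7}(g)\cong C_3\times C_3$ contains no involutions. Therefore every order-$3$ vertex of $\mathcal{C}$ must be a single 3-cycle.

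All 3-cycles being conjugate in $A_7$, I may assume $a_2=(123)$; then $C_{A_7}(a_2)=\langle(123)\rangle\times A_{\{4,5,6,7\}}$ has involutions precisely the non-identity elements of a Klein four-subgroup $V_4\leq A_{\{4,5,6,7\}}$, and by symmetry I take $a_3=(45)(67)$. An orbit analysis of $(45)(67)$ on $\{1,\ldots,7\}$ shows that the only 3-cycles commuting with $a_3$ are $(123)$ and $(132)$, since any 3-cycle whose support meets $\{4,5,6,7\}$ is inverted, not centralised, by $a_3$. Consequently $a_4\in\langle a_2\rangle$, forcing $\{a_2,a_4\}\in E(\mathcal{E}(A_7))$ and contradicting the induced status of $\mathcal{C}$ because $a_2$ and $a_4$ are at distance two around the cycle. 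The most delicate piece is the centraliser computation ruling out the double 3-cycle type; once that is in hand, the argument is entirely routine.
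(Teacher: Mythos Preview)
Your argument is correct and lands on the same endgame as the paper: reduce a hypothetical induced $n$-cycle ($n\geq 4$) to one whose vertices alternate between involutions and $3$-cycles, then use that a fixed involution commutes with only two $3$-cycles (and vice versa) to force two cycle-vertices to generate the same cyclic subgroup. The paper reaches this endgame slightly differently: rather than invoking Lemma~\ref{l:gencon1}, it applies Lemma~\ref{l:cut-cyc-simp}$(ii)$ to delete $U=M(A_7)\cup\{1\}$, which already removes all elements of order $4,5,6,7$ \emph{and} all double $3$-cycles (since these generate maximal cyclic subgroups), leaving only single $3$-cycles and double transpositions. It then starts from an involution $x_1=(12)(34)$ and observes that its two $3$-cycle neighbours $x_2,x_n$ both lie in $\langle(567)\rangle$, contradicting Lemma~\ref{l:general}$(ii)$.

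Two remarks on your version. First, your separate centraliser computation for double $3$-cycles is unnecessary: Lemma~\ref{l:gencon1}$(1)$ already tells you each $a_i\notin\mathrm{sl}(\mathcal{E}(A_7))$, and since double $3$-cycles lie in $M(A_7)\subseteq\mathrm{sl}(\mathcal{E}(A_7))$ by Corollary~\ref{c:estremiG}, they are excluded for free. Second, the phrase ``any $3$-cycle whose support meets $\{4,5,6,7\}$ is inverted'' is not quite right (e.g.\ $(456)^{(45)(67)}=(547)\neq(456)^{-1}$); the correct justification is that a $3$-cycle commuting with $(45)(67)$ must have its support fixed pointwise by $(45)(67)$, forcing it into $\{1,2,3\}$. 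The conclusion is unaffected.
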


\begin{proof} As shown in Example \ref{r:C_3:D_8}, $\mathcal{E}(A_7)$ is not a cograph.
 By Lemma~\ref{l:cut-cyc-simp}$(ii)$, to prove that $\mathcal{E}(A_7)$ is chordal, it is sufficient to prove that $\Gamma\coloneq \mathcal{E}(A_7)- U$ is chordal, for a certain subset $U\subseteq \mathrm{sl}(\mathcal{E}(A_7))\cup \mathrm{Cyc}(A_7).$ 
 By Corollary~\ref{c:estremiG}, $\mathrm{sl}(\mathcal{E}(A_7))$ contains the set $M(A_7)$  of elements of $G$ that generate a maximal cyclic subgroup: these are all the elements of orders $4,5,6$ or $7$ together with elements which are a product of two disjoint $3$-cycles. Also $1\in \mathrm{Cyc}(A_7)$. We take $U\coloneq M(A_7)\cup\{1\}$ and prove that $\Gamma\coloneq \mathcal{E}(A_7)- U$  is chordal. Note that $V(\Gamma)=A_7\setminus U$ consists only of the $3$-cycles, and the elements which are products of two disjoint $2$-cycles. 
 
 Arguing by contradiction, suppose that $n\geq4$ and that $\mathcal{C}=(x_1,x_2,\dots, x_n,x_1)$ is an induced $n$-cycle of $\Gamma$. Then $\mathcal{C}$ is also an induced cycle of $\mathcal{E}(A_7)$ and we may apply Lemma~\ref{l:general}. By Lemma~\ref{l:general}\,$(vii)$, adjacent vertices in $\mathcal{C}$ have different orders, and hence $n$ is even and we may assume that $|x_i|=2$ for $i\in[n]$ odd and $|x_i|=3$ for $i\in[n]$ even. Since conjugating by elements of $A_7$ sends induced cycles to induced cycles, we may assume further that $x_1=(12)(34)$. As $n\geq 4$,  $x_n$ and $x_2$ are distinct $3$-cycles and both must commute with $x_1$. Thus they belong to $\{(567),(576)\}$ and therefore $\langle x_2\rangle=\langle x_n\rangle=\langle (567)\rangle$, contradicting Lemma~\ref{l:general}\,$(ii)$.  Thus $\mathcal{E}(A_7)$ is chordal.
 \end{proof} 

\begin{corollary}\label{supersolvable} There exists a supersolvable  group whose enhanced power graph is chordal but not a cograph.
\end{corollary}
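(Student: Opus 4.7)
The plan is to exhibit the concrete example already in hand: the subgroup $K = \langle a, x, y\rangle \cong C_3 : D_8$ of $A_7$ constructed in Example~\ref{r:C_3:D_8}. Everything needed has essentially been established in the two preceding results, so the corollary reduces to assembling these pieces.

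First I would verify that $K$ is indeed supersolvable. This was already observed in Example~\ref{r:C_3:D_8}: $K = \seq{a}\rtimes\seq{x,y} \cong C_3 : D_8$ has order $24$, and exhibiting a normal series with cyclic factors (for instance $1 \lhd \seq{a} \lhd \seq{a,x^2} \lhd \seq{a}\seq{x} \lhd K$) shows supersolvability.

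Next, the fact that $\mathcal{E}(K)$ is not a cograph was already established in Example~\ref{r:C_3:D_8} by exhibiting the induced $4$-path $(y,a,x^2,x)$, so there is nothing further to check here.

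Finally, to show that $\mathcal{E}(K)$ is chordal, I would invoke the two results immediately preceding: by Proposition~\ref{p:A7}, $\mathcal{E}(A_7)$ is chordal, and since $K \leq A_7$, Corollary~\ref{c:Ma24_Lem1} (applied in the chordal case) yields that $\mathcal{E}(K)$ is chordal as well, being an induced subgraph of the chordal graph $\mathcal{E}(A_7)$. The group $K$ therefore witnesses the claim, and there is no real obstacle: the corollary is essentially a packaged consequence of Example~\ref{r:C_3:D_8}, Proposition~\ref{p:A7}, and the hereditary property in Corollary~\ref{c:Ma24_Lem1}.
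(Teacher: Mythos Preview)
Your proof is correct and follows exactly the paper's approach: the paper likewise takes $K\leq A_7$ from Example~\ref{r:C_3:D_8}, cites that example for supersolvability and the failure of the cograph property, and then applies Proposition~\ref{p:A7} together with Corollary~\ref{c:Ma24_Lem1} to deduce chordality. Your write-up is slightly more explicit (you spell out a normal series), but the argument is identical.
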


 \begin{proof} Consider the subgroup $K$ of $A_7$ defined in Example \ref{r:C_3:D_8}. We showed that  $\mathcal{E}(K)$ is not a cograph. On the other hand, by Proposition \ref{p:A7} and Corollary \ref{c:Ma24_Lem1}, $\mathcal{E}(K)$ is chordal.
 \end{proof}

Next we show that the result of Cameron~\cite[Proposition 8.7]{GraphsOnGroups} about the groups $\PSL_2(q)$ follows from our work in Section~\ref{sect:block}, giving a completely different proof.

\begin{lemma}\label{l:PSL2}
Let $G=\PSL_2(q)$ with $q\geq4$. Then $\mathcal{E}(G)$ is a block graph and a cograph, and the intersection of any two distinct maximal cyclic subgroups is trivial. 
\end{lemma}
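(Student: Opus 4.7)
The plan is to invoke Theorem \ref{p:partition}, specifically the equivalence of conditions $(d)$, $(e)$, and $(f)$: this tells us that $\mathcal{E}(G)$ is a block graph if and only if $G$ admits a nontrivial partition consisting of cyclic subgroups, and in that case it is also a cograph in which any two maximal cliques meet in the identity. So the whole assertion follows at one stroke once we exhibit such a partition of $G=\PSL_2(q)$.

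First I would recall the classical subgroup structure of $\PSL_2(q)$, with $p$ the defining characteristic and $d=\gcd(2,q-1)$. Every nontrivial element of $G$ is either \emph{unipotent} (lying in a Sylow $p$-subgroup, which is elementary abelian of order $q$) or \emph{semisimple}, in which case it lies in a unique maximal torus $T$, with $T$ cyclic of order $(q-1)/d$ or $(q+1)/d$. The key classical facts I would cite are: each of the two $G$-conjugacy classes of maximal tori is a TI-family of self-centralising cyclic subgroups; the Sylow $p$-subgroups form a TI-family (distinct Sylows have trivial intersection); and tori meet Sylow $p$-subgroups trivially since their orders are coprime to $p$.

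With these facts in hand, the collection
\[
\mathcal{P} \;:=\; \{\,T : T \text{ a maximal torus of } G\,\} \;\cup\; \{\,\langle u\rangle : u\in G\text{ unipotent},\ u\neq 1\,\}
\]
is a partition of $G$ into cyclic subgroups: the tori are cyclic and pairwise trivially intersecting by the structural facts above, while each $\langle u\rangle$ has prime order $p$, and any two distinct order-$p$ subgroups inside the same elementary abelian Sylow $p$-subgroup intersect trivially. Since $q\geq 4$, $G$ is non-cyclic, so $\mathcal{P}$ is nontrivial. Theorem \ref{p:partition}, implication $(d)\Rightarrow(e)\wedge(f)$, then yields that $\mathcal{E}(G)$ is a block graph and a cograph with any two maximal cliques meeting only in $1$; in view of Remark \ref{r:partn}$(ii)$, which identifies such a partition with $\mathcal{M}(G)$, the last property is precisely the statement that any two distinct maximal cyclic subgroups of $G$ intersect trivially.

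The main obstacle is justifying the partition claim itself, i.e. the structural facts about $\PSL_2(q)$ concerning the cyclicity of maximal tori and the TI-properties of tori and of Sylow $p$-subgroups. These are however entirely classical and well documented (going back to Dickson's description of the subgroup lattice of $\PSL_2(q)$), so the argument reduces to combining a known structural result with Theorem \ref{p:partition}.
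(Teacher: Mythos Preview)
Your proposal is correct and follows essentially the same approach as the paper: both argue that $\PSL_2(q)$ admits a nontrivial partition into cyclic subgroups and then apply Theorem~\ref{p:partition}. The only difference is that the paper dispatches the partition claim with a single citation to \cite[II.8.5 Satz]{hup}, whereas you spell out the partition explicitly (maximal tori together with the order-$p$ subgroups) and appeal to the classical Dickson/TI facts.
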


\begin{proof} By \cite[II.8.5 Satz]{hup} the group $G=\PSL_2(q)$ admits a nontrivial partition consisting of cyclic subgroups. 
Thus the result follows from Theorem \ref{p:partition}.
\end{proof}

%
%

To deal with the Suzuki groups, which also admit a nontrivial partition, we use Corollary \ref{c:simil_Ma}.

\begin{lemma}\label{l:suzuki}
Let $G=\Sz(q)$, with $q=2^{2m+1}$ for some $m\geq1$. Then $\mathcal{E}(G)$ is a cograph, and the intersection of any two distinct maximal cyclic subgroups has order at most $2$.
\end{lemma}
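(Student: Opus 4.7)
My plan is to apply Corollary \ref{c:simil_Ma} with the prime $p=2$. To do so, I first need a concrete description of the maximal cyclic subgroups of $G=\Sz(q)$, which I will extract from the standard structure theory of Suzuki groups (see, e.g., Suzuki's original paper or Huppert--Blackburn). Recall that $|G|=q^2(q-1)(q^2+1)$ where $q^2+1=(q-r+1)(q+r+1)$ with $r=2^{m+1}$, and $G$ admits a nontrivial partition whose components are the Sylow $2$-subgroups of $G$ (Suzuki $2$-groups of order $q^2$ and exponent $4$) together with conjugates of three classes of cyclic subgroups of pairwise coprime odd orders $q-1$, $q-r+1$ and $q+r+1$. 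In particular, distinct Sylow $2$-subgroups intersect trivially, and the odd cyclic tori intersect each other trivially since they lie in distinct partition blocks.

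Next I will determine $\mathcal{M}(G)$. The three odd cyclic tori $T_0, T_+, T_-$ of orders $q-1$, $q+r+1$, $q-r+1$ are maximal cyclic in $G$ because their orders are coprime to $2$ and to each other. Inside a Sylow $2$-subgroup $P$, every non-identity element has order $2$ or $4$, and every involution is central in $P$ (this is the defining property of the Suzuki $2$-group of type $A$). Consequently a cyclic subgroup $\langle t\rangle\le P$ of order $4$ is maximal cyclic in $P$; I claim it is maximal cyclic in $G$ as well, since $\langle t\rangle$ is contained only in the Sylow $2$-subgroup $P$ (by the partition property, the only partition block meeting $\langle t\rangle$ nontrivially is $P$), while $P$ itself contains no cyclic subgroup of order greater than $4$. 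So the maximal cyclic subgroups of $G$ are precisely the cyclic subgroups of order $4$ inside the various Sylow $2$-subgroups, together with the $G$-conjugates of $T_0$, $T_+$ and $T_-$.

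Now I will verify the intersection property required by Corollary \ref{c:simil_Ma}. Let $H, K$ be distinct maximal cyclic subgroups. If one has odd order and the other has order $4$, the intersection is trivial. If both have odd order, then by the partition they lie in the same or different blocks; in either case $H\cap K=1$ (distinct blocks meet trivially, and two distinct maximal cyclic subgroups sitting in the same abelian block would have to coincide since the block is a cyclic group). If $H,K$ are both cyclic of order $4$ contained in the same Sylow $2$-subgroup $P$, then $H\cap K\le Z(P)$ and $|H\cap K|\le 2$; if they lie in different Sylow $2$-subgroups, then $H\cap K=1$ since distinct Sylow $2$-subgroups intersect trivially. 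In all cases $|H\cap K|\in\{1,2\}$, so $|H\cap K|=2^a$ with $a\in\{0,1\}$.

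Having checked the hypothesis, Corollary \ref{c:simil_Ma} yields at once that $\mathcal{E}(G)$ is a cograph and, simultaneously, that every pair of distinct maximal cyclic subgroups meets in a subgroup of order at most $2$. The main obstacle is the group-theoretic identification of $\mathcal{M}(G)$, in particular justifying that the cyclic subgroups of order $4$ in a Sylow $2$-subgroup remain maximal cyclic in $G$; this is where the partition of $\Sz(q)$ together with the Suzuki $2$-group structure of a Sylow $2$-subgroup does the work. Once this is in place, the rest is a routine case distinction driven by the coprimality of the relevant orders.
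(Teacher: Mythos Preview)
Your proof is correct and follows essentially the same approach as the paper: both use the standard partition of $\Sz(q)$ into Sylow $2$-subgroups and conjugates of the three cyclic tori of orders $q-1$, $q\pm r+1$, identify the maximal cyclic subgroups accordingly, check that any two distinct ones meet in a $2$-group of order at most $2$, and then invoke Corollary~\ref{c:simil_Ma}. One small point: your justification that $T_0,T_+,T_-$ are maximal cyclic ``because their orders are coprime to $2$ and to each other'' is not quite self-contained; the cleaner reason (and the one the paper uses, via Remark~\ref{r:partn}(i)) is simply that a cyclic member of a nontrivial partition is automatically maximal cyclic.
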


\begin{proof} We refer to \cite[Chapter XI, Section 3]{hup-black} for information about $G =\Sz(q)$. In particular $G<\SL_4(q)$.
Let  $r=2^{m+1}$.  Then  
$|G|=q^2(q-1)(q^2+1)$ and $(q+r+1)(q-r+1)=q^2+1$. Let $U$ 
be a Sylow $2$-subgroup of $G$. Then $U$ has order $q^2$ and exponent $4.$ Let $H$ be the subgroup of 
diagonal matrices in $G$. Then $H$ is isomorphic to 
the multiplicative group of the field $\F_q$ and hence 
$H\cong C_{q-1}.$
By \cite[XI, Theorem 3.10]{hup-black}, $G$ admits the 
partition $\mathcal{P}=\{U^g, H^g, T_1^g, T_2^g:g\in G\},$ 
where $T_1$ is a cyclic maximal torus of order $q+r+1$ and 
$T_2$ is a cyclic maximal torus of order $q-r+1$. As a 
consequence the orders of the elements in $G$ are the divisors of the numbers $4, q-1,q+r+1, q-r+1.$ Note that the orders of $H, T_1,T_2$ are odd and pairwise coprime. Moreover, since $H, 
T_1,T_2$ lie in the partition $\mathcal{P}$, 
these groups are maximal cyclic subgroups, by Remark~\ref{r:partn}$(i)$.
Let $\langle x\rangle$ be an arbitrary maximal cyclic subgroup of 
$G$. If $|x|$ is even, then as $x$ lies in some subgroup of $\mathcal{P}$, we must have   $x\in  U^g$ for some $g\in G$, so $|x|\mid 4$. On the other hand if  
$|x|$ is odd, then $x$ belongs to a conjugate of $H, T_1$, or $T_2$. Since these subgroups are cyclic, and 
$\langle x\rangle$ is maximal cyclic, it follows that 
$\langle x\rangle$ coincides with one of these conjugates. 
Thus the maximal cyclic subgroups of $G$
are given by the conjugates of $H, T_1, T_2$ together with the 
maximal cyclic subgroups of the Sylow $2$-subgroups of $G$.
As a consequence the intersection of two distinct maximal 
cyclic subgroups of $G$ has order at most $2$. By
Corollary \ref{c:simil_Ma}, it follows that $\mathcal{E}(G)$ is 
a cograph.
\end{proof}

We next consider three more infinite families of small rank Lie type simple groups, namely $\PSL_3(q)$ in Lemma~\ref{l:PSL(3,q)_cograph}, $\PSU_3(q)$ in Lemma~\ref{l:PSU(3,q)_cograph}, and ${\rm Ree}(q)$ in Lemma~\ref{l:Ree(q)_cograph}.

\begin{lemma}\label{l:PSL(3,q)_cograph}
Let $G=\PSL_3(q)$ for a prime power $q\geq2$. Then 
\begin{enumerate}
    \item[$(a)$]  $\mathcal{E}(G)$ is a cograph if and only if  $q=2$ or $q=4$. Moreover, for $q\in\{2,4\}$, distinct maximal cyclic subgroups of $\PSL_3(q)$ intersect in a $2$-group.
    \item[$(b)$] If $q\notin\{ 2,4\}$ and $\mathcal{E}(G)$ is $C_4$-free, then $q$ is a prime and $(q-1)/\gcd(3,q-1)$ is a prime power.
\end{enumerate}
\end{lemma}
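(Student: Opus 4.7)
My plan is to treat parts (a) and (b) together, organising by arithmetic conditions on $q$.

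For the ``cograph'' direction of (a), the case $q=2$ reduces immediately to Lemma~\ref{l:PSL2} via $\PSL_3(2)\cong\PSL_2(7)$. For $q=4$, I would verify from the character-table data that the element orders of $\PSL_3(4)$ are $\{1,2,3,4,5,7\}$, all prime powers, so that $\PSL_3(4)$ is an EPPO group and Proposition~\ref{known} applies. The $2$-group intersection property follows from a short Sylow analysis: Sylow $5$ and Sylow $7$ are cyclic, Sylow $3$ is elementary abelian of rank $2$, and in each case distinct maximal cyclic $r$-subgroups of odd prime-power order intersect trivially, leaving only $r=2$, which is tautological.

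For the negative direction of (a) I propose a uniform construction valid for every $q\notin\{2,4\}$, applying Proposition~\ref{p:W}. Let $g=I+E_{12}\in\SL_3(q)$, a unipotent of Jordan type $(2,1)$. A direct matrix computation shows $C_G(g)=U\rtimes\hat C_{q-1}$ of order $(q-1)q^{3}$, where $U$ is a $p$-group of class $2$ with $g\in Z(U)$ and $\hat C_{q-1}=\{\mathrm{diag}(a,a,a^{-2}):a\in\F_q^*\}$. Set $A=\hat C_{q-1}\cdot\langle g\rangle$, cyclic of order $(q-1)p$; check maximality in $\SL_3(q)$ by verifying that no semisimple element of $C_G(g)$ exceeds order $q-1$ and that no unipotent in $\SL_3(q)$ exceeds order $p$ (for $p\geq 3$) or $4$ (for $p=2$). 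Take $B=A^u$ for a $U$-conjugate chosen so that $\hat C_{q-1}\cap\hat C_{q-1}^u=1$, giving $A\cap B=\langle g\rangle$. Finally take $C$ to be the non-split (Coxeter) torus $C_{q^2-1}$ inside $C_G(t_0)\cong\GL_2(q)$, where $t_0=\hat a^{(q-1)/2}$ if $p$ is odd and $t_0=\hat a$ if $p=2$; identifying $C=\{(M_X,X^{-(q+1)}):X\in\F_{q^2}^*\}$ one checks $\hat C_{q-1}\leq C$, so $A\cap C=\hat C_{q-1}$. Passing to $\PSL_3(q)$, the intersections $\bar A\cap\bar B$ and $\bar A\cap\bar C$ have coprime orders $p$ and $v=(q-1)/\gcd(3,q-1)$; since $v\geq 2$ precisely when $q\notin\{2,4\}$, these are incomparable, and Proposition~\ref{p:W} gives that $\mathcal{E}(\PSL_3(q))$ is not a cograph.

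For part (b) I would prove the contrapositive in two steps. \textbf{Step 1 (}$v$ \textbf{is a prime power):} if $v$ has two distinct prime divisors $r,s$, then a careful count of $r$-parts and $s$-parts in the split torus $T_1'\cong(C_{q-1})^2/C_{\gcd(3,q-1)}$, treating separately the possibility $3\in\{r,s\}$, yields $C_r^2\times C_s^2\leq T_1'$, and Corollary~\ref{c:AB_application}(a) produces an induced $C_4$ in $\mathcal{E}(\PSL_3(q))$. \textbf{Step 2 (}$q$ \textbf{is prime):} write $q=p^f$ with $f\geq 2$ and $q\neq 4$, and aim to realise one of the configurations in Corollary~\ref{c:AB_application} inside $\PSL_3(q)$, using a central product arising from a subfield subgroup $\PSL_3(p^{f/\ell})$ (for a prime $\ell\mid f$) together with an appropriate commuting cyclic factor in $C_{\PSL_3(q)}(\PSL_3(p^{f/\ell}))$, or (when $f=2$) the unitary subgroup $\PSU_3(\sqrt q)$.

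The main obstacle is Step 2 of part (b). The combined conditions ($q=p^f$, $f\geq 2$, $q\neq 4$, $v$ a prime power) force $(p,f)$ into a very restricted list (by Mih\u{a}ilescu together with Mersenne-type constraints in even characteristic), namely $q\in\{8,9,16,32,128,\dots\}$, and for each such $q$ the single-torus argument of Step~1 fails because only one odd prime appears at the required rank in $\PSL_3(q)$. Each surviving $q$ will need a bespoke configuration, most likely obtained by pairing an elementary abelian $2$-subgroup of the unipotent radical with a semisimple subgroup of the subfield centraliser, or by reducing to the enhanced power graph of $\PSU_3(\sqrt q)$ when $f$ is even.
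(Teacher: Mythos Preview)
Your treatment of part~(a) and of Step~1 in part~(b) matches the paper's argument up to conjugate choices: where you take $g = I + E_{12}$ and $\hat C_{q-1} = \{\mathrm{Diag}(a,a,a^{-2})\}$, the paper uses $a = I + E_{13}$ and $x = \mathrm{Diag}(w, w^{-2}, w)$, builds $B$ as a unipotent conjugate of $A$, and takes $C$ to be a Singer cycle in $C_G(x)\cong\GL_2(q)$. You gloss over the descent from $\SL_3(q)$ to $\PSL_3(q)$ when $3\mid q-1$ (one must check that $\bar A, \bar B, \bar C$ remain maximal cyclic and that the intersections behave as claimed), but since $Z \leq A \cap B \cap C$ this is routine, and the paper spells it out.

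The real gap is your Step~2 for part~(b). The subfield/unitary strategy cannot work as stated: the centraliser in $\PSL_3(q)$ of a subfield subgroup $\PSL_3(q_0)$ (or of $\PSU_3(\sqrt q)$) is trivial by Schur's lemma, so there is no commuting factor to pair with it, and you are forced into the ad hoc case analysis you describe. The paper's argument is uniform and much simpler, because it \emph{reuses the configuration from part~(a)}. The key observation is that when $q = p^f$ with $f \geq 2$, the root subgroup $R = \{I + \alpha E_{13} : \alpha \in \F_q\}$ is elementary abelian of $\F_p$-rank $f \geq 2$, so it contains a non-cyclic $A = \langle a_1, a_2\rangle \cong C_p^2$ with $a_1 = I + E_{13}$ and $a_2 = I + wE_{13}$. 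The torus element $b_1 = x$ and the conjugating unipotent $c = I + E_{23}$ from part~(a) both centralise \emph{all of $R$}, not just the single element $a_1$; hence so does $b_2 = b_1^c$, and $B = \langle \bar b_1, \bar b_2\rangle$ centralises $A$. The part-(a) calculation (which used only $q \notin \{2,4\}$) already gives $\bar b_2 \notin \langle \bar b_1\rangle$, so $B$ is non-cyclic. With $|a_i| = p$ coprime to $|\bar b_j|\mid q-1$, the sequence $(\bar a_1, \bar b_1, \bar a_2, \bar b_2, \bar a_1)$ is an induced $4$-cycle by the easy direction of Proposition~\ref{p:AB}. No number-theoretic case split is needed.
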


\begin{proof}
$(a)$ If  $q\in\{2, 4\}$,  the orders of the nontrivial elements
of $G$  are prime numbers, or four (if $q=4$), see \cite[pp. 3, 23]{Atlas}, and hence  distinct maximal cyclic subgroups of $G$ intersect in a $2$-group. This also implies that $G$ is an EPPO group,  and so by Proposition \ref{known},  $\mathcal{E}(G)$ is a cograph. That $\mathcal{E}(G)$ is a cograph when $q=2$ also follows from Lemma \ref{l:PSL2} since $\PSL_3(2)\cong \PSL_2(7)$. Assume now that $q\notin\{ 2,4\}.$ 
Suppose first that $\gcd(3,q-1)=1$, so that $G=\PSL(3,q)=\SL(3,q)$. 
We show that $\mathcal{E}(G)$ is not a cograph by applying Proposition \ref{p:W}. 
Let $w\in\mathbb{F}_q$ be a generator of $\mathbb{F}_q^*$, and note that $w^{-2}\neq w$  since $q\notin \{ 2,4\}$. Define $a, x\in G$ by 
\begin{equation}\label{e:a-x}
a\coloneq\begin{pmatrix}
    1 &0 &1\\
    0& 1& 0\\
    0&0&1
\end{pmatrix}\quad \mbox{and}\quad x\coloneq\mathrm{Diag}(w,w^{-2},w).
\end{equation}
Then $|x|=q-1$, $|a|=p$, and $a$ lies in the 
centre of the Sylow $p$-subgroup $P$ of $G$ consisting of the upper unitriangular matrices. Also, $x$ 
commutes with $a$, and therefore $A:=\seq{a,x}$ is a cyclic 
subgroup of $G$ of order $p(q-1)$. 

We claim that $A$ is a maximal cyclic subgroup of $G$. Straightforward computations show that 
\begin{equation}\label{e:Ca}
 C_G(a)=\set{\begin{pmatrix}
    \alpha &\beta &\gamma\\
    0& \alpha^{-2}& \delta\\
    0&0&\alpha
\end{pmatrix}\vert \alpha,\beta,\gamma,\delta\in\mathbb{F}_q, \alpha\neq 0
}=P:\seq{x}   
\end{equation}
and 
$$
C_G(x)=\set{\begin{pmatrix}
    \alpha & 0 &\beta \\
    0& d^{-1} & 0 \\
    \gamma & 0 & \delta
\end{pmatrix}\vert \ 
Y:=\begin{pmatrix}
    \alpha  &\beta \\
    \gamma &  \delta
\end{pmatrix}\in \GL_2(q), \ 
d=\mathrm{det}(Y) 
}\cong \GL_2(q).
$$
Any maximal cyclic subgroup of $G$ containing $A$ must lie in $C_G(A)$, namely
$$
C_G(A)=C_G(a)\cap C_G(x)=\set{\begin{pmatrix}
    \alpha &0 &\gamma\\
    0& \alpha^{-2}& 0\\
    0&0&\alpha
\end{pmatrix}\vert \alpha,\gamma\in\mathbb{F}_q, \alpha\neq 0
}=A$$
and thus $A$  is maximal 
cyclic, proving the claim. 
Now let 
\begin{equation}
    \label{e:g}
g\coloneq\begin{pmatrix}
    1 & 0 & 0\\
    0 & 1 & 1\\
    0 & 0 & 1
\end{pmatrix}\in P.    
\end{equation}
Then $|g|=p$ and, and it follows from \eqref{e:Ca} that $g\in C_G(a)$, so $g^{-1}ag=a$.
Now  
$$
b:= g^{-1}xg=\begin{pmatrix}
    w & 0      & 0\\
    0 & w^{-2} & w^{-2}-w\\
    0 &   0    & w
\end{pmatrix}
$$
and since $w^3\neq 1$, we have $b\not\in C_G(A) = A$. 
Thus $b\neq x$, and $b$ is  a second element of $G$ of order $q-1$, and 
$b$ commutes with $a$ since both $g$ and $x$ do. Let $B:=\seq{a,b}$. Then $B=A^g$ is a maximal cyclic subgroup of $G$ of order $p(q-1)$ and $B\neq A$. 
Note that $A\cap B$ contains $a$ but does not contain $x$.
As we noted above, $C_G(x)\cong \GL_2(q)$ and under this isomorphism $\seq{x}$ corresponds to the centre of $\GL_2(q)$. Therefore $C_G(x)$ contains a maximal cyclic subgroup $C$ isomorphic to a Singer cycle subgroup of $\GL_2(q)$, of order $q^2-1$, and $C$ contains $\seq{x}$ as a proper subgroup. Note that $C$ is a maximal cyclic subgroup of $G$ as well, since any cyclic subgroup containing $C$ must centralise $x$ and hence lie in $C_G(x)$. 
Now $A\cap C$ contains $x$, and $A\cap C$ does not contain $a$, since otherwise these two maximal cyclic subgroups of $G$ would be equal, but $|A|\ne |C|$. Thus since $a\in A\cap B$, it follows that  $A\cap B\not\leq A\cap C$.  Moreover, $A\cap C\not\leq A\cap B$, as otherwise $A\cap B$ contains $\seq{x,a}=A$ and thus $A=B$, a contradiction. Thus condition $(b)$ of Proposition \ref{p:W} does not hold, and it follows from Proposition~\ref{p:W} that $\mathcal{E}(G)$ is not a cograph, completing the proof in this case.

Assume now that $\gcd(3,q-1)=3$ with $q\neq 4$. Let $\lambda=w^{(q-1)/3}$, where $w$ is the generator of $\mathbb{F}_q^*$ above, so $|\lambda|=3$.  Let $H=\SL_3(q)$ and $Z:=Z(H)=\seq{\lambda I_3}$, and set $\overline{H} :=H/Z=\PSL_3(q)=G$.   We use the same approach as before and we define the following elements of $\overline{H}$:
\[
\overline{a}:= aZ,\ \overline{x}:= xZ,\ \overline{g}:= gZ,\ \overline{b}:= \overline{x}^{\overline{g}} = x^gZ =  bZ,\ 
\]
with $a, x, g$ as in \eqref{e:a-x} and \eqref{e:g}, and $b=x^g$ as above.  
Then $|\overline{x}|=(q-1)/3$, 
$|\overline{a}|=p$ and 
$\overline{A} =\seq{\overline{x},\overline{a}}$ is a cyclic subgroup of $\overline{H}$ of order $p(q-1)/3$. 

For $y\in\{a,x\}$, it is clear that $\overline{C_H(y)}\subseteq C_{\overline{H}}(\overline{y})$, and we claim that equality holds.
Now $\overline{a}$ has order $p$. Suppose that $\overline{h}=hZ$ centralises $\overline{a}$. Then $\overline{a}=\overline{a}^{\overline{h}} = \overline{a^h}$, and hence $a^h=az$ for some $z\in Z$. Since $|a|=|a^h|=p$ and $z$ is central, it follows that $1=(a^h)^p = a^pz^p=z^p$, and since $|z|$ divides $|Z|=3\ne p$, it follows that  $z=1$, and hence $h\in C_H(a)$, so   $\overline{C_H(a)}= C_{\overline{H}}(\overline{a})$. Now we consider $y=x$. Since $x$, acting on $\F_q^3$, has an $\omega$-eigenspace of dimension $2$ and an $\omega^{-2}$-eigenspace of dimension $1$, these eigenspaces are both left invariant by $\overline{x}=xZ$, and hence also by  $C_{\overline{H}}(\overline{x})$. From the form of $C_H(x)$ above we see that  $\overline{C_H(x)}$ is the full stabiliser in $\overline{H}$ of this decomposition, and hence $\overline{C_H(x)}= C_{\overline{H}}(\overline{x})$.


Arguing as in the previous case, $\overline{A}=A/Z$ is a maximal cyclic subgroup of $\overline{H}$. 
Similarly $\overline{B} :=\seq{\overline{a},\overline{b}} = B/Z$ is a maximal cyclic subgroup of $\overline{H}$ and $\overline{A}\cap \overline{B}$  contains $\overline{a}$  but does not contain 
$\overline{x}$. Finally, $C_{\overline{H}}(\overline{x}) = C_H(x)/Z\cong 
\GL(2,q)/Z_0$, where $|Z_0|=3$, and hence $C_{\overline{H}}(\overline{x})$ contains maximal cyclic subgroups of $\overline{H}$ of order $(q^2-1)/3$ which correspond to quotients of Singer cycles of $\GL(2,q)$
. One of these is $\overline{C} =C/Z$ with $C$ as above, and $\overline{C}$ contains $\overline{x}$;  therefore $\overline{A} \cap \overline{C}$  contains $\overline{x}$ and does not contain $\overline{a}$. Therefore 
$\overline{A}\cap \overline{B}\not\leq \overline{A}\cap \overline{C}$ and $\overline{A}\cap \overline{C}\not\leq \overline{A}\cap \overline{B}$. Thus as in the previous case, condition $(b)$ of Proposition \ref{p:W} fails and hence $\mathcal{E}(\overline{H})$ is not a cograph.
\smallskip

$(b)$ 
We first prove that if $q$ is not a prime then $\mathcal{E}(G)$ is not $C_4$-free. Let $d=\gcd(3,q-1)$, and let $q=p^f$ with $p$ a prime and $f\geq 2,$ so $q\geq 9$ (since $q\ne 4$). Let $w$ be a generator of $\mathbb{F}_q^*$. Throughout the proof, for $a\in \SL_3(q)$, we denote by $\overline{a}$ its projection in $G$. Consider the following elements of 
$\SL_3(q)$ (with $a$ as in \eqref{e:a-x}),
$$a_1 = a=\begin{pmatrix}
1 & 0 & 1\\
0 & 1 & 0\\
0 & 0 & 1
\end{pmatrix}
\quad \textrm{and} \quad  
a_2=\begin{pmatrix}
1 & 0 & w\\
0 & 1 & 0\\
0 & 0 & 1
\end{pmatrix}.$$ 
Let $A\coloneq\seq{\overline{a_1},\overline{a_2}}\leq G$ and note that $|\overline{a_1}|=|\overline{a_2}|=p$. 
Since $q\neq p,$ the field elements $w$ and $1$ are $\mathbb{F}_p$-linearly independent, and so $A \cong C_p^2$. Now consider the element 
$b_1=x\coloneq\mathrm{Diag}(w,w^{-2},w)$ from \eqref{e:a-x}. As we  showed in part $(a)$,  $\overline{b_1}$ has order $(q-1)/d >1$ and in particular $\gcd(|\overline{b_1}|,|\overline{a_i}|)=1$.
By \eqref{e:Ca}, $b_1\in C_G(a_1)$ and it is easily checked that also $b_1\in C_G(a_2)$. It follows that $\overline{b_1}$ commutes with $A$. Now let
$$
c=\begin{pmatrix}
1 & 0 & 0\\
0 & 1 & 1\\
0 & 0 & 1
\end{pmatrix}.
$$ 
Again $c\in C_G(a_1)$ by \eqref{e:Ca} and it is easy to check that also $c\in C_G(a_2)$, so $\overline{c}$ commutes with $A$. Next define 
\[
b_2:= c^{-1}b_1c = \begin{pmatrix}
w & 0 & 0\\
0 & w^{-2} & w^{-2}-w\\
0 & 0 & w
\end{pmatrix}
\quad \mbox{so we have}\quad \overline{b_2}\coloneq\overline{c}^{-1}
\overline{b_1}\overline{c}.
\]
%
Since $q\geq 9$, the $b_2$-entry $w^{-2}-w\ne 0$, and so 
$\overline{b_2}\not\in\seq{\overline{b_1}}$. Thus for $B\coloneq\seq{\overline{b_1},\overline{b_2}}$, the subgroup $H=AB$ satisfies the conditions of Proposition \ref{p:AB}, and this implies that $\mathcal{E}(G)$ is not $C_4$-free. 

To prove the last assertion of part $(b)$, we assume that $(q-1)/\gcd(3,q-1)$ has distinct prime divisors $r, s$, and we prove that $\mathcal{E}(G)$ is not $C_4$-free. One of $r, s$, say $r$, is different from $3$. Note that if $s=3$, then $9$ divides $q-1$. Let  $a, b\in\mathbb{F}_q^*$ be such that $|a|=r$, and if 
$s\ne3$ then $|b|=s$, while if $s=3$ then $|b|=9$. Set 
\[
h\coloneq \Diag(ab,1,(ab)^{-1}), \ g:= \Diag(1,ab,(ab)^{-1}), \  \mbox{and}\ H:=\langle h,g\rangle \cong \langle h\rangle \times \langle g\rangle.
\]
Then $H\leq \SL_3(q)$, and 
\[
H\cap Z=\{ h^ig^i=\Diag((ab)^i, (ab)^i, (ab)^{-2i})\mid i\in[r|b|], (ab)^{3i}=1\}.
\]

Thus if $s\ne 3$ then $H\cap Z=1$, and hence $\PSL_3(q)$ has a subgroup $H/(H\cap Z)\cong H\cong C_{rs}\times C_{rs}\cong C_r^2\times C_s^2$. Similarly, if $s=3$ then $H\cap Z=\langle (hg)^{3r}\rangle  \cong C_3$ and $\PSL_3(q)$ has a subgroup $H/(H\cap Z)\cong  C_r^2\times C_3\times C_9$, which in turn has a subgroup $C_r^2\times C_3^2$. In either case, it follows from  Corollary~\ref{c:AB_application} that  $\mathcal{E}(G)$ is not $C_4$-free. 
\end{proof}

Next we consider the simple groups $\PSU_3(q)$. Since $\PSU_3(2)$ is soluble, we consider only the case $q\geq 3$.

\begin{lemma}\label{l:PSU(3,q)_cograph}
Let $G=\PSU_3(q)$ for a prime power $q\geq3$. Then $\mathcal{E}(G)$ is not a cograph.
\end{lemma}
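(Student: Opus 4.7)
The plan is to apply Proposition~\ref{p:W} by exhibiting three pairwise distinct maximal cyclic subgroups $\overline{A},\overline{B},\overline{C}$ of $G=\PSU_3(q)$ whose intersections $\overline{A}\cap\overline{B}$ and $\overline{A}\cap\overline{C}$ are incomparable. The construction parallels that for $\PSL_3(q)$ in Lemma~\ref{l:PSL(3,q)_cograph}, with the diagonal torus, unipotent centraliser, and Singer cycle of $\GL_2(q)$ replaced by their unitary analogues inside $H=\SU_3(q)$. I would first construct the three subgroups in $H$ and then descend to $G=H/Z$, where $Z=Z(H)$ is cyclic of order $d=\gcd(3,q+1)$.

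Working with $H$ equipped with the anti-diagonal Hermitian form $J$, I would take a long root element $a\in Z(U)$ (where $U$ is the Sylow $p$-subgroup of upper unitriangular matrices in $H$) of order $p$, together with $t=\Diag(\lambda,\lambda^{-2},\lambda)$ where $\lambda\in\F_{q^2}^*$ has order $q+1$. A direct calculation yields $C_H(a)=U\rtimes T_a$, with $T_a$ the cyclic subgroup of order $q+1$ consisting of the diagonal elements normalising $\langle a\rangle$, and shows $t\in T_a$; hence $A\coloneq\langle a,t\rangle$ is cyclic of order $p(q+1)$. An analogous computation gives $C_H(t)\cong\mathrm{GU}_2(q)$, realised on the nondegenerate Hermitian plane $\mathrm{span}(e_1,e_3)$, and $\langle t\rangle$ corresponds to the centre of this $\mathrm{GU}_2(q)$; consequently $\langle t\rangle$ sits inside a Coxeter torus $C$ of $\mathrm{GU}_2(q)$, cyclic of order $q^2-1$. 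For $B$ I would conjugate $A$ by any $g\in U\setminus Z(U)$: such a $g$ centralises $a$ but does not preserve the eigenspace decomposition of $t$, so $B\coloneq A^g\neq A$ with $a\in A\cap B$, while the unique subgroup of order $q+1$ in $B$, namely $\langle t^g\rangle$, does not contain $t$. Since $|a|=p$ is coprime to $|C|=q^2-1$, we have $a\notin C$, so that $t\in A\cap C$ but $a\notin A\cap C$.

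Maximality of $A$ in $H$ would follow from the explicit description $C_H(A)=Z(U)\cdot T_a$, an abelian group isomorphic to $C_p^{\,f}\times C_{q+1}$ (where $q=p^f$), in which any cyclic subgroup containing $A$ has order at most $p(q+1)=|A|$. Maximality of $B=A^g$ is then automatic, and maximality of $C$ is standard because a generator of $C$ is a regular semisimple element of $H$ whose centraliser in $H$ equals $C$. The three intersections at the level of $H$ therefore satisfy $A\cap B\not\leq A\cap C$ (witnessed by $a$) and $A\cap C\not\leq A\cap B$ (witnessed by $t$).

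To pass from $H$ to $G=H/Z$ I would observe that $Z\leq\langle t\rangle\leq A\cap B\cap C$: every scalar $\mu I_3\in Z$ satisfies $\mu^{q+1}=1$ and $\mu^d=1$, and such scalars are precisely the powers of $t$ lying in $Z$. Hence the projections $\overline{A},\overline{B},\overline{C}$ are pairwise distinct cyclic subgroups of $G$, and the incomparability of the intersections is inherited by the images of $a$ and $t$. The main obstacle is verifying that $\overline{A},\overline{B},\overline{C}$ remain \emph{maximal} cyclic in $G$; I expect to handle this via a lifting argument: any cyclic overgroup $\langle\overline{z}\rangle$ of $\overline{A}$ in $G$ corresponds to an abelian subgroup $\langle z\rangle Z$ of $H$ which centralises $A$, hence lies in $C_H(A)=Z(U)\cdot T_a$, and a direct bound on cyclic quotients of this $p$-by-cyclic abelian group forces $\langle\overline{z}\rangle=\overline{A}$; the arguments for $\overline{B}$ and $\overline{C}$ are parallel. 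Proposition~\ref{p:W} then yields that $\mathcal{E}(G)$ is not a cograph.
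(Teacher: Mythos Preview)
Your approach is correct and essentially identical to the paper's: both apply Proposition~\ref{p:W} with a long root element $a\in Z(U)$, the diagonal element $t=\Diag(\lambda,\lambda^{-2},\lambda)$ of order $q+1$ (the paper's $x$), the conjugate $B=A^g$ for $g\in U\setminus Z(U)$, and a cyclic torus of order $q^2-1$ containing $t$; indeed, with the anti-diagonal Hermitian form your ``Coxeter torus of $\mathrm{GU}_2(q)$'' on $\mathrm{span}(e_1,e_3)$ is exactly the diagonal torus $\langle y\rangle$ of $H$ that the paper uses for $C$. Your lift-and-descend argument via $Z\leq\langle t\rangle$ and the explicit centraliser $C_H(A)=Z(U)\langle t\rangle$ is sound and in fact handles all $q\geq 3$ uniformly, whereas the paper gives a separate Atlas computation for $q=8$.
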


\begin{proof}
Our basic reference for the proof of this lemma is \cite[II 10.12 Satz]{hup}.  We write $q=p^f$, with $p$ a prime and $f\geq1$, and  set $\mathbb{F}=\mathbb{F}_{q^2}$. Let $w$ be a generator of $\mathbb{F}^*$, $H=\SU_3(q)$, $Z=Z(H)$, and for each subgroup $L\leq H$ let $\overline{L}= LZ/Z$. Thus $G=\overline{H}=\PSU_3(q)$. Without loss of generality we take $G, H$ 
to preserve the unitary form with Gram matrix 
$$
J=\begin{pmatrix}
    0 & 0& 1\\
    0 & 1 & 0\\
    1 & 0 & 0
\end{pmatrix}.
$$
So the space $\F^3$ has ordered basis 
$\mathcal{B}=(e,z,f)$ such that, with respect to $J$, the unitary form satisfies $(e,e)=(f,f)=(e,z)=(f,z)=0, (e,f)=(z,z)=1$. The basis $\mathcal{B}$ is called the \emph{standard} or \emph{unitary basis} in \cite[Proposition 2.3.2]{KL}. 
 Note that $d:=|Z|=\gcd(3,q+1)$. 
Let 
$$
\overline{a}=\begin{pmatrix}
    1 & 0 & \gamma\\
    0 & 1 & 0\\
    0 & 0 & 1
\end{pmatrix}Z$$
where $\gamma $ is a fixed element of $\mathbb{F}^*$ such that $\gamma+\gamma^q=0$.  
Then $|\overline{a}|=p$ and $\overline{a}$ lies in centre of the  Sylow $p$-subgroup $Q$ of $G$ given by 
$$
Q=\set{\begin{pmatrix}
    1 & \alpha & \beta\\
    0 & 1 & -\alpha^q\\
    0 & 0 & 1
\end{pmatrix}Z\ \vert\ \alpha,\beta\in \mathbb{F}, \beta^q+\beta+\alpha^{1+q}=0}.
$$
Now $|Q|=q^3$, $Q'=Z(Q)=\set{\begin{pmatrix}
    1 & 0 & \beta\\
    0 & 1 & 0\\
    0 & 0 & 1
\end{pmatrix}Z\ \vert\ \beta^q+\beta=0}$. Moreover, $N_{G}(Q)=Q:L$ where $L=\seq{\overline{y}}$ with 
$$
\overline{y}=\begin{pmatrix}
    w^{-q} & 0 & 0\\
    0 & w^{q-1} & 0\\
    0 & 0 & w
\end{pmatrix}Z
$$ 
so that $|\overline{y}|=(q^2-1)/d$. 
Then, by easy computations, one shows that  
$$
C_{G}(\overline{a})=Q:\seq{\overline{x}}
$$ 
where $\overline{x}\coloneq\overline{y}^{q-1}$ has order $(q+1)/d$. Note that $\overline{x}$ is the projection modulo $Z$ of the diagonal matrix $\Diag(\lambda,\lambda^{-2},\lambda)$ where $\lambda=w^{q-1}\in \mathbb{F}_{q^2}$.  Assume first that $q\neq 8$, so that $\overline{x}\neq 1$. Then an 
easy computation shows that 
$C_Q(\overline{x})=Z(Q)$ and, 
since $Z(Q)$ has exponent $p$, 
the cyclic subgroup  $A := \seq{\overline{a},\overline{x}}$ is a maximal cyclic subgroup of $G$. Note that $|A|=p(q+1)/d$. Now, for any fixed $\overline{g}\in Q\setminus N_{G}(\seq{\overline{x}})$, the subgroup   $B := A^{\overline{g}}$ is also a maximal cyclic subgroup of $G$.  Since $\seq{\overline{x}}\neq \seq{\overline{x}^{\overline{g}}}$ the cyclic subgroup $B$ is different from $A$. Note also that  $A\cap B$ contains $\overline{a}$ and not $\overline{x}$. Now $\seq{\overline{x}}<\seq{\overline{y}}$ and if $C$ is any maximal cyclic subgroup of $G$ containing $\overline{y}$ then $|C|$ is a multiple of $|\overline{y}|=(q^2-1)/d$ and hence $C\neq A$ and $C\neq B$. Moreover $A\cap C$ contains $\overline{x}$ and does not contain $\overline{a}$. Thus the condition in Proposition \ref{p:W}$(b)$ fails for $A, B , C$, and hence $\mathcal{E}(G)$ is not a cograph.\\
We deal now with the remaining case $G=\PSU_3(8)$. 
Looking at the Atlas \cite[p. 64]{Atlas} we see that $G$ has a unique conjugacy class of involutions, and admits  elements of orders $4$, $6$ and $21$, which are necessarily maximal because there are no elements in $G$ of order strictly divisible by $4$, $6$ or  $21$. Moreover, $G$ has two conjugacy classes of elements of order $3$, one having centraliser of order $81$ and the other of order $1512$. Let $c\in G$ with $|c|=21$ and $C=\seq{c}\cong C_{21}.$ Consider $y= c^7$ and observe that  $|y|=3$ and $7\mid |C_G(y)|$. By Atlas information, we deduce that $|C_G(y)|=1512$ and hence there exists an involution $x\in G$ centralising $y$. Define $A=\seq{x, y}\cong C_6$. By the fact that all the involutions are conjugate, 
$x$ must be the power of some $b\in G$ with $|b|=4$. Define $B=\seq{b}\cong C_4.$ The groups $A,B,C$ are cyclic maximal and obviously distinct. Moreover, we have $A\cap C=\seq{y}$ and $A\cap B=\seq{x}$. Thus, the condition in Proposition \ref{p:W}$(b)$ fails for $A, B , C$, and $\mathcal{E}(G)$ is not a cograph.

\end{proof}

The last family we treat separately is the family of Ree groups ${\rm Ree}(q)=\null^2G_2(q)$, where $q=3^{2m+1}$, and since  ${\rm Ree}(3)=\PSL_2(8):3$ is not simple, we take $q>3$.

\begin{lemma}\label{l:Ree(q)_cograph}
Let $G={\rm Ree}(q)=\null^2G_2(q)$ with $q=3^{2m+1}>3$. 
Then $\mathcal{E}(G)$ is not a cograph.
\end{lemma}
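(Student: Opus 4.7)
The plan is to apply Proposition~\ref{p:W} by exhibiting three pairwise distinct maximal cyclic subgroups $A, B, C$ of $G = {}^2G_2(q)$ for which the intersections $A\cap B$ and $A\cap C$ are incomparable subgroups of $A$. The structural input I shall use is standard for Ree groups (see, e.g., \cite[XI, Section~13]{hup-black}): fix an involution $t\in G$, so that $C_G(t)= \langle t \rangle \times L$ with $L \cong \PSL_2(q)$; and fix a Sylow $3$-subgroup $Q$ of $G$ normalised by $t$ in such a way that the Sylow $3$-subgroup $P$ of $L$ coincides with $Z(Q)$ (both elementary abelian of order $q$) and $C_Q(t)=Z(Q)$.

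Fix $a\in Z(Q)=P$ of order $3$ and set $A:=\langle ta\rangle\cong C_6$. Since $C_L(a)=P$, one has $C_G(A)=C_G(t)\cap C_G(a)=\langle t\rangle \times P$; because cyclic subgroups of this abelian group have order dividing $6$, the only one containing $A$ is $A$ itself, so $A$ is maximal cyclic in $G$. Next, let $T_-$ be a maximal cyclic torus of $L$ of order $(q-1)/2$, which is odd since $q\equiv 3\pmod 4$. Then $B:=\langle t\rangle\times T_-$ is cyclic of order $q-1$ and maximal cyclic in $G$, because $T_-$ is self-centralising in $L$. As $|B|=q-1$ is coprime to $3$, it follows that $a\notin B$, giving $A\cap B=\langle t\rangle$.

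To construct $C$, pick any $g\in Q\setminus Z(Q)$. Then $a\in Z(Q)$ forces $g\in C_G(a)$, while $g\notin C_Q(t)=Z(Q)$ forces $g\notin C_G(t)$. Hence $t':=gtg^{-1}$ is an involution of $G$ distinct from $t$ that commutes with $a$, and so $C:=\langle t'a\rangle\cong C_6$ is cyclic of order $6$; the same argument used for $A$, applied to the conjugate $L':=gLg^{-1}$ (which still contains $a$), shows that $C$ is maximal cyclic in $G$. The unique involution of $C$ is $t'\neq t$, hence $t\notin C$ and therefore $A\cap C=\langle a\rangle$. The two subgroups $\langle t\rangle=A\cap B$ and $\langle a\rangle=A\cap C$ of $A\cong C_6$ are incomparable, so Proposition~\ref{p:W} implies that $\mathcal{E}(G)$ is not a cograph.

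The main technical point is only the structural data for ${}^2G_2(q)$ gathered in the first paragraph, namely the identification $P=Z(Q)$ for a suitable Sylow $3$-subgroup $Q$ normalised by $t$, together with $C_Q(t)=Z(Q)$; once this is in place the construction of $A$, $B$ and $C$ is a direct analogue of the constructions used for $\PSL_3(q)$ and $\PSU_3(q)$ in Lemmas~\ref{l:PSL(3,q)_cograph} and \ref{l:PSU(3,q)_cograph}.
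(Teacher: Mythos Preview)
Your argument is correct and applies Proposition~\ref{p:W} just as the paper does, but the three maximal cyclic subgroups you build are different from those in the paper. The paper works on the $2$-local side: it takes a four-group $T=\langle t_1,t_2\rangle$ together with the cyclic Hall subgroup $\langle a\rangle\cong C_{(q+1)/4}$ that it centralises, obtaining $A=\langle t_1,a\rangle$ and $B=\langle t_2,a\rangle$ of order $(q+1)/2$ with $A\cap B=\langle a\rangle$, and then uses the split torus of $L\cong\PSL_2(q)$ inside $C_G(t_1)$ to produce $C$ of order $q-1$ with $A\cap C=\langle t_1\rangle$. You instead work on the $3$-local side: from the identification $C_Q(t)=Z(Q)=P$ (a fact recorded explicitly by Ward and in \cite[XI,\,\S13]{hup-black}) you get $A=\langle ta\rangle\cong C_6$ maximal cyclic, a conjugate $C=\langle t'a\rangle$ produced by moving $t$ with an element of $Q\setminus Z(Q)$, and $B=\langle t\rangle\times T_-$ of order $q-1$ inside $C_G(t)$. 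Your construction mirrors the $\PSL_3(q)$ and $\PSU_3(q)$ arguments more closely, which is a pleasant uniformity; the paper's version has the advantage that the only structural input it needs about the $3$-part is absent---everything happens in $C_G(t)$ and in $N_G(T)$, so one avoids invoking the equality $C_Q(t)=Z(Q)$. Both routes are short once the relevant centraliser structure is quoted.
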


\begin{proof}
Our basic reference for this case is \cite{LeNu}. Since $3^2\equiv 1\pmod{8}$, we have $q+1\equiv 4 \pmod{8}$, so $(q-1)/2$  and $(q+1)/4$ are both odd integers.
The following hold in $G$. 
\begin{enumerate}
    \item Sylow $2$-subgroups of $G$ are of the form $C_2^3$ and are self-centralising.
    \item Any pair of $2$-subgroups of the same order are conjugate in $G$.
    \item If $x$ has order $2$ then $C_G(x)=\seq{x}\times H$ with $H\cong \PSL_2(q)$.
    \item If $T=C_2^2<G$, then there exists a cyclic Hall subgroup $L$ of order $(q+1)/4$ and an element $y\in G$ of order $6$ such that $N_G(T)=T:(L:\seq{y})$ and $C_G(L)=T\times L$.
\end{enumerate}
 
Let $T=\seq{t_1,t_2}= C_2^2$, and $\seq{a}=C_{(q+1)/4}$ centralising $T$, as in (4). Then 
$A :=\seq{t_1,a}$ and $B :=\seq{t_2,a}$ are distinct cyclic subgroups of $G$ of order $(q+1)/2$. Since, by (4), $C_G(a)=T\times \seq{a}$ the subgroups 
$A$ and $B$ are maximal cyclic subgroups of $G$.
Note that $A\cap B=\seq{a}$ and does not contain $t_1$. Also, by (3), $C_G(t_1)=\seq{t_1}\times H$ with $H\cong\PSL_2(q)$. There exists an element $h\in H$ of odd order $(q-1)/2$  centralising $t_1$. Thus $C:=\seq{t_1,h}$ is a cyclic subgroup of order $q-1$, and $C$ is a maximal cyclic subgroup of $C_G(t_1)$ since $\seq{h}$ is a maximal cyclic subgroup of $H=\PSL_2(q)$. Hence $C$ is maximal cyclic in $G$ since every cyclic subgroup of $G$  containing $C$ must lie in $C_G(t_1)$. Note that 
$A\cap C$ contains $t_1$ and not $a$. Therefore 
$A\cap B\not\leq A\cap C$ and $A\cap C\not\leq A\cap B$. Thus the condition in Proposition \ref{p:W}$(b)$ fails for $A, B , C$, and hence $\mathcal{E}(G)$ is not a cograph.
\end{proof}

We are now ready to give the formal proof of Theorem B. For convenience we recall the subset
$$
\mathcal{L}_1=\{\PSL_2(q), \  {\rm Sz}(q),\ \PSL_3(4)\},
$$
where $q>3$ in the case of $\PSL_2(q)$, and $q=2^{2m+1}>2$ for ${\rm Sz}(q)$. We prove that $\mathcal{L}_1$ is the set of all finite nonabelian simple groups $G$ such that $\mathcal{E}(G)$ is a cograph. Recall also the subset
\begin{equation} \label{e:list2}
\mathcal{L}_1\cup\set{A_7, M_{11}, J_1, \PSL_3(q) , \PSU_3(q),  {\rm Ree}(q), 
{}^2F_4(2)'},
\end{equation}  
where $q$ is an odd prime  and $(q-1)/(\gcd(3,q-1)$ is a prime power in the case of $\PSL_3(q)$;  $q$ is a prime power and $q>2$ for $\PSU_3(q)$; 
and $q=3^{2m+1}>3$ for ${\rm Ree}(q)$. We prove that this set is a superset of the list $\mathcal{S}$ of finite nonabelian simple groups $G$ for which  $\mathcal{E}(G)$ is $C_4$-free.

\medskip\noindent
\emph{Proof of Theorem B.}\quad 
Let $G$ be a finite nonabelian simple group. Our proof strategy is the following: we prove that 
\begin{center}
    \emph{$\mathcal{E}(G)$ is a cograph if and only if $G\in \mathcal{L}_1$},
\end{center} 
that is,  the conditions  (1) and (2) of Theorem B are equivalent. It follows from Lemmas~\ref{l:PSL2}, \ref{l:suzuki} and~\ref{l:PSL(3,q)_cograph} that all the groups in the list 
$\mathcal{L}_1$ satisfy condition (3) of Theorem B, that is, distinct  maximal cyclic subgroups of $G$ intersect in a $2$-subgroup. The fact that this condition  (3) implies that $\mathcal{E}(G)$ is a cograph is a consequence of Corollary 
\ref{c:simil_Ma}. Thus we will have proved that the three conditions (1)--(3) of Theorem B are equivalent. 

For the final assertion we note that, by Theorem A, for each $G\in\mathcal{L}_1$ we will have proved that  $\mathcal{E}(G)$ is chordal and hence $C_4$-free, so that $G$ lies in the list  $\mathcal{S}$ of simple groups $G$ such that $\mathcal{E}(G)$ is $C_4$-free. Also by Proposition~\ref{p:A7}, $\mathcal{E}(A_7)$ is chordal so $A_7\in\mathcal{S}$.  In the course of our proof, for a simple group $G$, we sometimes conclude that $\mathcal{E}(G)$ is not a cograph by proving that  $\mathcal{E}(G)$ is not $C_4$-free and applying Theorem A. The groups for which we fail to determine that `$\mathcal{E}(G)$ is not $C_4$-free' are precisely the groups in \eqref{e:list2}, proving that the list \eqref{e:list2} is indeed a superset for $\mathcal{S}$. Also, in the course of the proof, we will show that $\mathcal{E}(G)$ is not $C_4$-free for $G=M_{11}$ and $G=J_1$. That establishes the first inclusion for $\mathcal{S}$. The final assertion of Theorem B will thereby be established.

We consider the simple groups $G$ according to the various families given by the 
CFSG.  Our proof often uses the following consequence of  Lemma~\ref{l:Ma24_Lem1}: 

\begin{center}
    \emph{If $\mathcal{E}(H)$ is not $C_4$-free, for some subgroup $H<G$, then \\
    $\mathcal{E}(G)$ is not $C_4$-free and hence also $\mathcal{E}(G)$ not a cograph.}
\end{center} 
We will often use this implication without further reference.


\medskip\noindent
\emph{Alternating Groups. \quad }
Here $G=A_n$ with $n\geq5$. By \cite[Proposition 4]{Ma24}, $\mathcal{E}(G)$ is a cograph if and only if $n\in\{5,6\}$, and we note that $A_5\cong \PSL_2(4)$ and 
$A_6\cong \PSL_2(9)$ are listed in $\mathcal{L}_1$. 
Moreover, by Proposition \ref{p:A7},  $\mathcal{E}(A_7)$ is chordal and hence $C_4$-free. For $n\geq 8$, $\mathcal{E}(A_n)$ is not $C_4$-free because  it contains the induced $4$-cycle
$$\mathcal{C}=((12)(34), (567), (13)(24), (568),(12)(34)).$$

\medskip\noindent
\emph{Classical groups. \quad}
Suppose now that $G$ is a nonabelian finite simple classical group over a field of order $q=p^f$, for $p$ a prime and $f\geq1$.

\medskip\noindent
\emph{Case: $G=\PSL_n(q)$.\quad } If $n=2$ then $\mathcal{E}(G)$ is a cograph by Lemma \ref{l:PSL2} and if $n=3$ then $\mathcal{E}(G)$ is a cograph only when $q=2$ and when $q=4$ by Lemma \ref{l:PSL(3,q)_cograph}. Also by Lemma \ref{l:PSL(3,q)_cograph}, if $q\ne 2,4$ and $\mathcal{E}(G)$ is $C_4$-free, then $q$ is an odd prime and $(q-1)/\gcd(3,q-1)$ is a prime power -- and these groups are in the list in \eqref{e:list2}. Assume now that $n\geq 4$, and  view $G$ as the group $S/Z$  where $S=\SL_n(q)$ and $Z\cong C_{d}$ is the subgroup of scalars in $S$, where $d=\gcd(n,q-1)$. If $p=2$ then $S=\SL_n(q)$ has a subgroup $H=\SL_4(2)\cong A_8$ such that $H\cap Z=1$, so $G$ has a subgroup $A_8$. Thus $\mathcal{E}(G)$ is not a cograph and is not $C_4$-free, since $\mathcal{E}(A_8)$ is not $C_4$-free. Suppose now that $q$ is odd, and consider 
the subgroup  $D=\{\mathrm{Diag}(a,b,I_{n-4})\mid a,b\in \SL_{2}(p)\}$ of $S$. Then $DZ/Z\cong \SL_{2}(p)\times \SL_{2}(p)$ if $n\geq 5$, or  $DZ/Z\cong \SL_{2}(p)\circ \SL_{2}(p)$ if $n=4$, and so, by Corollary~\ref{c:AB_application},  
$\mathcal{E}(G)$ is not a cograph and is not $C_4$-free.


\medskip\noindent
\emph{Case: $G=\PSU_n(q)$.\quad } Here $n\geq 3$ and $(n,q)\ne (3,2)$. 
If $n=3$, then by Lemma \ref{l:PSU(3,q)_cograph}, $\mathcal{E}(G)$ is not a cograph, and these groups are in the list in \eqref{e:list2}.
Now assume that $n\geq 4$,  and view $G$ as the group $S/Z$  where $S=\SU_n(q)$ and $Z\cong C_{d}$ is the subgroup of scalars in $S$, where $d=\gcd(n,q+1)$. Note that $S<\SL_n(q^2)$. 
Consider the subgroup $D=\{\mathrm{Diag}(a,b,I_{n-4})\mid a,b\in \SU_{2}(p)\}$. As $\SU_{2}(p)=\SL_{2}(p)$ we have, for $q$ odd, $DZ/Z\cong \SL_{2}(p)\times \SL_{2}(p)$ if $n\geq 5$ and  $DZ/Z\cong \SL_{2}(p)\circ \SL_{2}(p)$ if $n=4$; so by Corollary~\ref{c:AB_application}, $\mathcal{E}(G)$ is not $C_4$-free in these cases. Assume therefore that $q$ is even. If $q>2$ we consider the subgroup $H=\{\Diag(a,b,I_{n-4})\mid a,b\in \SU_{2}(q)\}$ of $S$. Here  $\SU_{2}(q)=\SL_{2}(q)\cong \PSL_2(q)$ and $HZ/Z\cong \PSL_2(q)\times\PSL_2(q)$,  
and by Corollary~\ref{c:AB_application}$(e)$, $\mathcal{E}(G)$ is not $C_4$-free. This leaves the case $q=2$. If $n\geq 6$ then, by \cite[pp. 46, 115]{Atlas}, $A_8<\Sp_6(2)<\PSU_6(2)\leq \PSU_n(2)$, which by \cite[Proposition 3]{Ma24} implies that $\mathcal{E}(G)$ is not $C_4$-free.
Hence $n\in\set{4, 5}$. Since $\PSU_5(2)=\SU_5(2)> \SU_4(2)=\PSU_4(2)$, it is sufficient to show that $\PSU_4(2)$ is not $C_4$-free. So assume that $G=\PSU_4(2)=\SU_4(2)$. We will apply Proposition \ref{p:AB}. Let $\mathbb{F}=\mathbb{F}_4$ be the field of order $4$, and let $w$ be a generator of $\mathbb{F}^\ast$. Take also the  the unitary form with Gram matrix $I_4$,  and consider the following unitary matrices:
$$a_1=\begin{pmatrix}
    1 & 0 & 0 & 0\\
    0 & 0 & 1 & 0\\
    0 & 1 & 0 & 0\\
    0 & 0 & 0 & 1
\end{pmatrix}, \,  
a_2=\begin{pmatrix}
    1 & 0 & 0 & 0\\
    0 & 0 & w & 0\\
    0 & w^{2} & 0 & 0\\
    0 & 0 & 0 & 1
\end{pmatrix}, $$
$$b_1=\begin{pmatrix}
    w & 0 & 0 & 0\\
    0 & 1 & 0 & 0\\
    0 & 0 & 1 & 0\\
    0 & 0 & 0 & w^2
\end{pmatrix}, \,  
b_2=\begin{pmatrix}
    w & 0 & 0 & 0\\
    0 & w & 0 & 0\\
    0 & 0 & w & 0\\
    0 & 0 & 0 & 1
\end{pmatrix}.$$
Then $|a_1|=|a_2|=2$ and $|b_1|=|b_2|=3$, $[a_i,b_j]=1$ for every $i,j\in\set{1,2}$ and $A=\seq{a_1,a_2}$, 
$B=\seq{b_1,b_2}$ are both not cyclic and centralise each other. Hence, by Proposition \ref{p:AB}, $\mathcal{E}(G)$ is not $C_4$-free.

\medskip\noindent
\emph{Case: $G=\PSp_n(q)'$.\quad }  Here $n\geq 4$, and  we may assume that $(n,q)\ne (4,2)$ since $\PSp_n(q)'\cong\PSL_2(9)\cong A_6$ has already been treated. Thus $G=\PSp_n(q)$, and we view $G$ as the group $S/Z$  where $S=\Sp_n(q)$ and $Z\cong C_d$, with $d=\gcd(2,q-1)$, is the subgroup of scalars in $S$. We will examine a subgroup $H=\Sp_2(q)\times \Sp_2(q)\cong \SL_2(q)\times \SL_2(q)$ of $S$, with $H$ acting on a nondegenerate $4$-subspace $U$ and fixing the orthogonal complement $U^\perp$ pointwise.  Suppose first that $q=2^f$ is even, so $Z=1$ and $G$ has a subgroup isomorphic to $H$. If $q>2$ then, by  Corollary~\ref{c:AB_application}$(e)$, $\mathcal{E}(G)$ is not  $C_4$-free. 
If $q=2$ then $n\geq 6$ and by \cite[page 187]{KL}, $A_8\leq \Omega^+_6(2) < \Sp_6(2)\leq G$, and again $\mathcal{E}(G)$ is not $C_4$-free as $\mathcal{E}(A_8)$ contains an induced $C_4$. Now assume that $q=p^f$ is odd,  so $Z\cong C_2$, and we note that $H$ has a subgroup $H_0=\Sp_2(p)\times \Sp_2(p)\cong \SL_2(p)\times \SL_2(p)$. If $n\geq 6$ then $Z\cap H_0=1$ and so $G$ has a subgroup  $\SL_2(p)\times \SL_2(p)$; while if $n=4$ then $Z<H_0$ and $G$ has a subgroup $H_0/Z\cong \SL_2(p)\circ \SL_2(p)$. Thus Corollary~\ref{c:AB_application}  implies that $\mathcal{E}(G)$ is not $C_4$-free.  

\medskip\noindent
\emph{Case: Orthogonal groups.\quad }  Here $G=\POm^\varepsilon_n(q)$, where $n\geq 7$, $\varepsilon=\pm$ if $n$ is even, and $\varepsilon=\circ$ with $q$ odd if $n$ is odd. We view $G$ as the group $S/Z$  where $S=\Omega_n^\varepsilon(q)$ and $Z$ is the subgroup of scalars in $S$. Note that $|Z|\leq 2$. Suppose first that $q=p^f$ is odd. It follows from \cite[Lemma 4.1.1$(ii)$]{KL} that $S$ contains a subgroup $H=\Omega_7^\circ(p)$ acting naturally on a nondegenerate $7$-subspace $U$ and fixing $U^\perp$ pointwise. Also, by \cite[pp.~185--187]{KL}, the action of $A_8$ on its deleted permutation module $\mathbb{F}_{p}^7$ yields an embedding of $A_8$ in $H$ and therefore $\mathcal{E}(G)$ is not $C_4$-free.
Thus we may assume that $q=2^f$  and hence $\varepsilon=\pm$ with $n\geq 8$ even. We use a similar argument: it follows from \cite[Lemma 4.1.1$(ii)$]{KL} that $S$ contains a subgroup $H=\Omega_6^+(2)$ acting naturally on a nondegenerate $6$-subspace $U$ of plus type and fixing $U^\perp$ pointwise. Since $Z\cap H=1$ it follows that $G$ has a subgroup isomorphic to $H=\Omega_6^+(2)\cong A_8$, and we  conclude that $\mathcal{E}(G)$ is not $C_4$-free.

\medskip\noindent
\emph{Exceptional groups of Lie type. \quad }
The exceptional groups of Lie rank 1 are $G= \Sz(q)$ and $G={\rm Ree}(q)$. By Lemma \ref{l:suzuki}, $\mathcal{E}(\Sz(q))$ is a cograph for all $q>2$, and we note that these groups are in $\mathcal{L}_1$. On the other hand by Lemma \ref{l:Ree(q)_cograph},  $\mathcal{E}({\rm Ree}(q))$ is not a cograph, but we have not proved whether or not this graph is $C_4$-free, so we add the Ree groups to the list in \eqref{e:list2}.  In considering the other exceptional Lie type groups below, we use information about their maximal subgroups from \cite{LSS}.

\medskip\noindent
\emph{Case: exceptional groups of Lie rank $2$.\quad }  These are the groups ${}^3D_4(q)$, $G_2(q)$ and ${}^2F_4(q)'$. First let $G={}^3D_4(q)$ or $G=G_2(q)$. Then by \cite[Table 5.1]{LSS}, $G$ has a subgroup $\SL_2(q)\times \SL_2(q)$ or $\SL_2(q)\circ \SL_2(q)$, for $q$ even or odd, respectively, and so by Corollary~\ref{c:AB_application}, $\mathcal{E}(G)$ is not $C_4$-free. So suppose that $G={}^2F_4(q)'$. Then $q$ is even. If $q>2$ then  by \cite[Table 5.1]{LSS}, $G$ has a subgroup $H=\Sp_4(q)\cong \PSp_4(q)$, while if $q=2$ then $G$ has a subgroup $H=\PSL_3(3)$,  see \cite[p. 74]{Atlas}. In either case we showed above that $\mathcal{E}(H)$ is not $C_4$-free, so $\mathcal{E}(G)$ is not $C_4$-free. 


       \medskip\noindent
    \emph{Case: exceptional groups of Lie rank $> 2$.\quad }  These are the groups $F_4(q)$, $E_6(q)$, ${}^2E_6(q)$, $E_7(q)$ and $E_8(q)$.   We show that, for each of these groups $G$, $\mathcal{E}(G)$ is not $C_4$-free by finding a subgroup $H<G$ such that $\mathcal{E}(H)$ is not $C_4$-free. If $G=F_4(q)$ then by \cite[Table 5.1]{LSS}, $G$ has a subgroup $H={}^3D_4(q)$ and we showed in the previous case that $\mathcal{E}(H)$ is not $C_4$-free. Next if $G=E_6(q)$ or ${}^2E_6(q)$, then by  \cite[Table 9 and 10]{Craven2023}, $G$ has a subgroup $H=F_4(q)$ and we just showed that $\mathcal{E}(H)$ is not $C_4$-free. Finally if $G=E_7(q)$ or $E_8(q)$ then,  by \cite[Table 5.1]{LSS}, $G$ has a subgroup $H=E_6(q)$ or ${}^2E_6(q)$ and again we know that $\mathcal{E}(H)$ is not $C_4$-free.

\medskip \newpage
\noindent
\emph{Sporadic simple groups.} \quad 

  \medskip\noindent
\emph{Case: $G=M_{11}$.  \quad } We first prove that $\mathcal{E}(G)$ is  $C_4$-free, and then that  $\mathcal{E}(G)$  is not a cograph.  Assume first that $\mathcal{E}(G)$ contains an induced  
$C_4$. Then by  Proposition \ref{p:AB}, $G$ contains 
elements $a_1, a_2, b_1, b_2$ of prime power order such that $\gcd(|a_i|,|b_j|)=1$ for all $i,j\in[2]$, and the subgroups $A=\seq{a_1,a_2}$ and $B=\seq{b_1,b_2}$ are both non-cyclic and  centralise each other. In particular none of the $a_i, b_j$ lies in $M(G)$ and so, by \cite[p. 18]{Atlas}, they do not have order $5, 8$ or $11$. Thus by \cite[p. 18]{Atlas}, the possible orders $|a_i|, |b_j|\in \{2,3,4\}$, so without loss of generality $|a_i|=2$ or $4$, and $|b_j|=3$, and we note that $|a_ib_j|=3|a_i|$. However $G$ has no elements of order $12$, so  $|a_i|=2$.  GAP computations show  that the element $b_1$ has centraliser $C_G(b_1)= \langle b_1\rangle\times S$ where $S\cong S_3$, and  in fact all elements of order 3 have centraliser of this form since $G$ contains a unique conjugacy class of such elements. Now all elements of order $2$ in $C_G(b_1)$ lie in $S$, and we conclude therefore that $A\subseteq S$. In fact, as $A$ is 
non-cyclic and generated by two involutions, we have $A=\langle a_1,a_2\rangle=S$. Further $B$ centralises $A$, and hence also $B$ centralises the derived subgroup $A'\cong C_3$, that is, $B\leq C_G(A')\cong A'\times S_3$. Since all elements of order $3$ in $C_G(A')$ lie in its unique Sylow $3$-subgroup we  have $B=C_3^2$, which contains $A'$. This is a contradiction since each $a_i$ inverts $A'$, but also centralises $B$. Hence $\mathcal{E}(G)$ is $C_4$-free.

Now we show that $\mathcal{E}(G)$ is not a cograph using Proposition~\ref{p:W}. As mentioned above, every element $a$ of order $3$ has centraliser $C_G(a)\cong \langle a\rangle\times S_3$. Thus there are three distinct cyclic subgroups of $C_G(a)$ of order $6$ that pairwise intersect in $\seq{a}\cong C_3$. Further, by \cite[p. 18]{Atlas}, $G$ contains no elements of order a proper multiple of $6$, and so these three subgroups are maximal cyclic subgroups of $G$. Let $A$ and $B$ be two of them.  Also, by \cite{Atlas}, $G$ has a unique conjugacy class of involutions and a unique conjugacy class of maximal cyclic subgroups of order $8$; one of the latter, say  $C$, is such that $A\cap C$ has order $2$.  It follows that $A\cap B\not\leq A\cap C$ and 
$A\cap C\not\leq A\cap B$, and hence,  by Proposition \ref{p:W}, $\mathcal{E}(G)$ is not a cograph. 


 \medskip\noindent
 {\it Case: $G=M_{22}$.\quad  } 
 Computations using GAP show that $G$ has a subgroup isomorphic to $A_4\times C_2^2$ , contained in the maximal subgroup $2^4:S_5$. It now follows from Corollary  \ref{c:AB_application}$(b)$ that $\mathcal{E}(G)$ is not $C_4$-free, and is not a cograph. 


\medskip\noindent
{\it Case: $G=J_1$. \quad}
We first prove that $\mathcal{E}(G)$ is not a cograph. 
Using \cite[p. 36]{Atlas}, we see that $G$ has maximal cyclic subgroups of orders: $15, 10$ and $6$, and moreover $G$ has unique conjugacy classes of cyclic subgroups  of orders $3$ and $5$. For an element $b$ of order $5$, we have $C_G(b)=\langle b\rangle\times S_3$, so $G$ has maximal cyclic subgroups $A, B$ of orders $15, 10$ respectively,  such that  $A\cap B=\langle b\rangle =C_5$. If $a$ is an element of $A$ of order $3$, then $C_G(a)= C_3\times D_{10}$ and hence $a$ lies also in a maximal cyclic subgroup $C$ of order $6$ and we have $A\cap C=\langle a\rangle \not\leq A\cap B$. Also $A\cap B\not\leq A\cap C$, and hence  by Proposition \ref{p:W}, $\mathcal{E}(G)$ is not a cograph.

By Corollary \ref{c:AB_maximal}, to show that $\mathcal{E}(G)$  is $C_4$-free, it is sufficient to prove that  $\mathcal{E}(M)$ is $C_4$-free   for each maximal subgroup $M$ of $G$. 
According to \cite{Atlas} and \cite{Janko}, the isomorphism classes of maximal subgroups of $G$ are:
\begin{itemize}
    \item $\PSL_2(11)$
    \item $2^3:7:3$ 
    \item $2\times A_5$ 
    \item $S_3\times D_{10}$ 
    \item Frobenius groups of type either $ 19:6$, or  $11:10$, or $7:6$.
\end{itemize}
By Lemma~\ref{l:PSL2}, $\mathcal{E}(\PSL_2(11))$  is a cograph and hence is $C_4$-free, by Theorem A. Also, 
since the Frobenius groups $M$ above have a nontrivial partition by cyclic subgroups, it follows from Proposition~\ref{t:char-block} and Theorem \ref{p:partition} that $\mathcal{E}(M)$ is a cograph and so is $C_4$-free. 
For the three remaining groups $M$ we use detailed information from Janko's paper \cite{Janko}, and we show 
that $M$ does not contain a subgroup $H=AB$ as in 
Proposition \ref{p:AB}$(b)$. To do this we assume that $H=AB$ is such a subgroup of $M$ and find a contradiction. Thus $A=\seq{a_1,a_2}$ and $B=\seq{b_1,b_2}$ are both non-cyclic and  centralise each other, and the elements $a_1, a_2, b_1, b_2$ have  prime power order such that $\gcd(|a_i|,|b_j|)=1$ for all $i,j\in[2]$. In particular  $\seq{a_ib_j}=\seq{a_i}\times \seq{b_j}$, so  none of the $a_i, b_j$ lies in $M(G)$.

First we consider $M=2^3:7:3$. Here the unique Syow $2$-subgroup $L=C_2^3$ of $M$ is a Sylow $2$-subgroup of $G$ and $M=N_G(L)$. 
Since, by \cite[p. 36]{Atlas}, an element of order $7$ generates a maximal cyclic subgroup of $G$, none of the $a_i, b_j$ has order $7$ and hence their orders are $2$ or $3$. Thus we may assume that each $|a_i|=2$ and each $|b_j|=3$. Thus $A=\seq{a_1,a_2}\leq L=C_2^3$ which implies that $A=C_2^2$, and $B=\seq{b_1,b_2}\leq C_M(A)$.   Now $C_M(A)\leq C_M(a_1)$, and by \cite[p.36]{Atlas}, $C_M(a_1)= \seq{a_1}\times K$ with $K=A_5$. Thus $B\leq K$ and since $B$ is not cyclic, $B\cong A_4$ or $A_5$. Also $A\leq C_G(a_1)$ and so $A\cap K=C_2$, and this group should be centralised by $B$, which is a contradiction.   

Now let $M=2\times A_5$. Since $M$ contains elements of order $|a_ib_j|=|a_i|\cdot |b_j|$, and $M$ has no elements of 
order $15$, it follows without loss of generality that $|a_i|=2$ for each $i\in[2]$,  and $|b_i|\in\{3,5\}$ for each $i\in [2]$.   
Let $r=|b_i|$. Then, since 
elements of order $r$  in $A_5$ are self-centralising, we have  $C_M(b_i)=C_{2r}$  and this subgroup contains a unique element of order $2$, and hence cannot contain $A$. Thus we have a contradiction. 

Finally consider 
$M=S_3\times D_{10}$. Then, as $|a_i|, |b_j|$ are prime powers, these orders lie in $\{2,3,5\}$. We show that indeed $|a_i|, |b_j|$ must lie in $\{3,5\}$.  Assume, by contradiction,  that one of them is $2$, say $|a_1|=2$. Then $|b_j|\in\{3,5\}$ for all $j\in [2]$. 
Since $M$ has a unique Sylow 
$3$-subgroup and a unique Sylow $5$-subgroup, and since $B=\seq{b_1,b_2}$ is not cyclic, 
this implies that $\{|b_1|,|b_2|\} = \{3,5\}$, but then we have 
$B = C_3\times C_5=C_{15}$, which is cyclic, and this is a contradiction. Thus $|a_i|, |b_j|\in\{3,5\}$ for all $i,j\in[2]$. This however implies,  for each $i,j\in [2]$, that $\{|a_i|,|b_j|\} = \{3,5\}$  since $\gcd(|a_i|,|b_j|)=1$, and hence $A,B$ are cyclic, which is a contradiction.  We conclude that $\mathcal{E}(J_1)$ is $C_4$-free.

  \medskip\noindent
\emph{All other sporadic groups. \quad } We show that for $G$ a sporadic simple group different from $M_{11}, J_1$, the graph $\mathcal{E}(G)$ is not $C_4$-free, by exhibiting a
subgroup $H$ for which $\mathcal{E}(H)$ is not $C_4$-free. Often there are several choices for $H$, and we give just one in Table~\ref{tab:sporad}. The column of Table~\ref{tab:sporad} headed `Comments' refers to where we deduce that $\mathcal{E}(H)$ is not $C_4$-free.  
The reference for the different choices of the subgroup $H$ is \cite{Atlas}. For example, we have  ${}^3D_4(2) < Th < B$, and $C_3^2\times C_2^2< 3^2:4\times A_6 < ON$.
\qed

  \begin{table}
   \caption{ Subgroups $H$ of sporadic groups $G$ for Theorem~B}
   \vskip 2mm
 \begin{tabular}{lll}
   \hline
 $H$  &  $G$ &  Comments\\
     \hline
  $A_4\times C_2^2$  &  $J_1$ & Corollary~\ref{c:AB_application}  \\
  $A_4\times S_3$  &  $M_{12}, J_2, Suz, He$ & Corollary~\ref{c:AB_application}  \\
 $A_8$  &  $M_{23}, M_{24}, HS, HN, Co_{1}, Co_{2}, Co_{3}, $ &   \\
        &  $Fi_{22}, Fi_{23}, Fi_{24}', B, J_{4}, Ru$ & Alternating case  \\
$G_2(5)$  &  $Ly$ &  Exceptional rank $2$ case  \\
 $C_3^2\times C_2^2$  &  $ON$  &   Corollary~\ref{c:AB_application}  \\
 $\PSU_4(3)$  &  $McL$ & Classical case $\PSU$ \\
 ${}^3D_4(2)$  &  $Th, M, B$  &  Exceptional rank $2$ case   \\
 \hline
 \end{tabular}
 \label{tab:sporad}
 \end{table}


\subsection{Some comments on power graphs}

We close this section with some comparisons with the power graph.
The finite nonabelian  simple groups for which the power graph is a cograph were determined in \cite[Theorem 1.3]{CA}, and consist of one individual group  $\PSL_3(4)$ together with the groups $\PSL_2(q)$ and $\Sz(q)$ with the prime power $q$ satisfying certain number theoretic conditions.  Thus, by Theorem B, the set of finite nonabelian simple groups $G$ such that $\mathcal{P}(G)$  is a cograph is a proper subset of the set of such groups $G$ for which $\mathcal{E}(G)$  is a cograph. At present it is not known whether there are infinitely many values of $q$ satisfying the conditions in \cite[Theorem 1.3]{CA} and hence whether there are infinitely many finite nonabelian simple groups  $G$ such that $\mathcal{P}(G)$  is a cograph \cite[Problem 1.4]{CA}. 

On the other hand Theorem B gives two infinite families of finite nonabelian  simple groups  $G$ such that $\mathcal{E}(G)$  is a cograph, and as we have discussed, the implication `$\mathcal{P}(G)$  a cograph $\Longrightarrow$ $\mathcal{E}(G)$ a cograph'  holds for finite simple groups. While there is no apparent reason for this implication to hold in general, other results in \cite{manna} and \cite{CA} seem to indicate that the following conjecture just might be true.
For instance, \cite[Theorem 12]{manna} states that the nilpotent groups whose power graph is a cograph are given only by $p$-groups or cylic groups $C_{pq}$ with $p, q$ distinct primes. Clearly this class is properly contained in the class of nilpotent groups whose enhanced power graph is a cograph (Theorem \ref{t:Ma24_Thm2}).
Another interesting example is given  by \cite[Theorem 4.1]{CA} which states that a  group $G$ in which the maximal cyclic subgroups intersect trivially has power graph being a cograph if and only if the orders of the maximal
cyclic subgroups are  prime powers or the product of two distinct primes. By Theorem \ref{p:partition} and  having in mind Remark \ref{r:partn}, we instead know that any group $G$ in which the maximal cyclic subgroups intersect trivially has enhanced power graph being a cograph. 
 
\begin{conjecture} \label{prob2} For every finite  group $G$, if the  power graph $\mathcal{P}(G)$  is a cograph, then also the enhanced power graph $\mathcal{E}(G)$  is a cograph.
\end{conjecture}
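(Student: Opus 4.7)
The plan is to prove the contrapositive: assume that $\mathcal{E}(G)$ is not a cograph and produce an induced $P_4$ in the power graph $\mathcal{P}(G)$. The starting point is Proposition~\ref{p:W}, which supplies three pairwise distinct maximal cyclic subgroups $A, B, C$ of $G$ such that neither $A\cap B\leq A\cap C$ nor $A\cap C\leq A\cap B$. I would fix generators $b, c$ of $B, C$, and elements $u\in(A\cap B)\setminus C$ and $v\in(A\cap C)\setminus B$; the natural candidate for an induced $P_4$ in $\mathcal{P}(G)$ is $(b,u,v,c)$.

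For this candidate, the outer edges $\{b,u\}$ and $\{v,c\}$ are automatic, since $u\in B=\langle b\rangle$ and $v\in C=\langle c\rangle$. The three required non-adjacencies $\{b,c\},\{b,v\},\{u,c\}$ are in fact non-edges even of $\mathcal{E}(G)$, by the maximality and distinctness of $A,B,C$: for example, if $\{b,v\}$ were an edge of $\mathcal{E}(G)$ then $\langle b,v\rangle$ would be cyclic and, by the maximality of $B$, would force $v\in B$, contradicting our choice. So five of the six possible adjacencies on $\{b,u,v,c\}$ automatically have the required status, and the entire strategy hinges on a single question: whether $\{u,v\}$ is an edge of $\mathcal{P}(G)$.

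This is the main obstacle, and it is a genuine one. Although $\langle u,v\rangle\leq A$ forces $\{u,v\}\in E(\mathcal{E}(G))$, adjacency in $\mathcal{P}(G)$ requires $\langle u\rangle\leq\langle v\rangle$ or $\langle v\rangle\leq\langle u\rangle$, which in the cyclic group $A$ amounts to divisibility between $|u|$ and $|v|$. By our hypothesis, $|A\cap B|$ and $|A\cap C|$ are incomparable divisors of $|A|$, so the most natural choices (generators of $A\cap B$ and $A\cap C$) fail this test; passing to prime-power refinements in these intersections typically makes the orders coprime, which is no better. The real work of the conjecture is therefore to replace $u,v,b,$ or $c$ by more flexible selections so that the middle pair becomes comparable while the five favourable conditions are preserved, perhaps by choosing $b$ and $c$ to be non-generating elements of $B$ and $C$ whose orders mesh with prime-power components of $A\cap B$ and $A\cap C$.

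When no such arrangement exists I would switch to a classification-style attack: verify the conjecture directly in the classes where it is already implicit, namely nilpotent groups (combining Theorem~\ref{t:Ma24_Thm2} with \cite[Theorem 12]{manna}) and finite nonabelian simple groups (combining Theorem B with \cite[Theorem 1.3]{CA}), and then reduce the general case to these via quotients, using Corollary~\ref{c:E(G/Cyc)} on the $\mathcal{E}$-side. A serious caveat here is that no analogue of Proposition~\ref{p:E(G/Cyc)} holds for the power graph, since the ``is a power of'' relation does not descend to quotients as cleanly as cyclic generation does; this lack of a robust quotient tool for $\mathcal{P}(G)$ appears to be the deepest reason the conjecture has resisted proof.
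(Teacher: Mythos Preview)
This statement is labelled a \emph{conjecture} in the paper, not a theorem; the paper offers no proof, only supporting evidence from special classes (nilpotent groups, finite simple groups, groups with trivially intersecting maximal cyclic subgroups) and a remark that further examples in \cite{CA} are consistent with it. So there is no ``paper's own proof'' to compare against.

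Your proposal is not a proof either, and you are clearly aware of this: you correctly isolate the single obstruction in the natural $(b,u,v,c)$ approach, namely that the middle edge $\{u,v\}$ must be a power-graph edge, which forces a divisibility relation between $|u|$ and $|v|$ that the incomparability hypothesis on $A\cap B$ and $A\cap C$ actively works against. Your observation that the five other adjacency/non-adjacency conditions come for free is accurate, as is your diagnosis that the quotient machinery (Proposition~\ref{p:E(G/Cyc)} and Corollary~\ref{c:E(G/Cyc)}) has no usable analogue on the power-graph side. What you have written is a sound outline of why the conjecture is nontrivial and where a genuine new idea would be needed; it matches the paper's own assessment that the question is open.
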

We emphasise that, other examples confirming the conjecture are given by \cite[Theorems 1.2, 4.2, 4.3, 4.4]{CA}.

\bigskip

\centerline{\bf Acknowledgement}

\medskip
We wish to thank José C\'aceres and Michel Habib for several useful discussions on the classes of graphs considered in the paper.


		\vspace{10mm}
\noindent {\Large{\bf Conflict of interest}}
\vspace{2mm}

\noindent Declarations of conflict of interest in the manuscript: none.

\end{document}